\renewcommand{\mathbf}{\mathbold}
\renewcommand{\@secnumfont}{\bfseries}
\newcolumntype{Y}{>{\RaggedRight\arraybackslash}X} 
\def\l@subsection{\@tocline{2}{0pt}{2.5pc}{5pc}{}}
\numberwithin{equation}{section}
\newcommand{\N}{\mathbb{N}}
\newcommand{\R}{\mathbb{R}}
\newcommand{\F}{\mathbb{F}}
\newcommand{\E}{\mathbb{E}}
\newcommand{\C}{\mathbb{C}}
\newcommand{\cV}{\mathcal{V}}
\newcommand{\cH}{\mathcal{H}}
\newcommand{\cA}{\mathcal{A}}
\newcommand{\cW}{\mathcal{W}}
\newcommand{\cX}{\mathcal{X}}
\newcommand{\cM}{\mathcal{M}}
\newcommand{\cP}{\mathcal{P}}
\newcommand{\cF}{\mathcal{F}}
\newcommand{\cK}{\mathcal{K}}
\newcommand{\cE}{\mathcal{E}}
\newcommand{\fg}{\mathfrak{g}}
\newcommand{\fh}{\mathfrak{h}}
\newcommand{\fgl}{\mathfrak{gl}}
\newcommand{\fu}{\mathfrak{u}}
\newcommand{\bx}{\mathbf{x}}
\newcommand{\by}{\mathbf{y}}
\newcommand{\bz}{\mathbf{z}}
\newcommand{\bomega}{{\boldsymbol{\omega}}}
\newcommand{\bX}{\mathbf{X}}
\newcommand{\bY}{\mathbf{Y}}
\newcommand{\bw}{\mathbf{w}}
\newcommand{\bZ}{\mathbf{Z}}
\newcommand{\bu}{\mathbf{u}}
\newcommand{\bv}{\mathbf{v}}
\newcommand{\bq}{\mathbf{q}}
\newcommand{\obx}{\overline{\bx}}
\newcommand{\oby}{\overline{\by}}
\newcommand{\wbX}{\widetilde{\bX}}
\newcommand{\wbY}{\widetilde{\bY}}
\newcommand{\wbx}{\widetilde{\bx}}
\newcommand{\wbq}{\widetilde{\bq}}
\newcommand{\obX}{\overline{\bX}}
\newcommand{\obY}{\overline{\bY}}
\newcommand{\obu}{\overline{\bu}}
\newcommand{\obv}{\overline{\bv}}
\newcommand{\obw}{\overline{\bw}}
\newcommand{\obz}{\overline{\bz}}
\newcommand{\wQ}{\widetilde{Q}}
\newcommand{\tbX}{\widetilde{\bX}}
\newcommand{\GL}{\operatorname{GL}}
\newcommand{\sD}{\mathsf{D}}
\newcommand{\bF}{\mathbf{F}}
\newcommand{\cmGL}{\ensuremath{\textbf{\upshape GL}}}
\newcommand{\cmg}{{\boldsymbol{\mathfrak{g}}}}
\newcommand{\cmgl}{{\boldsymbol{\mathfrak{gl}}}}
\newcommand{\cmG}{\textbf{G}}
\newcommand{\gt}{\vartriangleright}
\newcommand{\ot}{\otimes}
\newcommand{\wot}{\,\widetilde{\ot}\,}
\newcommand{\op}{\operatorname{op}}
\newcommand{\id}{\text{id}}
\newcommand{\rank}{\text{rank}}
\newcommand{\Lip}{\operatorname{Lip}}
\newcommand{\Mat}{\operatorname{Mat}}
\newcommand{\thinhom}{\operatorname{th}}
\newcommand{\andd}{\text{and}}
\newcommand{\im}{\text{i}}
\newcommand{\td}{\widetilde{d}}
\newcommand{\ls}{\lesssim}
\newcommand{\trans}{\operatorname{trans}}
\newcommand{\Aut}{\text{Aut}}
\newcommand{\Der}{\text{Der}}
\newcommand{\V}{\R^d}
\newcommand{\pV}{\R^{d+2}}
\newcommand{\Lin}{\operatorname{L}}
\newcommand{\sE}{\mathsf{E}}
\newcommand{\bsE}{\ensuremath{ \textbf{\upshape\textsf{E}}}}
\newcommand{\tleq}{\operatorname{tl}} %
\newcommand{\homotopy}{{\boldsymbol{\eta}}}
\newcommand{\bighomotopy}{{\boldsymbol{\Xi}}}
\newcommand{\Fr}{\text{F}}
\newcommand{\cmb}{\delta}
\newcommand{\con}{\bomega}
\newcommand{\cona}{\alpha}
\newcommand{\conb}{\beta}
\newcommand{\conc}{\gamma}
\newcommand{\res}{\text{res}}
\newcommand*{\pmat}[1]{\begin{pmatrix}#1\end{pmatrix}}
\newcommand*{\vmat}[1]{\begin{vmatrix}#1\end{vmatrix}}
\newcommand*{\psmat}[1]{\begin{psmallmatrix}#1\end{psmallmatrix}}
\newcommand*{\pd}[2]{\frac{\partial #1}{\partial #2}}
\newcommand{\zor}{0} %
\newcommand{\bigcorner}{\scalebox{1.25}[1.25]{$\llcorner$}}
\newcommand{\raisecorner}[2]{\raisebox{0.25pt}{$#1\bigcorner$}}
\newcommand{\zax}{\mathpalette\raisecorner\relax} %
\newcommand{\zbdy}{\square} %
\newcommand{\dgcm}{\ensuremath{ \textbf{\upshape\textsf{D}}}}
\newcommand{\bdy}{\partial}
\newcommand{\concat}{\star}
\newcommand{\comp}{\star}
\newcommand{\dgcomp}{\odot}
\newcommand{\thinpath}{\mathsf{T}_1}
\newcommand{\thinsurface}{\mathsf{T}_2}
\newcommand{\wthinsurface}{\widetilde{\mathsf{T}}_2}
\newcommand{\thindg}{\ensuremath{\textbf{\upshape\textsf{T}}}}
\newcommand{\llb}{\llbracket}
\newcommand{\rrb}{\rrbracket}
\newtheorem{counter}{Counter}[section]
\newtheorem{theorem}[counter]{Theorem}
\newtheorem{lemma}[counter]{Lemma}
\newtheorem{proposition}[counter]{Proposition}
\newtheorem{corollary}[counter]{Corollary}
\theoremstyle{definition}
\newtheorem{definition}[counter]{Definition}
\newtheorem{example}[counter]{Example}
\newtheorem{remark}[counter]{Remark}
\crefname{equation}{}{}
\title{Random Surfaces and Higher Algebra}
\renewcommand{\email}[2][]{%
  \ifx\emails\@empty\relax\else{\g@addto@macro\emails{,\space}}\fi%
  \@ifnotempty{#1}{\g@addto@macro\emails{\textrm{(#1)}\space}}%
  \g@addto@macro\emails{#2}%
}
\author{Darrick Lee$^{1}$}
\email{darrick.lee@ed.ac.uk}
\address[$^1$]{School of Mathematics and Maxwell Institute, University of Edinburgh, Edinburgh EH9 3FD, Scotland}
\author{Harald Oberhauser$^{2}$}
\address[$^2$]{Mathematical Institute, University of Oxford, Andrew Wiles Building, Radcliffe Observatory Quarter,
Woodstock Rd, Oxford OX2 6GG}
\email{oberhauser@maths.ox.ac.uk}
\begin{document}

\begin{abstract}

We introduce a characteristic function for laws of random surfaces $\bX: [0,s] \times [0,t] \to \R^d$, in the spirit of expected path developments for one-dimensional stochastic processes into matrix groups.
A key property is that path development is structure preserving: path concatenation becomes matrix multiplication. 
The main challenge is to account for two distinct concatenation operations for surfaces: horizontal and vertical. 
To address this, we use the notion of surface holonomy from higher geometry to define surface developments, and study this in a stochastic context. 
We generalize surface developments to the Young setting of $\rho$-H\"older surfaces, where $\rho > \frac12$, show that such developments characterize parametrized surfaces.
Our main result shows that the resulting expected surface development provides a computable and structured description of laws of random surfaces and leads to a natural metric on the space of probability measures on surfaces.

\end{abstract}
\maketitle

{\footnotesize \tableofcontents} %

\section{Introduction}
\addtocontents{toc}{\protect\setcounter{tocdepth}{1}}

The main contribution of this article is to develop a structured description of probability measures on the function space
\begin{equation}\label{eq:fct space}
\bigcup_{s>0, t>0}C([0,s]\times [0,t],\mathbb{R}^d).
\end{equation}
In fact, we first study this question on the space of probability measures on the quotient space $\mathcal{X} \coloneqq C([0,1]^2,\mathbb{R}^d) /\sim$; here, $\sim$ denotes an equivalence relation that roughly identifies surfaces up to reparametrization and folding~\cite{bischoff_thin_2025}.
The above case of probability measures on the space of functions \eqref{eq:fct space} follows as a corollary.
Moreover, understanding the case of equivalence classes is firstly, natural from our geometric approach and secondly also potentially important for applications; e.g.~an image stretched or shrunk contains the same information. 
Further, this is analogous to how one proceeds with the expected path development.

\subsection*{A Geometric Approach.} 
Our approach to construct an expected surface development proceeds in two steps.
The first step is the construction of a parametrized family of maps 
\[
\bF^{\alpha, \gamma} = (F^{\alpha}, H^{\alpha, \gamma}) :\cX \to \dgcm
\]
into an object $\dgcm$ called a double group.
Here, we primarily work with $H^{\alpha, \gamma}$ which encodes information about the entire surface, while $F^\alpha$ encodes information about boundary of the surface, which is necessary to define the compositional structure.
Double groups generalize groups to encode two independent compositions.
Surfaces compose horizontally and vertically, and any representation must preserve both operations.
Because we care about computability, we use a special double group that can be realized by matrices.
To do all this, we revisit results from higher geometry and higher algebra. 
The second step is to combine the map $\bF^{\alpha, \gamma}$ with probabilistic estimates. 
The main result will be that the map 
 \begin{align}
        \mu \mapsto \left(\E_{\bX \sim \mu} \left[\cH^{\alpha,\gamma,\ell}(\bX) \right]\right)_{\alpha,\gamma,\ell} \quad \text{where} \quad \cH^{\alpha, \gamma,\ell}(\bX)\coloneqq \exp(i \langle \ell, H^{\alpha, \gamma}(\bX) \rangle)
\end{align}
characterizes a rich class of probability measures on $\cX$. Here we take $\cH^{\alpha, \gamma,\ell}(\bX)$ since tensor products of higher representations are not well-behaved (see~\Cref{apxsec:2_tensor_product}); therefore we must pass to scalar-valued observables via exponentiation.
Furthermore, this leads to a natural metric on the space of probability measures on surfaces.
Below we give a few details and emphasize the analogy with the well-studied case of the expected path development.

\subsection*{Characteristic Functions.}
Our aim is to construct a characteristic function (or Fourier transform) for surface-valued random variables.
Let us recall, some well-known examples of characteristic functions, going from linear spaces to various non-linear spaces.

\begin{description}
     \item[Euclidean]
 Suppose $\mu$ is a probability measure on $\cX =\R^d$. In this case, the Fourier transform is given by
\begin{align}
    \mu \mapsto (\E_{x \sim \mu} [F^\alpha(x)])_{\alpha \in \R^d}, \quad F^\alpha(x) \coloneqq \exp(i \langle \alpha, x \rangle).
\end{align}
Here, the $F^\alpha$ is an irreducible unitary representation of $\R^d$ into $U(1)$, where $\alpha \in \R^d$ parametrizes all such representations. \medskip

\item[Group] 
In the generalized setting where $\cX = G$ is a compact Hausdorff topological group, classical results from noncommutative harmonic analysis tells us that the analogous construction holds. In particular, the Fourier transform of a $G$-valued probability measure $\mu$ is given by 
\begin{align}
    \mu \mapsto (\E_{x \sim \mu} [F^\alpha(x)])_{\alpha \in M},
\end{align}
where $M$ denotes the set of isomorphisms classes of irreducible unitary representations $F^\alpha: G \to U^\alpha$, where $U^\alpha$ is the unitary group of a finite dimensional Hilbert space $V^\alpha$.\medskip

\item[Paths] 
Now we let $\cX$ be a subset of $C([0,1],\R^d)$ that is sufficiently regular such that the following differential equation 
\begin{align}\label{eq:CDEsimple}
    \frac{dF^\alpha_t(\bx)}{dt} = F_t^\alpha(\bx) \cdot \alpha \left( \frac{d\bx_t}{dt}\right), \quad F^\alpha_0(\bx) = I,
\end{align}
is well-posed when $\alpha \in L(\R^d, \fu(n))$. This is parallel transport on a trivial principal $U(n)$ bundle with a translation-invariant connection $\alpha$.
We then consider
\begin{equation}\label{eq:exp sig}
\mu \mapsto \left(\mathbb{E}[F_1^\alpha(\bX)] \right)_{\alpha \in M}
\end{equation}
as a characteristic function for probability measures on pathspace
where 
\begin{align}
    M \coloneqq \{ \alpha \in L(\R^d, \fu^n) \, : \, n \in \N \}.
\end{align}
This construction was rediscovered by different communities at different times in different contexts and aspects of it run under various names such "non-commutative Fourier transform"~\cite{kapranov_noncommutative_2009} or "expected path signature"~\cite{baudoin_introduction_2004}. \medskip
\end{description}
The unifying theme of all the above, is that the domain is a group and if we have a representation of the group given by $F^\alpha$ then we expect the map
\begin{align} \label{eq:general_characteristic_function}
    \mu \mapsto (\E_{x \sim \mu} [F^\alpha(x)])_{\alpha \in M},
\end{align}
to be injective when $M$ parametrizes a large enough set of such representations. 
The goal of this article is to carry out a similar construction for probability measures on surfaces. There are two crucial properties which underlies the characteristic function.
\begin{enumerate}[(i)]
    \item \textbf{Algebraic Structure.} The key point which underlies many of the desirable properties of a characteristic function is the fact that $F^\alpha$ is a representation: it respects the algebraic structure of the underlying space.
    When $\cX = \R^d, G$, these are simply group homomorphisms. 
    For paths, we consider the algebraic structure of concatenation, and note that any path gives rise to another path by running it backwards, that is a time-reversal. 
    This becomes a group after considering thin homotopy equivalence $\sim_{\thinhom}$ (in stochastic analysis, this is called tree-like equivalence); informally, thin-homotopy ignores time-changes and retracings.
    One can verify that \eqref{eq:CDEsimple}, $F^\alpha$ respects these operations: path concatenation and time reversal turn into group multiplication and inverses in the codomain under $F^\alpha$.
    
    \item \textbf{Point Separation.} To make the map \eqref{eq:general_characteristic_function} injective, the collection of representations $(F^\alpha)_{\alpha \in M}$ must separate points: for $x\neq y$ there exists an $\alpha \in M$ such that $F^\alpha(x)\neq F^\alpha(y)$. Otherwise we would not even be able to distinguish Dirac measures on $x$ and $y$.
    This point separation property holds in the case of Euclidean space and groups, but it is not directly true for the path case. 
    In particular, the construction \eqref{eq:CDEsimple} is invariant under reparametrizations; thus one cannot 
    distinguish paths that differ by a time change, $\bx$ and $\by_t =\bx_{\phi(t)}$.
    Thus, $F^\alpha$ can separate thin homotopy classes of paths~\cite{chevyrev_characteristic_2016}. By appending the time-coordinate to paths\footnote{That is, if $x \neq y$ then $t \mapsto (t,x(x))$ and $t \mapsto (t, y(t))$ are not thin-homotopy equivalent.}, we can separate points on the entire path space. 
\end{enumerate}

\subsection*{Contribution.}
The notion of surface holonomy from higher geometry~\cite{baez_higher_2004} provides us with a candidate $F^\alpha$ for the representation of surfaces, which respects their higher-dimensional compositional structure. %
As our focus is on computable representations, we restrict our attention to a class of surface holonomy valued in a higher analogue of the classical matrix groups, such that they can be determined by standard matrix operations. 
Our main contributions establish analytic and probabilistic properties of surface holonomy to develop a concrete, computable tool for studying random surfaces in stochastic analysis and applications. In particular, we consider the following properties, and emphasize that these were only established for paths in recent years.
\begin{enumerate}
    \item \textbf{Characteristic Functions.} We show that surface holonomy can be used to build a characteristic function for random surfaces. However, due to the algebraic structure of higher tensor products, we must modify the heuristic in~\eqref{eq:general_characteristic_function} and consider \emph{exponentials} of representations. We emphasize that unlike the case of paths, there are already many examples of smooth random surfaces which are widely studied~\cite{adler_random_2009}.
    \item \textbf{Metrics.} In the case of paths, these characteristic functions have been used to define computable metrics for laws of stochastic processes~\cite{lou_pcf-gan_2023}. We show that we can define analogous metrics for the laws of random surfaces.
    \item \textbf{Separation of Surfaces.} Recent work has shown that surface development can also separate surfaces up to thin homotopy in the piecewise linear setting~\cite{bischoff_thin_2025}. Here, we show that appending the parametrization allows us to separate surfaces.
    \item \textbf{Nonsmooth Surfaces and Continuity.} Prior work on surface holonomy has focused exclusively in the smooth setting. Here, we consider a generalization to $\rho$-H\"older surfaces in the Young regime $(\rho > \frac12)$ in order to handle irregular random surfaces such as fractional Brownian sheets (with Hurst parameter $h > \frac12$). We show that the above properties also hold in this Young regime. 
\end{enumerate}

Our main contributions can be summarized in the following informal theorem.

\begin{theorem} \label{thm:intro_main_contribution}
    Let $\rho > \frac12$ and let $\cP^\rho$ denote the space of probability measures valued in the $\rho$-H\"older surfaces $C^\rho([0,1]^2, \V)$. The \emph{surface development characteristic function (SDCF)}
    \begin{align}
        \left( \mu \mapsto \E_{\bX \sim \mu} \left[\cH^{\alpha, \gamma, \ell}(\obX) \right]\right)_{\alpha, \gamma, \ell}
    \end{align}
    separates probability measures in $\cP^\rho$. Furthermore, there exists a metric $d: \cP^\rho \times \cP^\rho \to \R$ defined by the SDCF which metrizes the weak topology on compact subsets $\cK \subset \cP^\rho$.
\end{theorem}
\subsection*{Outline.}
Section \ref{sec: path dev} presents the well-known case of stochastic processes that are indexed by a one-dimensional set. That is, we discuss the above mentioned path-development \eqref{eq:CDEsimple} and characteristic function \eqref{eq:exp sig}. 
Here, the domain and co-domain are a group so the  representation $\Phi^\alpha$ is a group morphism.
Section \ref{sec:surfaces} introduces the algebraic structure of surfaces as that of a double group. Motivated by the path-case, our aim is to construct a double group morphism to represent surfaces.
Section \ref{sec:matrix_crossed modules} constructs the co-domain of the sought double group morphism from the previous section as a crossed modules of matrices; ultimately this allows for computations that are easier to manipulate symbolically.
Section \ref{sec:surface_development} explicitly constructs a double group morphism from surfaces into a crossed module of matrices as a surface development along a 2-connection; this is the generalization of the path development \eqref{eq:CDEsimple} to surfaces and provides us with a representation to construct a characteristic function for random surfaces.
Section \ref{sec:Young} generalizes surface development of the previous section to non-smooth surfaces in the Young regime.
Section \ref{sec:parametrized} shows how to go from a representation of (thin-homotopy) equivalences classes of surfaces to a representation of parametrized surfaces.
Section \ref{sec:characteristicness} then combines the surface development of the previous sections with probabilistic estimates to define the characteristic function for surfaces and a natural metric for laws of random surfaces. \medskip

\subsection*{Related Work.}
Functorial representations of paths via~\Cref{eq:CDEsimple} have been deeply studied in many communities, often under a distinct focus and name. For instance, this has been studied in analysis as \emph{product integrals}~\cite{slavik_product_2007,masani_multiplicative_1947,schlesinger_parallelverschiebung_1928}, in physics as \emph{path-ordered exponentials}, in control theory as \emph{Chen-Fliess expansions}~\cite{fliess_fonctionnelles_1981}, in geometry as \emph{parallel transport} or \emph{holonomy}, and in rough paths as the \emph{path signature} or \emph{path development}. The main perspectives used in this article are the latter two.

The theory of rough paths~\cite{lyons_differential_1998,lyons_differential_2007,friz_multidimensional_2010,friz_course_2020} initiated the use of signatures in stochastic analysis to understand stochastic processes.
More recently, there has been interest in extending signatures to higher dimensional data.
In~\cite{chouk_rough_2014,chouk_skorohod_2015}, controlled rough paths are extended to the case of surfaces and~\cite{alberti_integration_2023,stepanov_towards_2021} develops an integration theory of rough differential forms. 
In~\cite{giusti_topological_2022}, the authors develop a topological approach to higher dimensional signatures in a continuous setting, and in the discrete setting~\cite{diehl_two-parameter_2022} follows an algebraic approach. 
In contrast to the generalization we present, these generalizations of the path signature do not respect the compositional structure of surfaces, that is, they are not functorial.

To develop a functorial representation of surfaces, we turn to the geometric perspective. 
The formalization of path holonomy in terms of a group homomorphism or functor is developed in~\cite{barrett_holonomy_1991-1,caetano_axiomatic_1994,schreiber_parallel_2009}. Perhaps the earliest study of a surface analogue of parallel transport leads back to the work of Schlesinger~\cite{schlesinger_parallelverschiebung_1928}
More recently, the mathematical physics community constructed higher parallel transport, which generalized the previous surface integrals of curvature to nontrivial 2-groups, to develop higher gauge theory~\cite{baez_higher_2004,baez_higher_2006,bartels_higher_2006,girelli_higher_2004,breen_differential_2005,martins_surface_2011,schreiber_smooth_2011}, and this is the approach we take. 

After the first version of this paper was posted, several related papers have also appeared. Based on~\cite{kapranov_membranes_2015}, a universal version of the surface signature was developed in~\cite{lee_surface_2024, chevyrev_multiplicative_2024-1}, which also holds for rough surfaces. Furthermore~\cite{bischoff_thin_2025} show that the surface signature characterizes surfaces up to thin homotopy for piecewise linear surfaces.

\medskip

\textbf{Terminology.} We use the terms \emph{path development} and \emph{surface development} to refer to path/surface holonomy with respect to translation-invariant connections, in order to reflect the common terminology in stochastic analysis. \medskip

{\small
\textbf{Acknowledgements.}
The authors would like to thank the organizers, Kurusch Ebrahimi-Fard and Fabian Harang, of the \emph{Structural Aspects of Signatures and Rough Paths} workshop, where this work was first presented and many helpful discussions took place. We would like to thank Chad Giusti, Vidit Nanda, Francis Bischoff, Camilo Arias Abad, Jo\~ao Faria Martins for several helpful discussions. We would also like to thank Luca Bonengel for proofreading the first version of this article.
HO and DL were supported by the Hong Kong Innovation and Technology Commission (InnoHK
Project CIMDA). HO was supported by the EPSRC, [grant number EP/S026347/1] and the "Erlangen Programme" for AI [grant number EP/Y028872/1].}

\section{Path Development}\label{sec: path dev}
We recall the classic path development in a way that makes the surface development clearer. We let $C^\infty([0,1], \R^d)$ denote the space of smooth paths with \emph{sitting instants}\footnote{Note that a smooth path can always be smoothly reparametrized to have sitting instants. However, this is required so that the concatenation of two paths is still smooth.}: there exist some $\epsilon > 0 $ such that $\bx_0 = \bx_s$ and $\bx_1 = \bx_{1-s}$ for all $s \in [0,\epsilon] \label{pg:smooth_paths}$.
We consider \emph{based} paths \emph{up to translation},
\begin{align}
    C^\infty_0([0,1],\V) = C^\infty([0,1],\V)/\sim_{\trans}
\end{align}
where two maps $f,g : D \to \V \label{pg:based}$ are \emph{translation equivalent}, $f \sim_{\trans} g$, if $f = g + v$ for some fixed $v \in \V$. 
For brevity, we refer to elements of $C^\infty_0([0,1],\V)$ as paths.
\subsection*{Operations with Paths}
Paths carry two natural operations: concatenation and time-reversal. 
Formally, we define the concatenation operator
\begin{align} \label{eq:path_concatenation}
    (\bx \concat \by)_t \coloneqq \left\{ \
    \begin{array}{cl}
        \bx_{2t} & : t \in [0, \frac12] \\
        \bx_1 + \by_{2t-1} &: t \in (\frac12, 1]
    \end{array}
    \right.
    \quad \text{for} \quad  \bx, \by \in C^\infty_0([0,1], \R^d)
\end{align}
and time reversal as the inverse,
\begin{align} \label{eq:path_inverse}
    \bx_t^{-1} \coloneqq \bx_1 - \bx_{1-t} \quad \text{for} \quad \bx \in C^\infty_0([0,1], \R^d). 
\end{align}
\subsection*{Paths as a Group}
It is tempting to think of concatenation as a (non-commutative) group multiplication and time-reversal as an inverse. 
However, concatenation is not associative, and time reversal does not lead to an inverse on $C^\infty_0([0,1], \R^d)$ itself. 
But if we identify paths under reparametrizations and retracings, then we get the desired group structure.
\begin{figure}[!h]
    \includegraphics[width=\linewidth]{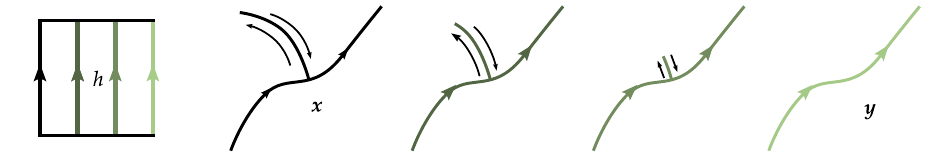}
\end{figure}

\begin{definition} \label{def:path_thinhom}
    Let $\bx, \by \in C^\infty_0([0,1], \R^d)$ such that $\bx_1 = \by_1$. We say that $\bx$ and $\by$ are \emph{thin homotopy equivalent}, denoted $\bx \sim_{\thinhom} \by$ if there exists a smooth endpoint-preserving thin homotopy $\homotopy: [0,1]^2 \to \R^d$ between $\bx$ and $\by$, where
    \begin{itemize}
        \item \textbf{(endpoint preserving)} $\homotopy_{s,0} = 0$ and $\homotopy_{s,1} = \bx_1$;
        \item \textbf{(homotopy condition)} $\homotopy_{0,t} = \bx_t$ and $\homotopy_{1,t} = \by_t$; and
        \item \textbf{(thinness condition)} $\rank(d\homotopy) \leq 1$, where $d\homotopy$ is the differential of $\homotopy$.
    \end{itemize}
\end{definition}

Note that thin homotopy equivalence is the same as tree-like equivalence for smooth paths~\cite{tlas_holonomic_2016,meneses_thin_2021}. 
The space of thin homotopy (and translation) equivalence classes of paths
\begin{align} \label{eq:thin_paths}
    \thinpath(\V) \coloneqq C^\infty_0([0,1], \V)/\sim_{\thinhom}
\end{align}
is a group under concatenation~\Cref{eq:path_concatenation} and time reversal for the inverse~\Cref{eq:path_inverse}.

\subsection*{Path Development}
Next, we define path development as parallel transport with respect to a translation-invariant connection.
\begin{definition}
    Let $G$ be a Lie group with Lie algebra $\fg$. A \emph{(translation-invariant) connection} is a linear map $\cona \in \Lin(\V, \fg)$. For a path $\bx \in C^\infty_0([0,1], \V)$, we define the \emph{path development of $\bx$ with respect to $\alpha$} as the solution at $t=1$ of the differential equation for $F^\alpha_t(\bx): [0,1] \to G \label{pg:path_development}$, defined by
    \begin{align}
        \frac{dF^\alpha_t(\bx)}{dt} = F^\alpha_t(\bx) \cdot \alpha \left( \frac{d\bx_t}{dt}\right), \quad F^\alpha_0(\bx) = e,
    \end{align}
    where $e \in G$ is the identity. We define $F^\alpha(\bx) = F^\alpha_1(\bx)$. 
\end{definition}
In fact, path development is invariant under thin homotopy~\cite[Theorem 2]{caetano_axiomatic_1994}, and preserves composition; thus for any connection $\alpha$, we obtain a group homomorphism
\begin{align}
    F^\alpha : \thinpath(\V) \to G.
\end{align}
For analytic applications, we often take $G \subset \GL^n$ to be a matrix Lie group. In fact, such path developments into unitary groups is point-separating.
\begin{theorem}{\cite[Theorem 4.8]{chevyrev_characteristic_2016}}
    Let $\bx, \by \in \thinpath(\V)$ such that $\bx \neq \by$. Then, there exists some $n \geq 1$ and a connection $\alpha \in \Lin(\V, \fgl^n)$ such that $F^\alpha(\bx) \neq F^\alpha(\by)$.
\end{theorem}
This result immediately implies an analogous result for \emph{parametrized paths}, where $\obx_t = (t, \bx_t)$.

\begin{corollary} \label{cor:path_separation}
    Let $\bx, \by \in C^\infty_0([0,1], \V)$ such that $\bx \neq \by$. Then, there exists some $n \geq 1$ and a connection $\alpha \in \Lin(\R^{d+1}, \fgl^n)$ such that $F^\alpha(\obx) \neq F^\alpha(\oby)$.
\end{corollary}

\subsection*{Probabilistic Statements}
Following the introduction, we define a characteristic function for measures $\mu$ of unparametrized (thin homotopy classes of) paths, and $\nu$ of parametrized paths by
\begin{align} \label{eq:probabilistic_statements_char_function}
     \mu \mapsto \left(\E_{\bx \sim \mu} [F^\alpha(\bx)]\right)_{n \geq 0, \, \alpha \in \Lin(\R^d, \fgl^n)} \quad \andd \quad \nu \mapsto \left(\E_{\bx \sim \nu}[F^\alpha(\obx)]\right)_{n \geq 0, \,\alpha \in \Lin(\R^{d+1}, \fgl^n)}.
\end{align}
The fact that this separates measures was shown in~\cite{chevyrev_characteristic_2016, cuchiero_global_2023} for measures satisfying certain moment decay conditions\footnote{While these references state their results for the path signature, we give the formulation in terms of path developments by using the universal property of the path signature, see~\cite{chevyrev_characteristic_2016}.}. Another approach was studied in~\cite{chevyrev_signature_2022} which holds without decay conditions, but instead applying a normalization procedure. 
However,~\cite[Remark 5.5(iv)]{cuchiero_global_2023} notes that one can obtain characteristic functions on the entire path space \emph{without} either moment conditions or normalization by instead considering %
\begin{align} \label{eq:probabilistic_statements_reformulated}
     \E_{\bx \sim \mu} \Big[\exp(i \langle \ell, F^\alpha(\bx) \rangle)\Big]
\end{align}
where we view $\ell \in \Mat_{n,n}$ as linear functionals. 
Analytic expressions for such characteristic functions can be obtained for solutions to certain SDEs~\cite{cuchiero2023signature}. We aim to generalize this notion of a characteristic function for surfaces. \medskip

Furthermore, the characteristic functions of the form in~\Cref{eq:probabilistic_statements_char_function} have been used to define metrics for stochastic processes~\cite{lou_pcf-gan_2023}; used for generative time series modeling in machine learning. This metric is obtained by randomizing the selection of the connection. 
We can reformulate this in terms of~\Cref{eq:probabilistic_statements_reformulated} for the parametrized setting as follows.

\begin{theorem}{\cite{lou_pcf-gan_2023, cuchiero2023signature}}\label{thm:haometric}
    Let $\Xi^n$ be the Gaussian measure on $\Lin(\R^{d+1}, \fgl^n)$, $\Theta^n$ be the uniform measure on the unit ball in $\Mat_{n,n}$ and $\cP$ be Borel probability measures on $C^\infty_0([0,1], \V)$. Then $d: \cP^2 \to \R$ defined by
    \begin{align}
        d(\mu, \nu) = \sum_{n=1}^\infty \E_{\alpha \in \Xi^n} \E_{\ell \in \Theta^n} \left[ \left| \E_{\bx \sim \mu} \Big[\exp(i \langle \ell, F^\alpha(\obx) \rangle)\Big] - \E_{\by \sim \nu} \Big[\exp(i \langle \ell, F^\alpha(\oby) \rangle)\Big]\right|\right]
    \end{align}
    is a metric which metrizes the weak topology when restricted to a compact subset $\cK \subset \cP$.
\end{theorem}

One of our main contributions, Theorem \ref{thm:intro_main_contribution}, is to generalize the characteristic function \eqref{eq:probabilistic_statements_reformulated} and the metric in Theorem \ref{thm:haometric} from probability measures on paths to probability measures on surfaces.

\section{Algebraic Structure of Surfaces}\label{sec:surfaces}
In this section we recall the classic surface development; we follow the structure of the previous section on path development to emphasize the motivation and similarities. We denote $C^\infty([0,1]^2, \V) \label{pg:smooth_surfaces}$ to be smooth surfaces with \emph{sitting instants}: there exists some $\epsilon > 0$ such that 
\begin{align}
    \bX_{u,t} = \bX_{0,t}, \quad \bX_{1-u, t} = \bX_{1,t}, \quad \bX_{s,u} = \bX_{s,0}, \quad \bX_{s,1-u} = \bX_{s,1} \quad \text{for all} \quad u \in [0,\epsilon],\, s,t \in [0,1].
\end{align}
Furthermore, we let $C^\infty_0([0,1]^2,\V)$ denote translation equivalence classes of surfaces, represented by surfaces $\bX$ based at the origin $\bX_{0,0} = 0$.

\subsection*{Operations with Surfaces}
We equip $C^\infty_0([0,1]^2,\V)$ with \emph{partially defined} \emph{horizontal} and \emph{vertical} concatenation operators, 
\begin{align} \label{eq:surface_concatenation}
    (\bX \concat_h \bY)_{s,t} \coloneqq \left\{ \
    \begin{array}{cl}
        \bX_{2s,t} & : s \in [0, \frac12] \\
        \bX_{1,0} + \bY_{2s-1, t} &: s \in ( \frac12, 1]
    \end{array}\right.
    , \, 
    (\bX \concat_v \bY)_{s,t} \coloneqq \left\{ \
    \begin{array}{cl}
        \bX_{s, 2t} & : t \in [0,  \frac12] \\
        \bX_{0,1} + \bY_{s, 2t-1} &: t \in ( \frac12, 1].
    \end{array}\right.
\end{align}
These operations are only well defined if $\bX_{1,t} = \bX_{1,0} + \bY_{0,t}$ for all $t \in [0,1]$ ($\bX$ and $\bY$ are \emph{horizontally composable}), and if $\bX_{s,1} = \bX_{0,1} + \bY_{s,0}$ for all $s \in [0,1]$ ($\bX$ and $\bY$ are \emph{vertically composable}), respectively. 
Given $\bX\in C^\infty_0([0,1]^2, \V)$, the \emph{left, right, bottom and upper boundary maps} are
\begin{align} \label{eq:strict_boundaries}
    (\bdy_l \bX)_t \coloneqq \bX_{0,t}, \quad (\bdy_r \bX)_t \coloneqq \bX_{1,t}, \quad (\bdy_b \bX)_t \coloneqq \bX_{t,0}, \quad (\bdy_u \bX)_t \coloneqq \bX_{t,1}.
\end{align}
For any path $\bx \in C^\infty_0([0,1], V)$, we define the horizontal and vertical identities of $\bx$, denoted $1^h_{\bx}, 1^v_{\bx} \in C^\infty_0([0,1]^2,\V)$ respectively, by
\begin{align} \label{eq:strict_identites}
    (1^h_{\bx})_{s,t} \coloneqq \bx_t \quad \text{and} \quad (1^v_\bx)_{s,t} \coloneqq \bx_s.
\end{align}

For any smooth surface $\bX\in C^\infty_0([0,1]^2,\V)$, we define the horizontal and vertical inverses of $\bX$, denoted $\bX^{-h}, \bX^{-v} : [0,1]^2 \to \R^d$ by
\begin{align} \label{eq:strict_inverses}
    \bX^{-h}_{s,t} = \bX_{1-s,t} \quad \text{and} \quad \bX^{-v}_{s,t} = \bX_{s,1-t}.
\end{align}

\begin{remark}
    One can define a general concatenation of two surfaces $\bX: D_X \to \V$ and $\bY: D_Y \to \V$, where $D_X, D_Y \subset \R^2$ are two general domains, into a surface $\bX \concat \bY : D \to \V$, by specifying an inclusion $\iota: D_X \sqcup D_Y \rightarrow D$ such that the overlap occurs on the boundary of $D_X$ and $D_Y$. However, such surfaces are thin homotopy equivalent (\Cref{def:2d_thin_homotopy}) to the horizontal/vertical concatenations of surfaces on rectangular domains under mild conditions on the boundaries. Thus, there is no loss of generality by considering horizontal/vertical concatenations.
\end{remark}

\subsection*{Double Groups.}
We now formalize the notion of a double group: an algebraic object with two multiplication and inverse operations. This is a special case of a double groupoid, which has been widely studied in category theory.
\begin{definition}
    A \emph{double group} $\dgcm = (\sD_1, \sD_2)$ consists of a group $\sD_1$ of \emph{edges}, and a set $\sD_2$ of \emph{squares} equipped with left, right, bottom, and upper boundary maps $\bdy_l, \bdy_r, \bdy_b, \bdy_u : \sD_2 \to \sD_1$. In particular, an element $X \in \sD_2$ can be visualized as follows, where $w,x,y,z \in \sD_0$ denote the four corresponding boundaries. 
    \begin{figure}[!h]
        \includegraphics[width=\linewidth]{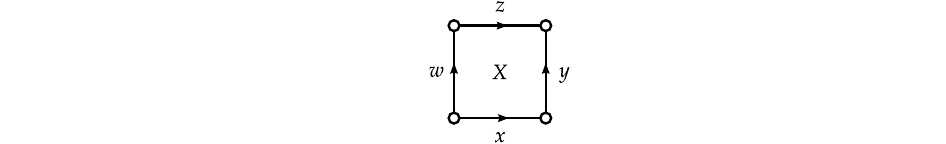}
    \end{figure}
    
    \noindent The squares in $\sD_1$ are equipped with two partial compositions. Let $X, X' \in \sD_2$.
    \begin{itemize}
        \item \textbf{(Horizontal Composition.)} If $\bdy_r X = \bdy_l X'$, there exists a composite $X \comp_h X'$.
        \item \textbf{(Vertical Composition.)} If $\bdy_u X = \bdy_b X'$, there exists a composite $X \comp_v X'$.
    \end{itemize}
    \begin{figure}[h]
        \includegraphics[width=\linewidth]{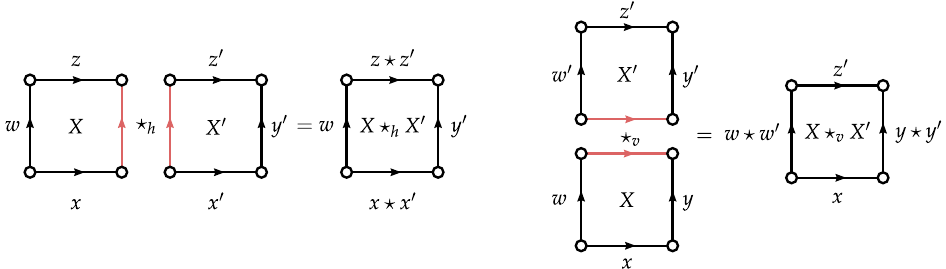}
    \end{figure}

    For any $X,Y,Z,W \in \sD_2$, which are appropriately composable, the \emph{interchange law} holds,
    \begin{align}
        (X \comp_h Y) \comp_v (W \comp_h Z) = (X \comp_v W) \comp_h (Y \comp_v Z),
    \end{align}
    Finally, there exist units and inverses for both compositions.
    \begin{itemize}
        \item \textbf{(Identity Squares.)} For any $x \in \sD_1$, there exists a \emph{horizontal identity} $1^h_x \in \sD_2$ and a \emph{vertical identity} $1^v_x \in \sD_2$ which acts as a unit under composition for horizontal and vertical composition respectively. In particular, for $X,Y \in \sD_2$ such that $\bdy_l X = \bdy_r Y = x$, we have $1^h_x \comp_h X = X$ and $Y \comp_h 1^h_x = Y$, and same for vertical composition.
        
        \item \textbf{(Isomorphism.)} For any $X \in \sD_2$, there exist horizontal and vertical inverses, denoted $X^{-h}, X^{-v} \in \sD_2$ respectively. In particular, we have $X \comp_h X^{-h} = 1^h_{\bdy_l X}$ and $X^{-h} \comp_h X = 1^h_{\bdy_r X}$, and similarly for $X^{-v}$. 
        \begin{figure}[h]
            \includegraphics[width=\linewidth]{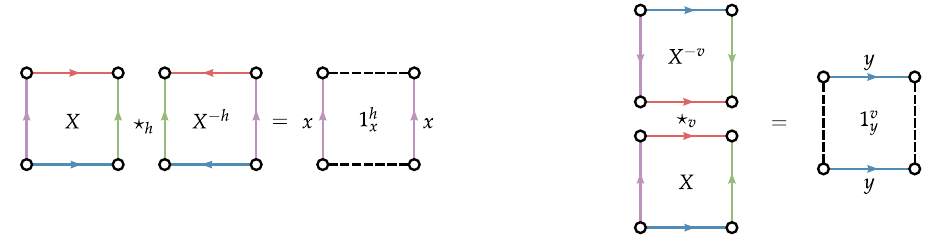}
        \end{figure}
    \end{itemize}
\end{definition}

\subsection*{Thin Homotopy and Unparametrized Surfaces as a Double Group}
As in the path case, one can view both horizontal and vertical composition as a partially defined group multiplication and the horizontal and vertical reversals as inverses.  
A minimal requirement is associativity for both horizontal and vertical composition.
However, these fail to hold due to reparametrization issues, as in the 1D setting.
Further, the horizontal composition of a surface $\bX$ with its horizontal inverse $\bX^{-h}$ results in a ``fold'' (and not the identity square).

In the 1D case, both these issues (non-associativity and reversal yielding not the identity) were resolved by identifying paths up to thin-homotopy. 
Hence, we arrive at the following direct generalization of thin homotopy.
\begin{figure}[!h]
    \includegraphics[width=\linewidth]{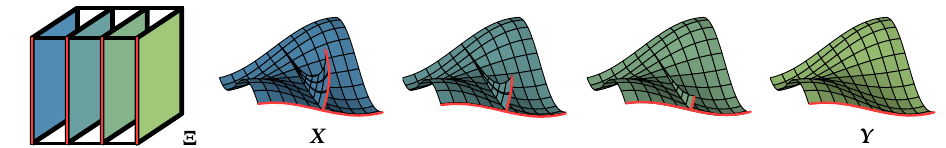}
\end{figure}    

\begin{definition}{\cite{martins_surface_2011}} \label{def:2d_thin_homotopy}
    Suppose $\bX, \bY \in C^\infty_0([0,1]^2,\V)$ are surfaces with equal corners, where $\bX_{i,j} = \bY_{i,j}$ for all $i,j \in \{0,1\}$. We say that $\bX$ and $\bY$ are \emph{thin homotopy equivalent}, denoted $\bX \sim_{\thinhom} \bY$ if there exists a smooth corner-preserving thin homotopy $\bighomotopy: [0,1]^3 \to \R^d$ between $\bX$ and $\bY$, where
    \begin{itemize}
        \item \textbf{(homotopy condition)} $\bighomotopy_{0,s,t} = \bX_{s,t}$ and $\bighomotopy_{1,s,t} = \bY_{s,t}$;
        \item \textbf{(thin homotopy boundaries)} the four sides of the homotopy $\bighomotopy_{u,s,0}, \bighomotopy_{u,s,1}, \bighomotopy_{u,0,t}, \bighomotopy_{u,1,t}$ are thin homotopies between the four boundary paths of $\bX$ and $\bY$; and
        \item \textbf{(thinness condition)} $\rank(d\bighomotopy) \leq 2$, where $d\bighomotopy$ is the differential of $\bighomotopy$.
    \end{itemize}
    We denote the set of thin homotopy (and translation) classes of surfaces by
    \begin{align} \label{eq:thin_surfaces}
        \thinsurface(\V) \coloneqq C^\infty_0([0,1]^2, \V)/\sim_{\thinhom}
    \end{align}
    and refer to the elements of $\thinsurface(V)$ as unparametrized surfaces.
\end{definition}
Thinness for surfaces intuitively means the homotopy sweeps no volume.
Thin homotopy classes of surfaces, indeed gives our first example of a double group. %

\begin{theorem}[\protect{\cite[Theorem 2.13]{faria_martins_fundamental_2011}}]
\label{def:thin_double_group} 
   The set $\thindg(\V) = (\thinpath(\V), \thinsurface(\V))$ of thin homotopy classes of paths~\Cref{eq:thin_paths} and of surfaces~\Cref{eq:thin_surfaces} is a double group.
   We call $\thindg(\V)$ the \emph{thin double group} on $\V$. 
    \end{theorem}%
Suppose $[\bX], [\bY] \in \thinsurface(\V)$ are two thin homotopy equivalence classes of surfaces, and let $\bX, \bY \in C^\infty_0([0,1]^2, \V)$ be arbitrary representatives.
Suppose $\bdy_r \bX \sim_{\thinhom} \bdy_l \bY$, and let $\homotopy: [0,1]^2 \to \R^d$ be a thin homotopy between $\bdy_r \bX$ and $\bdy_l \bY$. We define
    \begin{align} \label{eq:thinhom_hcomp}
        [\bX] \concat_h [\bY] \coloneqq [\bX \concat_h \homotopy \concat_h \bY],
    \end{align}
    where these surfaces are composable by the definition of the thin homotopy.
    \begin{figure}[!h]
        \includegraphics[width=\linewidth]{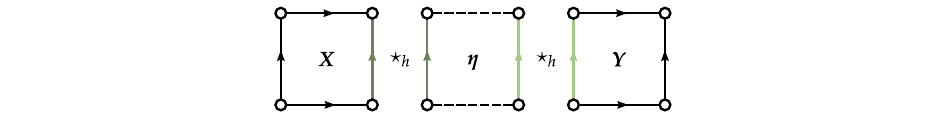}
    \end{figure}  
Vertical compositions are defined analogously. 
The fact that these compositions are well-defined is proved in Lemma 2.12 of~\cite{martins_surface_2011}. Furthermore, this satisfies the interchange law. Further details of the double group structure is found in~\Cref{apx:thin_dg}.

\subsection*{Morphism of Double Groups}
Now that we have identified the compositional structure that our surface representation has to respect as that of a double group, it is clear that our desired representation needs to be a morphism between double groups.
\begin{definition} \label{def:dg_morphism}
    Let $\dgcm, \bsE$ be double groups. A \emph{morphism of double groups} $\bF = (\bF_1, \bF_2): \dgcm \to \bsE$ is a group homomorphism $\bF_1: \sD_1 \to \sE_1$, and a function $\bF_2: \sD_2 \to \sE_2$ which preserves
    \begin{itemize}
        \item boundaries: $\partial \circ \bF_2 = \bF_1 \circ \partial$ for all $\partial \in \{\partial_l, \partial_r, \partial_b, \partial_u\}$;
        \item compositions: $\bF_2(X \concat_h Y) = \bF_2(X) \concat_h \bF_2(Y)$ and $\bF_2(X \concat_v Y) = \bF_2(X) \concat_v \bF_2(Y)$;
        \item identities: for $x \in \sD_1$, we have $\bF_2(1_x^h) = 1^h_{\bF_1(x)}$ and $\bF_2(1_x^v) = 1^v_{\bF_1(x)}$.
    \end{itemize}
\end{definition}
To sum up, we are looking for a double group morphism, 
\begin{equation}\label{eq:dg functor}
\bF^\bomega: \thindg(\V) \to \dgcm,
\end{equation}
from the domain $\thindg(\V)$ consisting of tuples of thin-homotopy classes surfaces and corresponding boundaries, into a double group $\dgcm$.

In the 1D case, such a morphism was given by parallel transport along a translation invariant connection. Thus, it is not surprising that the above double group morphism can be defined via \emph{surface holonomy}, a generalization of parallel transport, initially developed in mathematical physics~\cite{baez_higher_2004,baez_higher_2006,schreiber_smooth_2011,martins_two-dimensional_2010}; we will use the formulation from~\cite{martins_surface_2011}.
In the next section we describe a specific choice for $\dgcm$ that acts as a generalization of the general linear group that is used in the 1D case.
Computations in this group amount to simple matrix operations.
After that, we revisit the existence and uniqueness of the double group functor \eqref{eq:dg functor} defined via surface holonomy.

\section{Matrix Crossed Modules} \label{sec:matrix_crossed modules}
We will construct a double group made up of matrices.
These matrices represent automorphisms of
Baez-Crans 2-vector spaces~\cite{baez_higher-dimensional_algebras_2004}.
In order to do so, it becomes much more convenient to consider \emph{crossed modules}~\cite{brown_nonabelian_2011}, an algebraic structure which is equivalent to double groups, but is simpler to manipulate symbolically. To motivate this construction, we begin with an example of performing computations in simple double group. 
\subsection{From Groups to Double Groups.} \label{ssec:trivial_dg}
Here, we construct a double group $\dgcm(G)$ from an ordinary group $G$, where we define the edges and squares by
\begin{align} \label{eq:trivial_dg}
    \sD_1(G) \coloneqq G \quad \andd \quad \sD_2(G) \coloneqq \{ (x,y,z,w, E) \in G^5 \, :  \, E = xyz^{-1} w^{-1}\}.%
\end{align}
We will call this the \emph{trivial double group associated to $G$}, where the boundary map is the identity, $\cmb= \id$.
The fifth component, $E$, of $\dgcm_2(G)$ is the term which represents the interior, and is simply taken to be the product of the boundary terms in the counter-clockwise order. 
Now, let's consider the horizontal and vertical composition of two such squares
\[
    S= (x,y,z,w, E), \quad S' = (x',y',z',w', E') \in \dgcm_2(G).
\]

\begin{figure}[!h]
    \includegraphics[width=\linewidth]{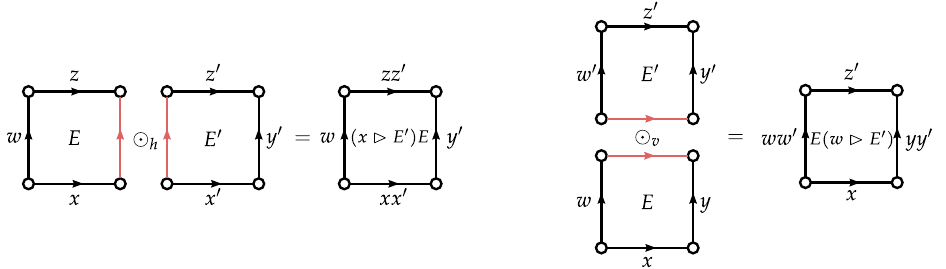}
\end{figure}

If $S$ and $S'$ are horizontally composable, meaning $y = w'$, then we can explicitly compute the boundary of the composed square as
\[
    x x' y' (z')^{-1} z^{-1} w^{-1} = x  \underbrace{\big(x' y' (z')^{-1} (w')^{-1}\big)}_{E'}  x^{-1}  \underbrace{\big(x y z^{-1} w^{-1}\big)}_{E} = (x \gt E')  E,
\]
where $x \gt y \coloneqq x y x^{-1}$ denotes the conjugation action. Therefore, the horizontal composition of $S$ and $S'$ is defined to be
\[
    S \dgcomp_h S' = (xx', y', zz', w, (x \gt E')  E).
\]
Similarly, if $S$ and $S'$ are vertically composable, meaning $z = x'$, then once again we compute the boundary of the vertically composed square as
\[
    x y y' (z')^{-1} (w')^{-1} w^{-1} = \underbrace{\big (x y z^{-1} w^{-1}\big)}_{E} w \underbrace{\big(x' y' (z')^{-1} (w')^{-1}\big)}_{E'} w^{-1} = E (w \gt E').
\]
Thus, the vertical composition of $S$ and $S'$ is defined to be
\[
    S \dgcomp_v S' = (x, yy', z, ww', E (w \gt E')).
\]
In this setting, the identities and inverses are also straightforward to define. In particular, given $x\in G$, we define the horizontal and vertical identities to be
\[
    1^h_x = (e, x, e, x, e) \quad \text{and} \quad 1^v_x = (x, e, x, e, e),
\]
where $e \in G$ is the identity. For $S = (x,y,z,w, E)$, the horizontal and vertical inverses are
\[
    S^{-h} \coloneqq (x^{-1}, w, z^{-1}, y; E^{-h}) \quad \text{and} \quad S^{-v} \coloneqq (z, y^{-1}, x, w^{-1}, E^{-v}),
\]
where
\begin{align*}
    E^{-h} = x^{-1} w z y^{-1}  = x^{-1} \gt E^{-1} \quad \andd \quad E^{-v} = z y^{-1} x^{-1} w  = w^{-1} \gt E^{-1}.
\end{align*}
One can easily check that these indeed satisfy the horizontal and vertical inverse conditions. \medskip

\subsection{Crossed Modules of Groups}
In the above example, the interior elements of squares in the double group $\dgcm(G)$ are completely determined by the boundary elements. 
However, to faithfully represent surfaces this is not sufficient, and we require a double group in which the interior element $E$ contains additional information. 
One can achieve this more general setting, by a pair of groups $(\cmG_0, \cmG_1)$ which satisfy some additional structure.

\begin{definition}
    \label{def:GCM}
        A \emph{crossed module of groups},
        \begin{align*}
            \cmG = \left(\cmb: \cmG_1\rightarrow \cmG_0,\quad \gt: \cmG_0 \rightarrow \Aut(\cmG_1) \right)
        \end{align*}
        is given by two groups $(\cmG_0, \cdot), (\cmG_1, *)$, a group morphism $\cmb: \cmG_1 \rightarrow \cmG_0$ and a left action of $\cmG_0$ on $\cmG_1$ (denoted element-wise by $g \gt : \cmG_1 \rightarrow \cmG_1$ for $g \in \cmG_0$) which is a group morphism, such that
        \begin{enumerate}
            \item[(CM1)] \textbf{(First Peiffer relation)} $\cmb(g \gt h) = g \cdot \cmb(h) \cdot g^{-1}$ \text{ for $g \in \cmG_0$ and $h \in \cmG_1$}
            \item[(CM2)] \textbf{(Second Peiffer relation)} $\cmb(h_1) \gt (h_2) = h_1 * h_2 * h_1^{-1}$ \text{ for $h_1, h_2 \in \cmG_1$}.
        \end{enumerate}
        A \emph{crossed module of Lie groups} is the same as above, except $\cmG_0$ and $\cmG_1$ are Lie groups, and all morphisms are smooth. 
\end{definition}

\begin{example}\label{ex:trivial_cm}
Let $G$ be a group. As a trivial example of a crossed module of groups, we consider $\cmG_0 = \cmG_1 = G$, the morphism $\cmb = \id : G \to G$ to be the identity, action to be conjugation.
\end{example}
\subsection*{From Crossed Modules to Double Groups}
Given a crossed module $\cmG = \left(\cmb: \cmG_1\rightarrow \cmG_0,\gt\right)$, we construct a double group $\dgcm (\cmG)$ which generalizes the construction from~\Cref{ssec:trivial_dg}.
We define the edges and squares by
\begin{align} \label{eq:double_group_of_crossed_module}
    \sD_1(\cmG) = \cmG_0 \quad \text{and} \quad \sD_2(\cmG) \coloneqq \{(x,y,z,w,E) \in \cmG_0^4 \times \cmG_1 \, : \, \cmb(E) = x y z^{-1} w^{-1}\}.
\end{align}
The compatibility condition for $\sD_2(\cmG)$ states that the interior element of squares must coincide with the boundary of the square under the map $\cmb$; thus we call $\cmb$ the \emph{crossed module boundary}. We define the composition of squares $S = (x,y,z,w,E)$ and $S' = (x', y', z', w', E')$ by,
\begin{align} \label{eq:dg_composition_hv}
    S \dgcomp_h S' = (x x', y', z z', w, (x \gt E') * E) \quad \text{and} \quad S \dgcomp_v S' = (x, y y', z', w w', E * (w \gt E')),
\end{align}
where we assume that $y=w'$ for horizontal composition and $z=x'$ for vertical composition, analogous to~\Cref{ssec:trivial_dg}. For these compositions to be well-defined, we must ensure that
\[
    \cmb((x \gt E') * E) = x \cdot x' \cdot y' \cdot (z')^{-1} \cdot z^{-1} \cdot w \quad \text{and} \quad \cmb(E * (w \gt E')) = x \cdot y \cdot y' \cdot z^{-1} \cdot w^{-1} \cdot w^{-1},
\]  
both of which can be checked by direct computation by using the first Peiffer relation. Furthermore, we must also ensure that the interchange law holds, which is once again verified by direct computation using the second Peiffer relation (see~\cite[Section 6.6]{brown_nonabelian_2011}). \medskip

For $x \in \cmG_0$, we define the horizontal and vertical identities to be
\begin{align}
    1^h_x = (e_1, x, e_1, x, e_2) \quad \text{and} \quad 1^v_x = (x, e_1, x, e_1, e_2),
\end{align}
where $e_1 \in \cmG_0$ and $e_2 \in \cmG_1$ are the respective identities. Finally, given $S = (x,y,z,w,E)$, the horizontal and vertical inverses are given by
\begin{align}
    S^{-h} \coloneqq (x^{-1}, w, z^{-1}, y, x^{-1} \gt E^{-1}) \quad \text{and} \quad S^{-v} \coloneqq (z, y^{-1}, x, w^{-1}, w^{-1} \gt E^{-1}),
\end{align}
where the inverses are understood to be the group inverses in $\cmG_0$ and $\cmG_1$.

\begin{proposition}{\cite[Section 6.6]{brown_nonabelian_2011}} \label{prop:dgcm}
    Given a crossed module $\cmG$, the structure $\dgcm(\cmG)$ is a double group.
\end{proposition}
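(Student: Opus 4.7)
The plan is to verify each axiom of Definition 3.2, specialized to the one-object case. Since $\cmG_0$ is already a group, the edge groupoid $\dgcm_1(\cmG)$ is in place immediately; the work lies with the squares. Every remaining axiom reduces to a direct calculation in $\cmG_0$ and $\cmG_1$, with the two Peiffer relations doing the essential work.

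First I would check that $\dgcomp_h$ and $\dgcomp_v$ actually produce elements of $\dgcm_2(\cmG)$, i.e.~preserve the boundary constraint $\cmb(E) = x \cdot y \cdot z^{-1} \cdot w^{-1}$. For horizontal composition with $w' = y$, since $\cmb$ is a group morphism and by the first Peiffer relation,
\begin{align*}
\cmb\bigl((x \gt E') * E\bigr)
&= \cmb(x \gt E') \cdot \cmb(E)
 = \bigl(x \cdot \cmb(E') \cdot x^{-1}\bigr) \cdot \cmb(E)\\
&= x \cdot x' y' (z')^{-1} y^{-1} \cdot x^{-1} \cdot x y z^{-1} w^{-1}
 = (x x') \cdot y' \cdot (z z')^{-1} \cdot w^{-1},
\end{align*}
which matches the prescribed boundary of $S \dgcomp_h S'$. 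The vertical case is symmetric. Associativity of both compositions then follows from associativity of $\cdot$ and $*$, together with $(x \cdot x') \gt F = x \gt (x' \gt F)$, a consequence of $\gt$ being a group morphism $\cmG_0 \to \Aut(\cmG_1)$.

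The crucial step, and the only one where the second Peiffer relation is indispensable, is the interchange law. Given four squares $S_i = (x_i, y_i, z_i, w_i, E_i)$ in a $2 \times 2$ arrangement, expanding both sides of the interchange equation and cancelling the common outermost factors $(x_1 \gt E_2)$ on the left and $(w_1 \gt E_3)$ on the right reduces the equality of interiors to
\[
E_1 * \bigl((w_1 \cdot z_1) \gt E_4\bigr) = \bigl((x_1 \cdot y_1) \gt E_4\bigr) * E_1.
\]
Using the boundary relation $\cmb(E_1) = x_1 y_1 z_1^{-1} w_1^{-1}$, one has $\cmb(E_1) \cdot (w_1 \cdot z_1) = x_1 \cdot y_1$ in $\cmG_0$, so that $(x_1 y_1) \gt E_4 = \cmb(E_1) \gt \bigl((w_1 z_1) \gt E_4\bigr)$. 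The second Peiffer relation $\cmb(h_1) \gt h_2 = h_1 * h_2 * h_1^{-1}$ then immediately yields the required identity. This is the step I expect to be the main obstacle: it is the only axiom whose verification does not reduce to formal manipulation in a single one of the two groups, and it is the precise algebraic content that forces the Peiffer relations to appear in the definition of a crossed module.

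Finally, the identity and inverse axioms follow by direct substitution. Inserting $1^h_y = (e_1, y, e_1, y, e_2)$ into the formula for $\dgcomp_h$ gives back $S$ because $x \gt e_2 = e_2$ and $e_2 * E = E$. For inverses, $S \dgcomp_h S^{-h}$ with $S^{-h} = (x^{-1}, w, z^{-1}, y, x^{-1} \gt E^{-1})$ has interior $\bigl(x \gt (x^{-1} \gt E^{-1})\bigr) * E = E^{-1} * E = e_2$ and boundary $(e_1, w, e_1, w)$, which is exactly $1^h_{\bdy_l S}$. The vertical cases are analogous, completing the verification.
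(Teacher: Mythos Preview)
Your proof is correct and follows exactly the approach the paper indicates: the paper itself does not spell out a proof but simply cites \cite[Section 6.6]{brown_nonabelian_2011}, noting in the surrounding text that the boundary condition for the compositions is verified using the first Peiffer relation and that the interchange law is verified using the second. Your reduction of the interchange law to $E_1 * ((w_1 z_1) \gt E_4) = ((x_1 y_1) \gt E_4) * E_1$ and its resolution via $\cmb(E_1) \cdot w_1 z_1 = x_1 y_1$ together with (CM2) is precisely the standard computation, and your checks of identities and inverses are accurate.
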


\subsection{Crossed Modules of Matrix Lie Groups} \label{ssec:cm_matrix_lie_groups}
We now carry out the above construction of a double group by defining a crossed module
\begin{align} \label{eq:general_linear_crossed_module}
    \cmGL^{n,m,p}= (\partial: \cmGL_1^{n,m,p} \rightarrow \cmGL_0^{n,m,p}, \gt)
\end{align}
of Lie groups. 
This can be seen as the higher analogue of the classical matrix groups.
The ideas is to use automorphisms of Baez-Crans 2-vector spaces, first defined in~\cite{baez_higher-dimensional_algebras_2004}. These notions of higher general linear groups have previously been studied in~\cite{forrester-barker_representations_2003,hereida_representation_2016,martins_lie_2011,del_hoyo_general_2019}.

\begin{definition} \label{def:2_vector_space}
    A \emph{2-vector space} $\cV$ consists of two vector spaces $V_0, V_1$, along with a linear map $\phi$,
    \[
        \cV = V_1 \xrightarrow{\phi} V_0.
    \] 
\end{definition}

Equation \Cref{eq:general_linear_crossed_module} is the \emph{general linear crossed module} of the 2-vector space
\[
    \cV :  \R^{n+m} \xrightarrow{\phi}  \R^{n+p},
\]
where we choose bases for $\R^{n+m}$ and $\R^{n+p}$ such that the linear map $\phi$ has the block form
\begin{align} \label{eq:phi}
    \phi = \pmat{I & 0 \\ 0 & 0}, \quad \text{where $I\in \Mat_{n,n}$ is the identity matrix}.
\end{align}

\textbf{The Group $\cmGL_0^{n,m,p}$.} The group $\cmGL_0^{n,m,p}$ consists of invertible chain maps of $\cV$. In particular, it consists of $(F,G)$, where $F \in \GL^{n+m}$ and $G \in \GL^{n+p}$ such that $\phi F =  G \phi$. Representing $F$ and $G$ in block matrix form,
\[
    F = \pmat{A & B \\ C & D} , \quad G = \pmat{ J & K \\ L & M},
\]
the chain map condition implies that $A = J$, $B=0$ and $L=0$. Therefore, we define $\cmGL_0^{n,m,p}$ to be
\begin{equation*}
    \cmGL_0^{n,m,p} = \left\{ F,G = \pmat{A & 0 \\ B &C}, \pmat{A & D \\ 0 & E }  :  A \in \GL^n,  B \in \Mat_{m,n},  C \in \GL^m, D \in \Mat_{n,p}, E \in \GL^p\right\}.
\end{equation*}
The group multiplication of $\cmGL_0^{n,m,p}$ is matrix multiplication in each coordinate. \medskip

\textbf{The Group $\cmGL_1^{n,m,p}$.} The group $\cmGL_1^{n,m,p}$ consists of chain homotopies in $\cV$ from the identity to an element of $\cmGL_0^{n,m,p}$. In particular, it consists of linear maps $H: \R^{n+p} \rightarrow  \R^{n+m}$ such that
\begin{align}
    H\phi = F- I \quad \andd \quad \phi H = G - I
\end{align}
for some $(F,G) \in \cmGL_0^{n,m,p}$. Expressing these conditions in block matrix form, we define
\begin{equation}
    \label{eq:GL1_def}
    \cmGL_1^{n,m,p} \coloneqq \left\{ h = \pmat{A - I & D \\ B & U} \, : \, A \in \GL^n, \, B \in \Mat_{m,n}, \, D \in \Mat_{n,p}, \, H \in \Mat_{m,p} \right\}.
\end{equation}

\textbf{Group Structure of $\cmGL_1^{n,m,p}$.} Group multiplication in $\cmGL_1^{n,m,p}$ corresponds to horizontal composition of chain homotopies. Concretely, the group multiplication is
\begin{align} \label{eq:cmgl2_multiplication}
    H*H' = H + H' + H\phi H'.
\end{align}
The unit is the zero map, which we simply denote by $0$, and the inverse of $H$ with respect to $*$ is
\begin{align}
    H^{-*} = -(I + H\phi)^{-1} H = -H(I + \phi H)^{-1}.
\end{align}

\textbf{Crossed Module Boundary.} The crossed module boundary map $\cmb: \cmGL_1^{n,m,p} \rightarrow \cmGL_0^{n,m,p}$ sends a homotopy to its target. This is defined as
\begin{equation} \label{eq:matrix_cmb}
    \cmb(H) = (H\phi + I, \phi H + I).
\end{equation}

\textbf{Crossed Module Action.} The last part of the crossed module structure to define is the action $\gt$ of $\cmGL_0^{n,m,p}$ on $\cmGL_1^{n,m,p}$. Given  $H \in \cmGL_1^{n,m,p}$ and $(F,G) \in \cmGL_0^{n,m,p}$, the action is defined by
\begin{equation}
    (F,G) \gt H \coloneqq FHG^{-1}.
\end{equation}

\begin{theorem}{\cite{martins_lie_2011}}
    The structure
    \[
    \cmGL^{n,m,p} = \big( \partial: \cmGL_1^{n,m,p} \rightarrow \cmGL_0^{n,m,p}, \gt)
    \]
    defined in this section is a crossed module of Lie groups.
\end{theorem}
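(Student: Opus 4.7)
The plan is to verify, in order, the five ingredients of~\Cref{def:GCM}: (i) $\cmGL_0^{n,m,p}$ and $\cmGL_1^{n,m,p}$ are Lie groups; (ii) $\cmb$ is a smooth group homomorphism; (iii) $\gt$ defines a smooth left action by group morphisms; (iv) the first Peiffer relation; and (v) the second Peiffer relation. The running workhorse will be the identity $G^{-1}\phi = \phi F^{-1}$, which is the chain-map condition $\phi F = G\phi$ rewritten using the invertibility of $F$ and $G$.

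For (i), $\cmGL_0^{n,m,p}$ is an open subset of an affine subspace of $\Mat_{n+m,n+m}\times \Mat_{n+p,n+p}$ cut out by the linear chain-map condition, so it is a Lie group under entrywise matrix multiplication. The space $\cmGL_1^{n,m,p}$ is an open subset of an affine subspace of $\Mat_{n+m,n+p}$ via the parametrisation in~\eqref{eq:GL1_def}; its product $H*H' = H + H' + H\phi H'$ is polynomial in entries, a direct expansion confirms associativity, and $0$ is a two-sided unit. The inverse formula $-(I + H\phi)^{-1}H$ is well defined because $I + H\phi$ is block-lower-triangular with diagonal blocks $A\in \GL^n$ and $I$, hence invertible; smoothness is automatic. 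For (ii), expanding and regrouping gives
\begin{align*}
\cmb(H * H') = \bigl((H\phi + I)(H'\phi + I),\, (\phi H + I)(\phi H' + I)\bigr) = \cmb(H)\cdot \cmb(H').
\end{align*}

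For (iii), $(F,G) \gt H = F H G^{-1}$ is polynomial in the entries of $F$, $G^{-1}$, and $H$, hence smooth, and a short block computation shows that the result has the form required by~\eqref{eq:ch_homotopy_block_form} (the $(1,1)$-block becomes $A_F A A_F^{-1} - I$ with $A_F A A_F^{-1} \in \GL^n$). Compatibility with composition in $\cmGL_0^{n,m,p}$ is associativity of matrix multiplication, and the key morphism property for $*$ uses $G^{-1}\phi = \phi F^{-1}$:
\begin{align*}
(FH_1 G^{-1}) * (FH_2 G^{-1}) = F H_1 G^{-1} + F H_2 G^{-1} + F H_1 \phi H_2 G^{-1} = (F,G) \gt (H_1 * H_2).
\end{align*}
The first Peiffer relation follows from the same identity applied directly: $\cmb(FHG^{-1}) = (FH\phi F^{-1} + I,\, G\phi HG^{-1} + I) = (F,G)\cdot \cmb(H)\cdot (F,G)^{-1}$.

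The main obstacle is the second Peiffer relation, since it mixes the $*$-multiplication nontrivially with the $\gt$ action, though it too reduces to bookkeeping. Writing $F_1 = I + H_1 \phi$ and $G_1 = I + \phi H_1$ so that $\cmb(H_1) = (F_1, G_1)$, and noting $H_1^{-*} = -F_1^{-1} H_1$ and $H_1 * H_2 = H_1 + F_1 H_2$, I expand $(H_1 * H_2) * H_1^{-*}$ and repeatedly apply the rewrites $H_1\phi F_1^{-1} = I - F_1^{-1}$ and $\phi F_1^{-1} = G_1^{-1}\phi$ (the latter being the chain-map identity for $\cmb(H_1)$). All terms involving only $H_1$ telescope to $0$, and what remains simplifies to $F_1 H_2 (I - G_1^{-1}\phi H_1) = F_1 H_2 G_1^{-1}$, which is precisely $\cmb(H_1) \gt H_2$.
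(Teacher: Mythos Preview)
Your verification is correct. The paper itself does not give a proof of this theorem: it simply cites the result from \cite{martins_lie_2011} and moves on. So there is no ``paper's own proof'' to compare against; your direct check of the axioms is exactly the kind of argument that the cited reference contains, carried out in the concrete block-matrix notation of Section~\ref{ssec:cm_matrix_lie_groups}.

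A couple of minor remarks on presentation. First, in step (i) you describe $\cmGL_1^{n,m,p}$ as an open subset of an affine subspace; in fact it is an open subset of the full vector space $\Mat_{n+m,n+p}$, since the only constraint in~\eqref{eq:GL1_def} is that the upper-left block plus $I$ be invertible. Second, for completeness you might note explicitly that the action preserves the unit ($(F,G)\gt 0 = 0$) and that $(I,I)\gt H = H$, though both are immediate. Neither point affects the validity of the argument.
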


\subsection{Crossed Modules of Matrix Lie Algebras} \label{ssec:cm_matrix_lie_algebras}

Crossed modules of Lie algebras (also called differential crossed modules), are the infinitesimal version of crossed modules of Lie groups.

\begin{definition}
    \label{def:DCM}
        A \emph{crossed module of Lie algebras},
        \begin{align*}
            \cmg = \left( \delta: \cmg_1 \rightarrow \cmg_0,\quad \gt: \cmg_0 \rightarrow \Der(\cmg_1)\right),
        \end{align*}
        is given by Lie algebras $(\cmg_0,[\cdot,\cdot]_1)$ and $(\cmg_1, [\cdot, \cdot]_2)$, a Lie algebra morphism $\delta: \cmg_1 \rightarrow \cmg_0$ and an action of $\cmg_0$ on $\cmg_1$ (denoted element-wise by $X \gt : \cmg_1 \rightarrow \cmg_1$ for $X \in \cmg_0$), which is a morphism of Lie algebras, such that
        \begin{enumerate}
            \item[(DCM1)] \textbf{(First Peiffer relation)} $\delta(X \gt v) = [X, \delta(v)]_{1}$; \text{for $X \in \cmg_0$ and $v \in \cmg_1$}.
            \item[(DCM2)] \textbf{(Second Peiffer relation)} $\delta(u)\gt (v) = [u,v]_{2}$; \text{for $u, v \in \cmg_1$}.
        \end{enumerate}
\end{definition}

For a crossed module of Lie groups, $\cmG = (\cmb: \cmG_1 \to \cmG_0)$, one can construct its associated crossed module of Lie algebras by considering the Lie algebras of the two Lie groups, along with the induced boundary map and action~\cite{faria_martins_crossed_2016}.
We apply this to $\cmGL^{n,m,p} \label{pg:cmgl}$ to get
\[
    \cmgl^{n,m,p} = \big( \cmb: \cmgl_1^{n,m,p} \rightarrow \cmgl_0^{n,m,p}, \gt).
\]
Because $\cmGL_0^{n,m,p}$ just contains pairs of matrices with matrix multiplication as the group multiplication, the Lie algebra $\cmgl_0^{n,m,p}$ is simply the space of all chain maps,
\[
    \cmgl_0^{n,m,p} = \left\{ X,Y = \pmat{A & 0 \\ B &C}, \pmat{A & D \\ 0 & E } : A \in \fgl^n,  B \in \Mat_{m,n},  C \in \fgl^m, D \in \Mat_{n,p}, E \in \fgl^p\right\}.
\]
The Lie bracket is the usual commutator of matrices, denoted $[\cdot, \cdot]$. 
Next, the Lie algebra of $\cmGL_1^{n,m,p}$ consists of linear transformations $Z: \R^{n+p} \rightarrow  \R^{n+m}$,
\[
    \cmgl_1^{n,m,p} \coloneqq \left\{ Z = \pmat{R & S \\ T & U} : R \in \Mat_{n,n}, \, S \in \Mat_{m,n}, \, T \in \Mat_{n,p}, \, U \in \Mat_{m,p}\right\},
\]
where the Lie bracket is the commutator with respect to the $*$-product,
\begin{align} \label{eq:star_commutator}
    [Z, Z']_* \coloneqq Z\phi Z' - Z' \phi Z.   
\end{align}
The boundary map, $\delta: \cmgl_1^{n,m,p} \rightarrow \cmgl_0^{n,m,p}$ and the action of $\cmgl_0^{n,m,p}$ on $\cmgl_1^{n,m,p}$ are given by
\[
    \cmb(Z) = (Z \phi, \phi Z) \quad \andd \quad (X,Y) \gt Z = X Z - Z Y.
\]

Finally, note the following relationship between the Lie group and Lie algebra actions: recall that for any action of Lie groups $\gt : G \to \Aut(H)$, there is a corresponding action of the Lie group $G$ on the Lie algebra $\fh$. 
In the case of the general linear crossed modules, the action $\gt: \cmGL_0^{n,m,p} \to \Aut(\cmgl_1^{n,m,p})$ is defined by
\begin{align} \label{eq:lg_la_action}
    (F,G) \gt X = F X G^{-1}.
\end{align}

\section{Matrix Surface Development}\label{sec:surface_development}
We are now in a position to explicitly define surface development. 
The surface development is realized as a translation-invariant surface holonomy, a generalization of parallel transport initially developed in mathematical physics~\cite{baez_higher_2004,baez_higher_2006,schreiber_smooth_2011,martins_two-dimensional_2010}; we will use the formulation from~\cite{martins_surface_2011}.

\begin{definition} \label{def:2_connection}
    A \emph{(fake-flat, translation-invariant) 2-connection} $\bomega = (\cona, \conc)$ over $\R^d$ valued in the differential crossed module $\cmg  = (\cmb: \cmg_1 \to \cmg_0)$ consists of a linear differential 1-form $\cona \in \Lin(\V, \cmg_0)$ and a linear differential 2-form $\conc \in \Lin(\Lambda^2\V, \cmg_1)$ of the form
    \begin{align}
        \cona \coloneqq \sum_{i=1}^d \cona^i \, dx_i \quad \text{and} \quad \conc \coloneqq \sum_{i < j} \conc^{i,j} \, dx_i \wedge dx_j,
    \end{align}
    where $\cona^i \in \cmg_0$ and $\conc^{i,j} \in \cmg_1$, which satisfies $\cmb \conc = \frac12[\cona, \cona]$ (the fake-flatness condition).
\end{definition}

Given such a 2-connection $\bomega = (\alpha, \gamma)$, we are ready to define our desired representation as a morphism of double groups (\Cref{def:dg_morphism}), %
\begin{align} \label{eq:sd_functor}
    \bF^{\bomega}: \thindg(\V) \to \dgcm(\cmG),
\end{align}
Here, $\dgcm(\cmG)$ is the double group associated to a crossed module of Lie groups $\cmG$, given in~\Cref{eq:double_group_of_crossed_module}. 
The morphism $\bF^\bomega$ is itself a tuple $(\bF^\bomega_1, \bF^\bomega_2)$.
This consists of a \emph{path component} $\bF^\bomega_1$, which is a group homomorphism defined by path development, $\bF_1^\bomega = F^\alpha: \thinpath(\V) \to \sD_1(\cmG) = \cmG_0$. 
However, it also contains the \emph{surface component}, $\bF_2^\bomega : \thinsurface(\V) \to \sD_2(\cmG)$, defined on a surface $\bX \in \thinsurface(\V)$ with boundary paths $\bx, \by, \bz, \bw$ by
\begin{align}
    \bF_2^\bomega(\bX) = \big(F^\alpha(\bx), F^\alpha(\by), F^\alpha(\bz), F^\alpha(\bw), H^\bomega(\bX)\big) \in \sD_2(\cmG).
\end{align}
The map $H^\bomega : \thinsurface(\V) \to \cmG_1$ is called the \emph{surface development}\footnote{The term \emph{surface development} will refer to $H^\bomega$, while we will refer to $\bF^\omega$ as the \emph{surface development functor}. While we primarily work with $H^\bomega$, we note that the compositions for $H^\bomega$ require information about the paths in $\bF^\bomega$.}, and is also defined as the solution to a differential equation.

\begin{definition}{\cite[Equation 2.13]{martins_surface_2011}} \label{def:sh}
    Let $\cmG = (\cmb: \cmG_1 \to \cmG_0, \gt)$ be a crossed module, and let $\bomega = (\cona, \conc)$ be a 2-connection valued in $\cmg$. Let $\bX \in C^\infty([0,1]^2, \V)$. The \emph{surface development of $\bX$ with respect to $\bomega$} is the solution of the differential equation for $H^{\bomega}_{s,t}(\bX) : [0,1]^2 \to \cmG_1$, defined by
    \begin{align}\label{eq:general_sh}
        \frac{\partial H^{\cona, \conc}_{s,t}(\bX)}{\partial t} = dL_{H^{\cona, \conc}_{s,t}(\bX)} \int_0^s F^{\cona}(\bx^{s',t}) \gt \conc\left( \pd{\bX_{s',t}}{s}, \pd{\bX_{s',t}}{t}\right) ds', \quad H^{\cona, \conc}_{s,0}(\bX) = e_{\cmG_1},
    \end{align}
    where $\gt$ is the induced action of $\cmG_0$ on $\cmg_1$, and $\bx^{s,t}: [0,1] \to \V$ is is the \emph{tail path} defined by
\begin{align} \label{eq:tail_path}
    \bx^{s,t}_u \coloneqq \left\{ \begin{array}{cl}
        \bX_{0,2ut} & : u \in [0,1/2] \\
        \bX_{(2u-1)s, t } & : u \in (1/2, 1]
    \end{array} \right.
\end{align}

\noindent We define
    \[
        H^{\cona, \conc}(\bX) \coloneqq H^{\cona, \conc}_{1,1}(\bX).
    \]
\end{definition}
This yields our desired morphism. In fact,~\cite{schreiber_smooth_2011} shows that all smooth morphisms arise from surface holonomy with respect to a (not necessarily translation-invariant) 2-connection. 

\begin{theorem}{\cite[Theorem 2.32]{martins_surface_2011}}
    Let $\bomega$ be a 2-connection valued in $\cmg$. The maps $\bF^\bomega = (\bF_1^\bomega, \bF_2^\bomega): \thinsurface(\V) \to \dgcm(\cmG)$ defined in~\Cref{eq:sd_functor} is a morphism of double groups. 
\end{theorem}

This result has three important components. First, it is invariant under thin homotopy of surfaces~\cite[Corollary 2.31]{martins_surface_2011}. Second, it is well-defined as an element of the double group; in particular, it satisfies the \emph{nonabelian Stokes' theorem}:
\begin{align}
    \cmb H^{\bomega}(\bX) = F^\cona(\partial \bX),
\end{align}
where $\cmb: \cmG_1 \to \cmG_0$ is the crossed module boundary, and $\partial\bX: [0,1] \to \V$ is the boundary path.%

\begin{remark} \label{ex:trivial_nonabelian_stokes}
    In the case of the trivial crossed module $\cmG$ from~\Cref{ex:trivial_cm}, the associdated double group is the trivial double group $\dgcm(G)$ defined in~\Cref{eq:trivial_dg}.
    Choose a 1-connection $\cona \in \Lin(\V, \fg)$, and define the 2-connection as the curvature $\conc = [\cona,\cona]\in \Lin(\Lambda^2\V, \fg)$ of $\cona$. In this setting, surface development simply recovers the path development of the boundary. This has previously been studied as a nonabelian Stokes' theorem (see~\cite{karp_product_1999}), going back to the work of Schlesinger~\cite{schlesinger_parallelverschiebung_1928}.
\end{remark}

Third, the maps preserve compositions and identities on thin homotopy classes; in particular,
\begin{align}
    H(\bX \comp_h \bY) = [F(\bx) \gt H(\bY)] * H(\bX)   \quad \andd \quad H(\bX \comp_v \bY) = H(\bX) * [ F(\bw) \gt H(\bY)],
\end{align}
and $H^\bomega(1^h_\bx) = H^\bomega(1^v_\bx) = e_1$, where $\bX,\bY \in \thinsurface(\V)$ are appropriately composable (thin homotopy classes of) surfaces, $\bx \in \thinpath(\V)$, and $e_1 \in \cmG_1$ is the identity.

\subsection{Matrix Surface Development} \label{ssec:matrix_sh}
So far, we have considered surface development for 2-connections valued in arbitrary differential crossed modules. Here, we will specialize to the case of \emph{matrix surface development}, when the 2-connections are valued in $\cmgl^{n,m,p}$.\medskip

\textbf{1-Connections and Path Development.}
We denote a 1-connection valued in $\cmgl_0^{n,m,p}$ by $(\cona, \conb) \in \Lin(\V, \cmgl_0^{n,m,p}) \cong \Lin(\V, \fgl^{n+m}) \op \Lin(\V, \fgl^{n+p})$, where
\[
    (\cona, \conb) = \sum_{i=1}^d (\cona^i, \conb^i) dx_i \quad \text{with} \quad \cona^i, \conb^i = \pmat{A^i & 0 \\ B^i & C^i}, \pmat{A^i & D^i \\ 0 & E^i} \in \cmgl_0^{n,m,p}.
\]
We view each component of $(\cona, \conb)$ separately as $\cona \in \Lin(\V, \fgl^{n+m})$ and $\conb \in \Lin(\V, \fgl^{n+p})$. Then, given a smooth path $\bx: [0,1] \to \V$, we denote the path development of $\bx$ by
\begin{align} \label{eq:matrix_holonomy_1_hol_part}
    F^{\cona, \conb}(\bx) = (F^{\cona}(\bx), F^{\conb}(\bx)) \in \cmGL_0^{n,m,p}.
\end{align}
In particular, $F^{\cona}(\bx) \in \GL^{n+m}$ is the path development with respect to $\cona$ and $F^{\conb}(\bx) \in \GL^{n+p}$ is the path development with respect to $\conb$.\medskip

\textbf{2-Connections.} We denote the 2-form in the 2-connection by $\conc \in \Lin(\Lambda^2\V, \cmgl_1^{n,m,p})$, where
\[
    \conc = \sum_{i < j} \conc^{i,j} dx_i \wedge dx_j \quad \text{with} \quad \conc^{i,j} = \pmat{R^{i,j} & S^{i,j} \\ T^{i,j} & U^{i,j}} \in \cmgl_1^{n,m,p}.
\]

However, there are some restrictions on the $\cona^i, \conb^i$ and $\conc^{i,j}$ due to the fake-flatness condition (\Cref{def:2_connection}). In particular, for each $i,j$, we must have $\cmb(\conc^{i,j}) = ([\cona^i, \cona^j], [\conb^i, \conb^j])$, leading to the following definition.

\begin{definition} \label{def:matrix_2connection}
    A \emph{matrix 2-connection} valued in $\cmgl^{n,m,p}$ is a triple $ \con = (\cona, \conb, \conc)$ such that $(\cona, \conb) \in \Lin(\V, \cmgl_0^{n,m,p})$ and $\conc \in \Lin(\Lambda^2\V, \cmgl_1^{n,m,p})$, where the components in block matrix form are
    \[
        \cona^i = \pmat{A^i & 0 \\ B^i & C^i},  \quad \conb^i =\pmat{A^i & D^i \\ 0 & E^i}, \quad \conc^{i,j} = \pmat{R^{i,j} & S^{i,j} \\ T^{i,j} & U^{i,j}},
    \]
    such that for all $i < j$, we have $[C^i, C^j] = [E^i, E^j] = 0$, $R^{i,j}  = [A^i, A^j]$,
    \begin{align*}
        S^{i,j}  = A^i D^j - A^j D^i + D^i E^j - D^j E^i  \quad \andd \quad T^{i,j}  = B^i A^j - B^j A^i + C^i B^j - C^j B^i.
    \end{align*}
    We let $\cM^{n,m,p}(\V)$ denote the set of matrix 2-connections on $\V$ valued in $\cmgl^{n,m,p}$. When the domain is clear, we simplify the notation and simply write $\cM^{n,m,p} \coloneqq \cM^{n,m,p}(\V)$.
\end{definition}

This implies that the choice of 1-connection fully determines the matrices $R^{i,j}, S^{i,j}$ and $T^{i,j}$, and the choice of 2-connection reduces to choosing a set of matrices $U^{i,j} \in \R^{m \times p}$. Furthermore, we note that $\cM^{n,m,p}$ is not a linear space due to the nonlinear conditions $[C^i, C^j] = 0$ and $[E^i, E^j] = 0$.\medskip

\textbf{Metrics on 2-Connections.}
While $\cM^{n,m,p}$ is not a linear space, it is a subset of the linear space $\Lin(\V, \cmgl_0^{n,m,p}) \op \Lin(\Lambda^2 \V, \cmgl_1^{n,m,p})$, which we equip with the Frobenius norm. Given $\con = (\cona, \conb, \conc) \in \Lin(\V, \cmgl_0^{n,m,p}) \op \Lin(\Lambda^2 \V, \cmgl_1^{n,m,p})$, we define
\begin{align} \label{eq:frob_norm_matrix_2_connection}
    \|\con\|_{\Fr}^2 \coloneqq \sum_{i=1}^d \|\cona^i\|_{\Fr}^2 + \sum_{i=1}^d \|\conb^i\|_{\Fr}^2 + \sum_{1 \leq i < j \leq d} \|\conc^{i,j}\|_{\Fr}^2,
\end{align}
where $\|\cdot\|_{\Fr}$ on the right side of the equation is the usual Frobenius norm for matrices $A \in \Mat_{n,m}$.
Then, we define a metric between 2-connections $d_{\cM}: \cM^{n,m,p} \times \cM^{n,m,p} \to \R$ by
\begin{align} \label{eq:2_connection_metric}
    d_\cM(\con_1, \con_2) \coloneqq \|\con_1 - \con_2\|_{\Fr}.
\end{align}

\textbf{Surface Development.}
Now, we will consider the surface development equation in~\Cref{eq:general_sh} for matrix 2-connections. We make this equation explicit by noting that left multiplication is given by $dL_H (X) = (I + H \phi)X$, and using the action $\gt$ of $\cmGL_0^{n,m,p}$ on $\cmgl_1^{n,m,p}$ from~\Cref{eq:lg_la_action}.
The \emph{matrix surface development equation} with respect to the matrix 2-connection $\con = (\cona, \conb, \conc) \in \cM^{n,m,p}$ is
\begin{align}\label{eq:matrix_sh1}
    \frac{\partial H^{\con}_{s,t}(\bX)}{\partial t} = (I + H^{\con}_{s,t}(\bX)\phi)  \int_0^s F^{\cona}(\bx^{s',t}) \cdot\conc\left( \pd{\bX_{s',t}}{s}, \pd{\bX_{s',t}}{t}\right) \cdot \left(F^{\conb}(\bx^{s',t})\right)^{-1} ds',
\end{align}
with boundary conditions $H^{\con}_{s,0}(\bX) = H^{\con}_{0,t}(\bX) = 0$. Note that we can evaluate $\conc$ to get
\[
    \conc\left( \pd{\bX_{s',t}}{s}, \pd{\bX_{s,t}}{t}\right) = \conc \left(J_{s,t}[\bX]\right) \quad \text{where} \quad J[\bX] : [0,1]^2 \to \Lambda^2 \V \quad \text{with} \quad J^{i,j}_{s,t}[\bX] \coloneqq \vmat{ \pd{\bX^i_{s,t}}{s} & \pd{\bX^i_{s,t}}{t}\\ \pd{\bX^j_{s,t}}{s} & \pd{\bX^j_{s,t}}{t}}
\]
denotes the Jacobian of $\bX$.

\section{Young Surface Development}\label{sec:Young}
We will now move beyond the smooth setting, and consider surface development in the Young regime. In particular, we will consider the generalized $\rho$-H\"older spaces, as defined in~\cite{harang_extension_2021}, in the case of $\rho > \frac12$. This is the usual definition for paths, but we must consider the regularity of 2D increments for surfaces.

\begin{remark}
    For readers primarily interested in characteristic functions for smooth random surfaces, we emphasize that the results in~\Cref{sec:parametrized} and~\Cref{sec:characteristicness} hold for smooth surfaces as a special case, and can be read independently from this section by replacing $\rho$-H\"older with smooth.
\end{remark}

\begin{definition}
    Let $\rho \in (0,1]$. We say that a path $\bx \in C([0,1], V)$ is $\rho$-H\"older, $\bx \in C^\rho([0,1], \V)$, if
    \begin{align}
        \|\bx\|_\rho \coloneqq \llb\bx\rrb_{\rho} + \|\bx\|_\infty < \infty \quad \text{where} \quad \llb \bx\rrb_{\rho} \coloneqq \sup_{s < t} \frac{|\bx_t - \bx_s|}{|t -s|^\rho} < \infty.
    \end{align}
\end{definition}

For a \emph{rectangle} $R = [s_1,s_2] \times [t_1, t_2] \subset [0,1]^2$, we denote the \emph{2D increment} of $\bX$ by
\begin{align} \label{eq:2d_increment}
    \square_R[\bX] \coloneqq \bX_{s_1, t_1} - \bX_{s_1, t_2} - \bX_{s_2, t_1} + \bX_{s_2, t_2}.
\end{align}

\begin{definition}
    Let $\rho \in (0,1]$. We define the following quantities for a surface $\bX \in C([0,1]^2, V)$.
    \begin{align*}
        |\bX|_{\rho, (1)} = \sup_{\substack{t\in [0,1]\\s_1 < s_2}} \frac{|\bX_{s_1,t} - \bX_{s_2, t}|}{|s_1 - s_2|^\rho}, \quad |\bX|_{\rho, (2)} \sup_{\substack{s\in [0,1]\\ t_1 < t_2}} \frac{|\bX_{s,t_1} - \bX_{s,t_2}|}{|t_1 - t_2|^\rho}, \quad |\bX|_{\rho, (1,2)} = \sup_{\substack{s_1 < s_2\\t_1 < t_2}} \frac{|\square_R[\bX]|}{|s_1 - s_2|^\rho |t_1 - t_2|^\rho}.
    \end{align*}
    We define $\llb \bX \rrb_{\rho} \coloneqq |\bX|_{\rho, (1)} + |\bX|_{\rho, (2)} + |\bX|_{\rho, (1,2)}$, and say that $\bX$ is $\rho$-H\"older, $\bX \in C^\rho([0,1]^2, \V) \label{pg:holder_space}$, if 
    \begin{align}
        \|\bX\|_\rho \coloneqq \llb \bX\rrb_\rho + \|\bX\|_\infty < \infty.
    \end{align}
\end{definition}

\subsection{Existence and Continuity}
Consider a matrix 2-connection $\con = (\cona, \conb, \conc) \in \cM^{n,m,p}$. Our main task is to reformulate the matrix surface development equation in~\Cref{eq:matrix_sh1} using multidimensional Young integration~\cite{harang_extension_2021}. We start by considering a smooth surface $\bX \in C^\infty([0,1]^2, \V)$, where we note that we can express~\Cref{eq:matrix_sh1} as a 1D CDE in the form of 
\begin{align} \label{eq:msh_decomp1}
    \partial_t  H^\bomega_{s,t}(\bX) =  (I + H^\bomega_{s,t}(\bX)\phi) \partial_t Z^\bomega_{s,t}(\bX), \quad H_{s,0} = 0.
\end{align}
Here, the surface $Z^\bomega_{s,t}: [0,1]^2 \to \cmgl^{n,m,p}_1$ is defined by the integral
\begin{align} \label{eq:msh_decomp2}
    Z^\bomega_{s,t}(\bX) \coloneqq   \int_0^t \int_0^s F^\alpha(\bx^{s',t'}) \cdot \conc \cdot F^\beta(\bx^{s',t'})^{-1} \, dA_{s',t'}(\bX),
\end{align}
where $A(\bX): [0,1]^2 \to \Lambda^2\V$ is the \emph{area process} of $\bX$,
\begin{align}
    A_{s,t}(\bX) = \int_0^t \int_0^s  J_{s',t'}(\bX) \,ds' dt'.
\end{align}
We claim that this formulation of the surface development equation is well-defined for surfaces in the Young regime. Let's suppose that $\bX \in C^\rho([0,1]^2, \V)$ with $\rho > \frac12$. The \emph{area process} is simply the signed area of $\bX$ restricted to $[0,s] \times [0,t]$, and can be computed by 1D Young integration of the boundary. Similarly, the path developments $F^\alpha$ and $F^\beta$ of the tail paths are well-defined Young CDEs; thus~\Cref{eq:msh_decomp2} can be computed as a 2D Young integral. Finally, by restricting $Z^\bomega_{s,t}(\bX)$ to $s = 1$, we obtain a $\rho$-H\"older driving signal, and thus~\Cref{eq:msh_decomp1} is also a Young CDE. \medskip

The main theorem of this section shows that surface development is well-defined and continuous in the Young regime. The proof primarily consists of verifying the generalized H\"older regularity of the various components, and can be found in~\Cref{apx:young_proofs}.

\begin{theorem} \label{thm:main_sd_holder}
    The surface development map $H: C^{\rho}([0,1]^2, \V) \times \cM^{n,m,p} \to \cmGL_1^{n,m,p}$ is well-defined and locally Lipschitz with respect to $C^{\rho}([0,1]^2, \V)$ and  $\cM^{n,m,p}$.
\end{theorem}

\subsection{Young Double Groups}
We will now consider a generalization of the thin double group which contains lower regularity surfaces in the Young regime. In order to take advantage of the $\rho$-H\"older continuity of surface development, we will use the closure (under the generalized H\"older norms) of smooth paths ($k=1$) and surfaces ($k=2 \label{pg:smoothcl_holder}$),
\begin{align}
    C^{0, \rho}([0,1]^k, \V) \coloneqq \left\{ \bX \in C^\rho([0,1]^k, \V) \, : \, \exists \bX^n \in C^\infty([0,1]^k, \V) \text{ such that } \bX^n \xrightarrow{\rho} \bX\right\}.
\end{align}

However, as the definition of both 1D and 2D thin homotopy require differentiability, we will define this through limits of smooth paths and surfaces.

\begin{definition} \label{def:thin_smoothcl_holder}
    We say that $\bX, \bY \in C^{0, \rho}_0([0,1]^k, \V)$ are \emph{thin homotopy equivalent}, $\bX \sim_{\thinhom} \bY$, if for any sequence $\bX^n \in C^\infty_0([0,1]^k, \V)$ such that $\bX^n \xrightarrow{\rho} \bX$, there exists a sequence $\bY^n \in C^\infty_0([0,1]^k, \V)$ such that $\bY^n \xrightarrow{\rho} \bY$ and $\bX^n \sim_{\thinhom} \bY^n$, and similarly if we start with a sequence for $\bY$. We denote the set of thin homotopy classes of paths and surfaces by
    \begin{align}
        \mathsf{T}_k^{0,\rho}(\V) \coloneqq C^{0,\rho}_0([0,1]^k, \V)/\sim_{\thinhom}.
    \end{align}
\end{definition}

\begin{remark}
    We note that this notion of thin homotopy of paths is still equivalent to tree-like equivalence for $\rho$-H\"older paths~\cite{boedihardjo_signature_2016}
\end{remark}

We can similarly define composition of thin homotopy classes.

\begin{definition} \label{def:young_composition}
    We say that $[\bX], [\bY] \in \thinsurface^{0,\rho}(\V)$ are \emph{horizontally (resp.~vertically) composable} if for every sequence $\bX^n \in C^{\infty}([0,1]^2, \V)$ such that $\bX^n \xrightarrow{\rho} \bX \in [\bX]$, there exists a sequence $\bY^n \in C^{\infty}([0,1]^2, \V)$ such that $\bY^n \xrightarrow{\rho} \bY \in [\bY]$ and $\bX^n$ and $\bY^n$ are horizontally (resp.~vertically) composable. We define their compositions respectively as
    \begin{align}
        [\bX] \concat_h [\bY] = \left[\lim_{n \to \infty} \bX^n \concat_h \bY^n\right] \quad \andd \quad [\bX] \concat_v [\bY] = \left[ \lim_{n \to \infty} \bX^n \concat_v \bY^n \right].
    \end{align}
\end{definition}

Identities and inverses are defined in the same way as the smooth setting (see~\Cref{apx:thin_dg}).

\begin{proposition} \label{prop:thin_holder_dg}
    Let $\rho > \frac12$. The pair $\thindg^{0,\rho}(\V) \coloneqq (\thinpath^{0,\rho}(\V), \thinsurface^{0,\rho}(\V))$ equipped with compositions in~\Cref{def:young_composition}, along with identities and inverses defined in~\Cref{apx:thin_dg}, is a double group.
\end{proposition}
\begin{proof}
    We begin by proving that the four boundary maps $\partial_i: \thinsurface^{0,\rho}(\V) \to \thinpath^{0,\rho}(\V)$ are well defined. Suppose $\bX, \bY \in C^{0,\rho}([0,1]^2, \V)$ such that $\bX \sim_{\thinhom} \bY$, so that there exists $\bX^n, \bY^n \in C^\infty([0,1]^2, \V)$ such that $\bX^n \sim_{\thinhom} \bY^n$ and $\bX^n, \bY^n \xrightarrow{\rho} \bX, \bY$. Therefore, for each boundary path, we have $\partial_i \bX^n \sim_{\thinhom} \partial_i \bY^n$ and $\partial_i \bX^n, \partial_i \bY^n \xrightarrow{\rho} \partial_i \bX, \partial_i \bY$, so that $\partial_i \bX \sim_{\thinhom} \partial_i \bY$.
    The remaining properties (associativity and inverses) can be directly verified by considering smooth approximations and using the analogous properties in the smooth setting.
\end{proof}

Then, an immediate corollary of~\Cref{thm:main_sd_holder} shows that surface development is a morphism of double groups in the Young regime.

\begin{theorem} \label{thm:young_sd_functorial}
    Let $\bomega \in \cM^{n,m,p}$ be a matrix 2-connection. The maps $\bF^\bomega = (\bF_1, \bF_2): \thindg^{0,\rho}(\V) \to \dgcm(\cmGL^{n,m,p})$, where $\bF^\bomega_1 = F^{\alpha, \beta} : \thinpath^{0,\rho}(\V) \to \cmGL_0^{n,m,p}$ is path development in the Young regime, and
    \begin{align}
    \bF_2^\bomega(\bX) = \big(F^\alpha(\bx), F^\alpha(\by), F^\alpha(\bz), F^\alpha(\bw), H^\bomega(\bX)\big) \in \sD_2(\cmG),
    \end{align}
    where $H^\bomega: \thinsurface^{0,\rho}(\V) \to \cmGL_1^{n,m,p}$ is the Young surface development defined in~\Cref{thm:main_sd_holder}, is well-defined and is a morphism of double groups. 
\end{theorem}

\section{Parametrized Surface Development} \label{sec:parametrized}

In this section, we show that matrix surface development separates surfaces (up to translation) for parametrized surfaces in the Young regime. The results in this section do not require considering the closure of smooth surfaces. For $\bX \in C^{\rho}_0([0,1]^2, \V)$, we define
\begin{align} \label{eq:parametrized_surface}
    \obX_{s,t} \coloneqq (s,t, \bX_{s,t}) : [0,1]^2 \to \pV
\end{align}
to be the \emph{parametrized surface of $\bX$}.
We denote 2-connections for parametrized surfaces in $\pV$ by
\begin{align*}
    \cona = \cona^s ds + \cona^t dt + \sum_{i=1}^d \cona^i dx_i, \quad  \conc = \conc^{s,t} ds \wedge dt + \sum_{i=1}^d \big(\conc^{s,i} ds \wedge dx_i + \conc^{t,i} dt \wedge dx_i\big) + \sum_{i < j} \conc^{i,j} dx_i \wedge dx_j,
\end{align*}
where we use the $s$ and $t$ superscripts as indices for the $s$ and $t$ coordinates in $\obX$. We prove the separation property by considering the boundary (\Cref{prop:injectivity_boundary}) and interior (\Cref{prop:injectivity_interior}) of a surface $\bX$ individually. We obtain the following result by putting these two together. 

\begin{theorem} \label{thm:sh_injectivity}
    Let $\bX, \bY \in C^{\rho}_0([0,1]^2, \V)$ with $\bX \neq \bY$. There exists $\con \in \cM^{n,m,p}(\pV)$ such that
    \[
        H^{\con}(\obX) \neq H^{\con}(\obY).
    \]
\end{theorem}

\subsection{Characterizing the Boundary}

To capture information about the boundary, we consider $\cmgl^{n,0,0}$ such that in the block matrix notation of~\Cref{def:matrix_2connection}, we only have a nontrivial upper left block. Given the fake-flatness conditions on $\con = (\cona, \conb, \conc)\in \cM^{n,0,0}(\R^{d+2})$, we must have $\conb = \cona$ and $\conc = \kappa^\alpha = \frac12[\cona,\cona]$. This reduces surface development to the case discussed in~\Cref{ex:trivial_nonabelian_stokes}, where $H^{\con}(\bX) = F^{\cona}(\partial \bX)$.
We begin with a preliminary lemma. 

\begin{lemma} \label{lem:tleq_parametrized_boundary}
    Let $\bX, \bY \in C^{\rho}_{\zor}([0,1]^2, \V)$. Then, 
    $\partial \obX \sim_{\tleq} \partial \obY$ if and only if $\partial \bX = \partial \bY$. 
\end{lemma}
\begin{proof}
    If $\partial \bX = \partial \bY$, then $\partial \obX = \partial \obY$, and thus $\partial \obX \sim_{\tleq} \partial \obY$. Now, suppose $\partial \obX \sim_{\tleq} \partial \obY$. Let $\obu^{\bX}, \obv^{\bX}, \obw^{\bX}, \obz^{\bX} : [0,1] \to \pV$ denote the bottom, right, upper, and left boundary paths of $\obX$, and similarly for $\obY$. 
    By definition, $\partial \obX \sim_{\tleq} \partial \obY$, means that
    \[  
        \partial \obX \concat (\partial \obY)^{-1} = \obu^{\bX} \concat \obv^{\bX} \concat (\obw^{\bX})^{-1} \concat (\obz^{\bX})^{-1} \concat \obz^{\bY} \concat \obw^{\bY} \concat (\obv^{\bY})^{-1} \concat (\obu^{\bX})^{-1}
    \]
    is tree-like equivalent to the constant path. This implies that
    \begin{align} \label{eq:tleq_boundary}
        \obu^{\bX} \sim_{\tleq} \obu^{\bY}, \quad \obv^{\bX} \sim_{\tleq} \obv^{\bY}, \quad \obw^{\bX} \sim_{\tleq} \obw^{\bY} \quad \andd \quad \obz^{\bX} \sim_{\tleq} \obz^{\bY},
    \end{align}
    which is due to the $s$ and $t$ parameters in $\obX$ and $\obY$. 
    Because each of the boundary paths has a monotone coordinate,~\Cref{eq:tleq_boundary} implies that 
    \[
        \obu^{\bX} = \obu^{\bY}, \quad \obv^{\bX} = \obv^{\bY}, \quad \obw^{\bX} = \obw^{\bY} \quad \andd \quad \obz^{\bX} = \obz^{\bY}.
    \]
\end{proof}

\begin{proposition} \label{prop:injectivity_boundary}
    Let $\bX, \bY \in C^{\rho}_{\zor}([0,1]^2,\V)$. If $\partial \bX \neq \partial \bY$, there exists $\con \in \cM^{n,0,0}(\R^{d+2})$ such that
    \[
        H^\con(\obX) \neq H^\con(\obY).
    \]
\end{proposition}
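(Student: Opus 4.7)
The plan is to reduce the surface-level separation statement to a path-level separation statement that is already known from~\Cref{thm:informal_intro_path}(2), using the boundary description of matrix surface development in $\cM^{n,0,0}$.

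First I would analyze the structure of matrix 2-connections $\con = (\cona, \conb, \conc) \in \cM^{n,0,0}(\R^{d+2})$. By~\Cref{def:matrix_2connection} the only nonvanishing block of $\cona^i, \conb^i, \conc^{i,j}$ is the upper-left $n \times n$ block, and the compatibility conditions force $\conb = \cona$ and $\conc = \kappa^\cona$. This places us exactly in the trivial crossed module setting of~\Cref{ex:trivial_nonabelian_stokes}, so the smooth nonabelian Stokes' theorem, extended to the Young regime in~\Cref{thm:young_stokes}, yields
\begin{align*}
    H^{\con}(\obX) = F^{\cona}(\partial \obX) \quad \andd \quad H^{\con}(\obY) = F^{\cona}(\partial \obY),
\end{align*}
where $\partial \obX, \partial \obY \in C^{p-\var}([0,1], \R^{d+2})$ are the boundary loops of the parametrized surfaces.

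Next I would invoke~\Cref{lem:tleq_parametrized_boundary}: since $\partial \bX \neq \partial \bY$ by hypothesis, the boundaries $\partial \obX$ and $\partial \obY$ are \emph{not} tree-like equivalent as paths in $\R^{d+2}$. At this point the problem reduces entirely to path development: we need a linear map $\cona \in L(\R^{d+2}, \fgl^n)$ for some $n$ such that $F^{\cona}(\partial \obX) \neq F^{\cona}(\partial \obY)$. This is precisely the point-separating property of path development for bounded $p$-variation paths stated in~\Cref{thm:informal_intro_path}(2) (and proved in~\Cref{apx:path_functors}), which applies since $\partial \obX, \partial \obY \in C^{p-\var}([0,1], \R^{d+2})$ are bounded $p$-variation paths that are not tree-like equivalent. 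Taking such an $\cona$ and completing it to the 2-connection $\con = (\cona, \cona, \kappa^\cona) \in \cM^{n,0,0}(\R^{d+2})$ produces the required separating matrix surface development.

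The bulk of the work here is bookkeeping: verifying that the block-matrix and semiflatness constraints collapse $\cM^{n,0,0}$ to a space parametrized solely by the 1-connection $\cona$, and carefully checking via~\Cref{lem:tleq_parametrized_boundary} that distinct boundary paths of $\bX$ and $\bY$ give rise to tree-like inequivalent boundaries of $\obX$ and $\obY$ (this is where the $(s,t)$-augmentation is essential — without it one could only recover $\partial \bX$ up to tree-like equivalence). The genuinely substantive ingredient is the path-level separation theorem, which we consume as a black box from the appendix.
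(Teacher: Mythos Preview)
Your proposal is correct and follows essentially the same approach as the paper's own proof: reduce $\cM^{n,0,0}$ to the trivial crossed module so that $H^\con(\obX) = F^\cona(\partial\obX)$, invoke \Cref{lem:tleq_parametrized_boundary} to conclude $\partial\obX \not\sim_{\tleq} \partial\obY$, and then appeal to the path-level separation result (the paper cites \cite[Theorem 4.8]{chevyrev_characteristic_2016} directly rather than \Cref{thm:informal_intro_path}, but this is the same underlying fact). The only cosmetic difference is that you explicitly name \Cref{thm:young_stokes} for the Young-regime reduction, whereas the paper relies on the discussion preceding the proposition.
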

\begin{proof}
    We consider a matrix 2-connection $\con = (\cona, \cona, \kappa^\cona) \in \cM^{n,0,0}(\R^{d+2})$ where $H^{\con}(\obX) = F^{\cona}(\obX)$. Thus, it suffices to show there exists a 1-connection $\cona$ such that $F^{\cona}(\partial \obX) \neq F^{\cona}(\partial \obY)$. Because path holonomies separate paths up to tree-like equivalence by~\cite[Theorem 4.8]{chevyrev_characteristic_2016} and~\cite{boedihardjo_signature_2016}, ~\Cref{lem:tleq_parametrized_boundary} shows that such a 1-connection must exist.
\end{proof}

\subsection{Characterizing the Interior}
We consider $\cmgl^{0,m,p}$ to capture information purely about the interior of a surface. From the fake-flatness conditions for $\con = (\cona, \conb, \conc) \in \cM^{0,m,p}(\R^{d+2})$, we must have $[\cona,\cona] = 0$ and $[\conb, \conb]= 0$, and no conditions on $\conc$. Because $\cona$ and $\conb$ are flat connections on the contractible space $\R^{d+2}$, the holonomy over loops is trivial; thus $\cmb H^{\con}(\obX) = I$.
Furthermore, the map $\phi$ of the 2-vector space is trivial, so the matrix surface development \Cref{eq:msh_decomp1} reduces to
\[
    \partial_t H^\con_{s,t}(\obX) = \partial_t Z^\con_{s,t}(\obX).
\]
Thus $H^\con(\obX) = Z^\con(\obX)$, so it suffices to consider the computation of $Z^{\con}(\obX)$.

We begin with the parametrized area process $A(\obX)$ of a surface $\bX \in C^{\rho}_0([0,1]^2, \V)$.
Let $A^{s,i}(\obX)$ and $A^{t,i}(\obX)$ denote the components of the area process $A(\obX)$ which correspond to the areas of $(s, \bX^i)$ and $(t, \bX^i)$ respectively. 
These area components can be computed as
\[
    A^{s,i}_{s,t}(\obX) = \int_0^s (\bX^i_{s', t} - \bX^i_{s',0}) ds' \quad \text{and} \quad A^{t,i}_{s,t}(\obX) = -\int_0^t (\bX^i_{s, t'} - \bX^i_{0,t'}) dt'.
\]
We will only need to consider one of the two area processes, so we focus on $A^{s,i}_{s,t}(\obX)$.
If $\bX, \bY \in C^{\rho}_0([0,1]^2, \V)$ are distinct surfaces which have the same boundary, $\partial \bX = \partial \bY$, then there must exist some $i$ such that $A^{s,i}(\obX) - A^{s,i}(\obY) \neq 0$. Note that these components of the area process are \emph{linear} with respect to the surface,
\[
    A^{s,i}_{s,t}(\obX) - A^{s,i}_{s,t}(\obY) = A^{s,i}_{s,t}(\overline{\bX - \bY}).
\]
and thus it suffices to consider the difference $\bX - \bY$ which has trivial boundary. Let $C^\rho_{\square}([0,1]^2, \V) \label{pg:triv_boundary}$ denote the space of surfaces with trivial boundary. 

\begin{lemma} \label{lem:parametrized_area_nontrivial}
    Let $\bX \in C^{\rho}_{\zbdy}([0,1]^2, \V)$. If $\bX \neq 0$, then there exist some $i \in [d]$, $0 < u, u' < 1$ and $0 < v, v' < 1$ such that
    \[
        \square_{u,u';v,v'}[A^{s,i}(\obX)] \neq 0.
    \]
\end{lemma}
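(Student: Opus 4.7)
The plan is to reduce the computation of the 2D increment of $A^{s,i}(\obX)$ to an easily analyzed form using the zero boundary hypothesis, and then invoke continuity of $\bX$ to locate a rectangle on which the increment is nonzero.

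First, because $\bX \in C^{p-\cvar}_{\zbdy}([0,1]^2,\V)$ vanishes on the bottom edge, the formula $A^{s,i}_{s,t}(\obX) = \int_0^s (\bX^i_{s',t} - \bX^i_{s',0})\, ds'$ collapses to $A^{s,i}_{s,t}(\obX) = \int_0^s \bX^i_{s',t}\, ds'$. Expanding the 2D increment over $R = [u,u'] \times [v,v']$ using the definition in~\Cref{eq:2d_increment} and cancelling the integrals on $[0,u]$ gives
\[
\square_R[A^{s,i}(\obX)] \;=\; \int_u^{u'} \bigl( \bX^i_{s',v'} - \bX^i_{s',v} \bigr)\, ds'.
\]

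Next, I will choose $u,u',v,v'$ using continuity of $\bX$ (controlled $p$-variation surfaces are continuous). Since $\bX \not\equiv 0$, there exists $(s_0,t_0) \in [0,1]^2$ and an index $i \in [d]$ with $\bX^i_{s_0,t_0} \neq 0$; after possibly replacing $\bX^i$ with $-\bX^i$ we may assume $\bX^i_{s_0,t_0} > 0$. Because $\bX$ vanishes on the whole topological boundary, $(s_0,t_0) \in (0,1)^2$. Continuity then yields a closed rectangle $[u,u'] \times [v_1,v_2] \subset (0,1)^2$ containing $(s_0,t_0)$ in its interior on which $\bX^i \geq c$ for some $c > 0$; set $v' \coloneqq t_0 \in (v_1, v_2)$, so that $\bX^i_{s',v'} \geq c$ for every $s' \in [u,u']$.

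Finally, since $\bX^i$ is uniformly continuous on $[0,1]^2$ and $\bX^i_{s',0} = 0$, there is some $v \in (0,1)$ with $|\bX^i_{s',v}| < c/2$ for all $s' \in [0,1]$. With these choices,
\[
\square_R[A^{s,i}(\obX)] \;\geq\; (u' - u)\bigl( c - c/2 \bigr) \;>\; 0,
\]
which establishes the claim. There is no serious obstacle: the only point requiring care is the bookkeeping of signs in the 2D increment expansion; once the integral representation is obtained, the rest is a routine continuity argument exploiting that the boundary values of $\bX$ are zero.
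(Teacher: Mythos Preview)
Your proof is correct. Both you and the paper reduce to the simplified formula $A^{s,i}_{s,t}(\obX)=\int_0^s \bX^i_{s',t}\,ds'$ using the zero-boundary hypothesis, but the two arguments then diverge slightly. The paper observes that $\square_{0,u';0,v'}[A^{s,i}(\obX)]=A^{s,i}_{u',v'}(\obX)$ (since $A^{s,i}$ vanishes along the axes), picks $(u',v')$ so that this value is nonzero, and then perturbs the lower-left corner off the axes by continuity of the increment in $(u,v)$. You instead compute the increment explicitly as $\int_u^{u'}(\bX^i_{s',v'}-\bX^i_{s',v})\,ds'$ and build the rectangle directly: choose $v'$ at a point where $\bX^i$ is bounded below by $c$ on a horizontal segment, and choose $v$ small using uniform continuity and the vanishing of $\bX^i$ on the bottom edge so that the competing term is at most $c/2$. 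Your route is marginally more constructive (it exhibits a quantitative lower bound on the increment), while the paper's route avoids the explicit integrand computation by leaning on continuity of the map $(u,v)\mapsto \square_{u,u';v,v'}[A^{s,i}(\obX)]$; both are equally elementary.
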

\begin{proof}
    Because $\bX$ is nontrivial, there exists some $u', v' \in [0,1]$ and $i \in [d]$ such that
    \[
        A^{s,i}_{u',v'}(\obX) = \int_0^{u'} \bX^i_{s, v'} ds' \neq 0.
    \]
    In particular, this implies $\square_{0,u';0,v'}[A^{s,i}(\obX)] = A^{s,i}_{u',v'}(\obX) \neq 0$. Finally, since $\square_{u,u';v,v'}[A^{s,i}(\obX)]$ is continuous in $u$ and $v$, the result holds.
\end{proof}

This lemma implies that the linear map
\[
    \bX \mapsto (A^{s,1}(\obX), \ldots, A^{s,d}(\obX))
\]
is injective. Now, suppose that $\bX, \bY \in C^{\rho}_0([0,1]^2, \V)$ such that $\bdy \bX = \bdy \bY$, but $\bX \neq \bY$, and let $i \in [d]$ be the coordinate such that $A^{s,i}(\obX) \neq A^{s,i}(\obY)$. We will consider a matrix 2-connection $\con = (\cona, \conb, \conc) \in \cM^{0,m,p}$ such that the only nontrivial component of $\conc$ is $\conc^{s,i}$, so that $Z^\con(\obX)$ is
\begin{align} \label{eq:Z_restricted_2_connection}
    Z^\con_{s,t}(\obX) = \int_0^t \int_0^s F^{\cona}(\obx^{s',t'}) \conc^{s,i} F^{\conb}(\obx^{s',t'})^{-1} dA^{s,i}_{s',t'}(\obX).
\end{align}
In~\Cref{prop:2d_integral_characterization}, we show that if $A^{s,i}_{s,t}(\obX) \neq 0$ there exist some $a, b \in \N$ such that
\[
    \int_0^1 \int_0^1 s^a t^b dA^{s,i}_{s,t}(\obX) \neq 0.
\]
Thus, it remains to show that for any $a,b \in \N$, there exists a choice of 2-connection such that the integrand of~\Cref{eq:Z_restricted_2_connection} such that the integrand is $s^a t^b$. 

\begin{proposition} \label{prop:polynomial_integrals}
    For any $a, b \in \N$ and $i \in [d]$, there exists $\con = (\cona, \conb, \conc) \in \cM^{0,a+1,b+1}(\pV)$ with $\conc = \conc^{s,i} ds \wedge dx_i$ such that for any $\bX\in C^{\rho}_0([0,1]^2, \V)$, there exists an entry of
    \[
        T^\con(\bX) = F^{\cona}(\obx^{s,t})\cdot \conc^{s,i}\cdot F^{\conb}(\obx^{s,t})^{-1} \in \cmgl^{0,a+1,b+1}_2
    \]
    which is a polynomial with leading term $s^a t^b$.
\end{proposition}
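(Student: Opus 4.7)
The plan is to construct $\cona, \conb$ so that the path developments along $\obx^{s,t}$ become matrix exponentials of a single nilpotent shift each (and are independent of $\bX$), and then to take $\conc^{s,i}$ as a rank-one elementary matrix that isolates the monomial $s^a t^b$ in one entry of the triple product. Concretely, I would set $\cona^s := N_a$, the nilpotent $(a+1)\times(a+1)$ shift with ones on the superdiagonal, and let every other component of $\cona$ vanish; symmetrically, set $\conb^t := N_b$ and let every other component of $\conb$ vanish. Since $n = 0$ in $\cmgl^{0,a+1,b+1}$, the $A$, $B$, $D$ blocks of~\Cref{def:matrix_2connection} are absent, so the semiflat conditions collapse to $[C^i, C^j] = 0$ and $[E^i, E^j] = 0$, which hold trivially because only one $C^i$ and only one $E^i$ is nonzero. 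The map $\phi$ is also zero, hence $\cmb(\conc^{s,i}) = 0$ automatically, so any choice of $\conc^{s,i} \in \Mat_{a+1, b+1}$ yields a valid matrix 2-connection.

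The key observation is that since $\cona$ and $\conb$ vanish on all $\R^d$-directions of $\pV$, the developments $F^\cona(\obx^{s,t})$ and $G^\conb(\obx^{s,t})$ depend only on the $(s,t)$-coordinates of the tail path, and in particular are independent of $\bX$. Decomposing $\obx^{s,t}$ into its two segments from $(0,0,0)$ to $(0,t,\bX_{0,t})$ to $(s,t,\bX_{s,t})$, the first segment contributes trivially to $F^\cona$ (since $\cona^t = 0$) and the second contributes trivially to $G^\conb$ (since $\conb^s = 0$), giving $F^\cona(\obx^{s,t}) = \exp(s N_a)$ and $G^\conb(\obx^{s,t}) = \exp(t N_b)$, both universal polynomial matrices.

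Finally, taking $\conc^{s,i} := (-1)^b\, a!\, b! \cdot E_{a+1, 1}$, where $E_{a+1, 1}$ is the $(a+1) \times (b+1)$ matrix with a single $1$ in position $(a+1, 1)$, and using the standard identity $(\exp(u N_k))_{ij} = u^{j-i}/(j-i)!$ for $j \geq i$, a direct computation gives
\[
    \bigl(\exp(s N_a) \cdot \conc^{s,i} \cdot \exp(-t N_b)\bigr)_{1, b+1} = (-1)^b\, a!\, b! \cdot \frac{s^a}{a!} \cdot \frac{(-t)^b}{b!} = s^a t^b,
\]
which is the desired polynomial entry of $T^\con_{s,t}(\bX)$. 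There is essentially no technical obstacle; the only real insight is that setting $\cona$ and $\conb$ to vanish on the $\R^d$-directions decouples the path development from $\bX$, so that the same matrix 2-connection exhibits the monomial uniformly in $\bX \in C^{p-\cvar}([0,1]^2, \V)$.
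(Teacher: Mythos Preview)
Your proposal is correct and follows essentially the same approach as the paper: set only $\cona^s$ and $\conb^t$ nonzero so that the path developments become $\exp(s\,\cona^s)$ and $\exp(t\,\conb^t)$ independently of $\bX$, take $\conc^{s,i}$ to be a multiple of the elementary matrix $E_{a+1,1}$, and read off the $(1,b+1)$ entry of the triple product. The only difference is that the paper uses the strictly upper triangular matrix $U_{a+1,1}$ with ones everywhere above the diagonal (and $\conb^t=-U_{b+1,1}$), which yields $c\,a!\,b!\,E_a(s)E_b(t)=s^at^b+\text{lower order terms}$, whereas your standard superdiagonal shift $N_a$ gives the exact monomial $s^at^b$ with no remainder; this is a slight simplification, since the paper later has to argue separately that the lower-order terms integrate to zero.
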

\begin{proof}
    We choose $\cona$ and $\conb$ such that the only nonzero components are $\cona^s$ and $\conb^t$ respectively. Note that this satisfies the fake-flatness condition. Then, by definition of the tail path $\obx^{s,t}$, we have
    \[
        F^{\cona}(\obx^{s,t}) = \exp(\cona^s s) \quad  \andd \quad  F^{\conb}(\obx^{s,t}) = \exp(\conb^t t). 
    \]
    The entries of $F^{\cona}(\obx^{s,t})$ will be polynomials in $s$ if $\cona^s$ is chosen to be a strictly upper triangular matrix. Define the matrices $U_{m, k} \in \Mat_{m,m}$ which is $1$ above the $k^{th}$ upper diagonal, by
    \[
        [U_{m,k}]_{i,j} \coloneqq \left\{ \begin{array}{cl}
            1 & : j  \ge i+k \\
            0 &: \text{otherwise}.
        \end{array}\right.
    \]
    For example,
    \[
        U_{3,1} \coloneqq \pmat{0 & 1 & 1 \\ 0 & 0 & 1 \\ 0 & 0 & 0} \quad \andd \quad U_{3,2} \coloneqq \pmat{0 & 0 & 1 \\ 0 & 0 & 0 \\ 0 & 0 & 0}.
    \]
    Note that we have $U_{m,1}^k = U_{m,k}$ and $U_{m,k} = 0$ for $k \ge m$.
    Now, we set $\cona^s = U_{a+1,1}$, and we obtain
    \[
        F^{\cona}(\obx^{s,t}) = \exp(U_{a+1,1} s) = \sum_{k=0}^{a} \frac{U_{a+1,k} s^k}{k!}.
    \]
    We express this entry-wise as
    \[
        [\exp(U_{a+1,1} s)]_{i,j}\coloneqq \left\{ \begin{array}{cl}
            E_{j-i}(s) & : j  \ge i \\
            0 &: \text{otherwise}
        \end{array}\right.
        \quad \text{where} \quad
        E_k(s) \coloneqq \sum_{j=0}^k \frac{s^j}{j!}
    \]
    is the truncated Taylor series of the exponential.
    Thus, the $(1, a+1)$ entry of $F^{\cona}(\obx^{s,t})$ is $E_a(s)$ which has leading term $\frac{s^a}{a!}$. Similarly, we will choose $\conb^t = -U_{b+1,1}$ so that the $(1, b+1)$ entry of $F^{\conb}(\obx^{s,t})^{-1}$ is $E_b(t)$, which has leading term $\frac{t^b}{b!}$. 
    Finally, we choose $\conc^{s,i} \in \Mat_{a+1,b+1}$ to be the matrix with a $c a! b!$ in the $(a+1,1)$ entry, and $0$ elsewhere. By direct computation, we find that the $(1,b+1)$ entry has the desired polynomial,
    \[
        [F^{\cona}(\obx^{s,t})\cdot \conc^{s,i}\cdot F^{\conb}(\obx^{s,t})^{-1}]_{1,b+1} = ca! b! E_a(s) E_b(t) = c s^a t^b + \text{ lower degree terms}.
    \]
\end{proof}

\begin{proposition} \label{prop:injectivity_interior}
    Let $\bX, \bY \in C^{\rho}_{\zor}([0,1]^2, \V)$ such that $\bdy \bX = \bdy \bY$ and $\bX \neq \bY$. There exists a matrix 2-connection $\con = (\cona, \conb, \conc) \in \cM^{0,m,p}(\pV)$ such that
    \[
        H^{\con}(\obX) \neq H^{\con}(\obY).
    \]
\end{proposition}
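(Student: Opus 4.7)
The plan is to reduce everything to a scalar nonvanishing statement about the parametrized area process of $\bX-\bY$, and then realize that scalar integral as a single entry of $H^{\con}(\obX)-H^{\con}(\obY)$ by picking a carefully designed matrix $2$-connection in $\cM^{0,m,p}(\pV)$.

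First I would exploit the discussion opening the section: for any $\con\in\cM^{0,m,p}(\pV)$ the map $\phi$ is zero, so the surface development equation~\eqref{eq:msh_decomp1} collapses to $H^{\con}_{s,t}=Z^{\con}_{s,t}$. Hence it suffices to produce $\con$ with $Z^{\con}(\obX)\neq Z^{\con}(\obY)$. Restrict further to connections whose only nonvanishing component of $\conc$ is a single $\conc^{s,i}$ (as in~\eqref{eq:Z_restricted_2_connection}), so that $Z^{\con}$ depends only on the scalar increment measure $dA^{s,i}$ paired against the matrix-valued integrand $T^{\con}_{s,t}(\obX)=F^{\cona}(\obx^{s,t})\,\conc^{s,i}\,G^{\conb}(\obx^{s,t})^{-1}$. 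Because $A^{s,i}$ is linear in the surface and $\obx^{s,t}$ depends only on the boundary of $\bX$ (equivalently $\bY$, by hypothesis $\partial\bX=\partial\bY$), one has
\[
 Z^{\con}(\obX)-Z^{\con}(\obY)=\int_0^1\!\!\int_0^1 T^{\con}_{s,t}(\obX)\, dA^{s,i}_{s,t}(\overline{\bX-\bY}).
\]

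Next, since $\bdy\bX=\bdy\bY$, the difference $\bX-\bY$ lies in $C^{p-\cvar}_{\zbdy}([0,1]^2,\V)$, and by assumption it is nonzero. Lemma~\ref{lem:parametrized_area_nontrivial} then yields an index $i\in[d]$ and a rectangle with nonvanishing $2$D increment of $A^{s,i}(\overline{\bX-\bY})$, so the $2$D measure $dA^{s,i}(\overline{\bX-\bY})$ is a nontrivial Young measure on $[0,1]^2$. Applying the forthcoming Proposition~\ref{prop:2d_integral_characterization}, there exist nonnegative integers $a,b\in\N$ such that the scalar moment
\[
  \int_0^1\!\!\int_0^1 s^{a} t^{b}\, dA^{s,i}_{s,t}(\overline{\bX-\bY}) \neq 0.
\]

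Finally, I invoke Proposition~\ref{prop:polynomial_integrals} with these exponents $a,b$ and this index $i$: it supplies a $2$-connection $\con=(\cona,\conb,\conc)\in\cM^{0,a+1,b+1}(\pV)$ whose $(1,b{+}1)$ entry of $T^{\con}_{s,t}$ equals a polynomial in $(s,t)$ with leading term $s^a t^b$ and all other monomials of strictly lower total degree. Reading off that one matrix entry and using linearity of the $2$D Young integral in the integrand, I get
\[
  [Z^{\con}(\obX)-Z^{\con}(\obY)]_{1,b+1}
  = \sum_{(a',b')\preceq(a,b)} c_{a',b'}\int_0^1\!\!\int_0^1 s^{a'}t^{b'}\, dA^{s,i}_{s,t}(\overline{\bX-\bY}),
\]
with $c_{a,b}\neq 0$. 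The main (and only genuine) obstacle is choosing $(a,b)$ so that the sum is nonzero despite lower-order contamination: I would select the pair $(a,b)$ minimal (in the product order, or by total degree) among those for which the scalar moment is nonzero, so that all lower-order moments appearing in the sum vanish and the leading coefficient $c_{a,b}$ survives. This produces an entry of $H^{\con}(\obX)-H^{\con}(\obY)$ that is nonzero, completing the proof.
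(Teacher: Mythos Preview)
Your approach is essentially identical to the paper's: reduce to $H^{\con}=Z^{\con}$ for $\con\in\cM^{0,m,p}$, use linearity of $A^{s,i}$ to pass to the difference surface, invoke Proposition~\ref{prop:2d_integral_characterization} to find a nonvanishing monomial moment, choose a minimal such $(a,b)$ so lower-order terms integrate to zero, and realize the resulting scalar as an entry of $Z^{\con}$ via Proposition~\ref{prop:polynomial_integrals}.

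There is one incorrect justification you should fix. You assert that ``$\obx^{s,t}$ depends only on the boundary of $\bX$'' in order to conclude $T^{\con}_{s,t}(\obX)=T^{\con}_{s,t}(\obY)$. This is false: the tail path $\obx^{s,t}$ is the image of $(0,0)\to(0,t)\to(s,t)$ under $\obX$, and its second leg runs through the interior of the surface, so in general it differs from $\oby^{s,t}$ even when $\partial\bX=\partial\bY$. The correct reason the integrand agrees is specific to the connection produced by Proposition~\ref{prop:polynomial_integrals}: there $\cona$ has only $\cona^s$ nonzero and $\conb$ has only $\conb^t$ nonzero, so $F^{\cona}(\obx^{s,t})=\exp(s\,\cona^s)$ and $G^{\conb}(\obx^{s,t})=\exp(t\,\conb^t)$ depend only on the parametrization coordinates $(s,t)$ and not on $\bX$ at all. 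The paper states this explicitly (``This integrand is independent of $\bX$, since it uses only the parametrization''). Thus your displayed identity for $Z^{\con}(\obX)-Z^{\con}(\obY)$ holds only after you have chosen the connection from Proposition~\ref{prop:polynomial_integrals}, not for a generic $\con\in\cM^{0,m,p}$ with a single $\conc^{s,i}$; rearrange the argument accordingly. On the minimality point, product order is the correct choice (the monomials appearing in $E_a(s)E_b(t)$ are exactly those with $a'\le a$ and $b'\le b$), and a product-order-minimal pair exists by well-foundedness of $\N^2$.
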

\begin{proof}
    By~\Cref{lem:parametrized_area_nontrivial}, there exists some $i \in [d]$ such that $A^{s,i}(\overline{\bX - \bY}) : [0,1]^2 \to \R$ is nontrivial. Then by~\Cref{prop:2d_integral_characterization}, there exists some $a, b \in \N$ such that
    \begin{align} \label{eq:inj_integral_assumption}
        \int_0^1 \int_0^1 s^a t^b dA^{s,i}_{s,t}(\overline{\bX - \bY}) \neq 0.
    \end{align}
    We choose the lowest degree $s^a t^b$ such that this is true.
    Next, by~\Cref{prop:polynomial_integrals}, there exists a matrix 2-connection $\con \in \cM^{0, a+1, b+1}(\pV)$ such that
    \[
        [F^{\cona}(\obx^{s,t}) \cdot\conc^{s,i} \cdot F^{\conb}(\obx^{s,t})^{-1}]_{1,b+1} = c s^a t^b + \text{ lower degree terms}.
    \]
    This integrand is independent of $\bX$, since it uses only the parametrization. Thus, we have
    \[
        [Z^\con(\obX) - Z^\con(\obY)]_{1,b+1} = \int_0^1 \int_0^1 s^a t^b dA^{s,i}_{s,t}(\overline{\bX - \bY}) \neq 0,
    \]
    where the lower degree terms integrate to $0$ by the assumption that $a,b \in \N$ are chosen where $s^a t^b$ is the lowest degree term such that~\Cref{eq:inj_integral_assumption} holds. Then, since $H^\con(\obX) = Z^\con(\obX)$ for $\con \in \cM^{0,m,p}$, we obtain the desired result.
\end{proof}

\section{Characteristicness and Random Surfaces} \label{sec:characteristicness}

In this section, we construct the desired characteristic functions for surface-valued probability measures.
We will show that expectations of surface development can be used to characterize the law of such measures using the duality between universality and characteristicness.

\begin{definition}
    Suppose $\cX$ is a topological space, and let $\cF \subset \F^\cX$ be a topological vector space where $\F = \R$ or $\C$, such that the topological dual $\cF'$ is a space of measures. Let $\cF_0 \subset \cF$ be another topological vector space. 
    \begin{enumerate}
        \item The set $\cF_0$ is \emph{universal} with respect to $\cF$ if $\cF_0$ is dense in $\cF$.
        \item The set $\cF_0$ is \emph{characteristic} with respect to $\cP \subset \cF'$ if the map $\cE: \cP \to \cF_0'$, defined by
        \[
            \cE(\mu)(f) = \int f(x) d\mu(x)
        \]
        is injective. 
    \end{enumerate}
\end{definition}

The duality of these two properties have been studied from the language of feature maps~\cite[Theorem 7]{chevyrev_signature_2022} and kernels~\cite[Theorem 6]{simon-gabriel_kernel_2018} in the machine learning literature. The proof of the following theorem follows exactly from~\cite[Theorem 6]{simon-gabriel_kernel_2018}.

\begin{theorem}{\cite[Theorem 6]{simon-gabriel_kernel_2018}} \label{thm:duality}
    Suppose $\cF$ is a locally convex topological vector space. Then $\cF_0$ is universal to $\cF$ if and only if $\cF_0$ is characteristic to $\cF'$. 
\end{theorem}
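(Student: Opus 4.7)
The plan is to observe that this is a direct Hahn--Banach duality once we identify $\cE$ with the restriction map. Concretely, the pairing $\langle \mu, f\rangle \coloneqq \int f\, d\mu$ realizes elements of $\cF'$ as continuous linear functionals on $\cF$, and so the map $\cE: \cF' \to \cF_0'$ from the definition is nothing but the pullback along the inclusion $\iota: \cF_0 \hookrightarrow \cF$, namely $\cE(\mu) = \mu \circ \iota = \mu|_{\cF_0}$. The statement then becomes: $\cF_0$ is dense in $\cF$ if and only if the restriction map $\cF' \to \cF_0'$ is injective.

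For the forward direction I would argue by continuity. Assume $\cF_0$ is dense and suppose $\mu,\nu \in \cF'$ satisfy $\cE(\mu) = \cE(\nu)$, i.e.\ $\mu-\nu$ vanishes on $\cF_0$. Since $\mu-\nu$ is continuous on $\cF$ and agrees with $0$ on the dense subspace $\cF_0$, it agrees with $0$ on all of $\cF$, giving $\mu = \nu$ and hence injectivity of $\cE$.

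For the converse I would use Hahn--Banach, which is the only place local convexity is needed. Assume $\cF_0$ is not dense; then its closure $\overline{\cF_0}$ is a proper closed subspace of the locally convex space $\cF$, so we may pick $f_0 \in \cF \setminus \overline{\cF_0}$. By a geometric Hahn--Banach separation theorem (valid in any locally convex space), there exists a continuous linear functional $\mu \in \cF'$ with $\mu|_{\overline{\cF_0}} = 0$ but $\mu(f_0) \neq 0$. In particular $\mu \neq 0$ yet $\cE(\mu) = \mu|_{\cF_0} = 0 = \cE(0)$, so $\cE$ fails to be injective, contradicting characteristicness.

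The argument is classical and there is no real obstacle beyond being careful that the sentence ``$\cF'$ is a space of measures'' in the theorem setup simply means we identify elements of the topological dual with (signed/complex) measures via the pairing $f \mapsto \int f\, d\mu$, so that ``$\cE$ is injective'' and ``the restriction map is injective'' are literally the same statement. The use of local convexity is essential only in the converse direction: without it, Hahn--Banach separation of a point from a closed proper subspace can fail, and the equivalence breaks down. I would briefly flag this, and then refer to \cite[Theorem 6]{simon-gabriel_kernel_2018} for the version using kernel feature maps, which is what the paper cites.
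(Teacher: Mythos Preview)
Your argument is correct and is precisely the standard Hahn--Banach duality proof: identify $\cE$ with restriction along $\cF_0 \hookrightarrow \cF$, use continuity for the forward direction, and Hahn--Banach separation of a point from a proper closed subspace for the converse. The paper itself does not spell out a proof but simply defers to \cite[Theorem~6]{simon-gabriel_kernel_2018}, whose argument is exactly the one you have written; so your proposal matches the intended approach.
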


As discussed for the case of paths in~\cite{chevyrev_signature_2022,cuchiero_global_2023}, one of the difficulties in proving universality is the fact that $C^{\rho}_{\zor}([0,1]^2, \V)$ is not locally compact, and thus standard Stone-Weierstrass arguments do not apply. Following~\cite{chevyrev_signature_2022}, we will rectify this issue by considering strict topologies. \medskip

For a metric space $\cX$, let $C_b(\cX, \C) \label{pg:Cb}$ denote the space of continuous bounded functions on $\cX$ valued in $\C$. Further, let $B_0(\cX)$ denote the space of functions $\theta : \cX \to \R$ which \emph{vanish at infinity}; ie.~for all $\epsilon > 0$, there exists a compact $K_\epsilon \subset \cX$ such that $\sup_{x \in \cX \backslash K_\epsilon} |\theta(x)| < \epsilon$. 

\begin{definition} \label{def:strict_topology}
    The \emph{strict topology} on $C_b(\cX, \C)$ is the topology generated by the seminorms
    \[
        p_\theta(f) = \sup_{x \in \cX} |f(x) \theta(x)|
    \]
    for all $\theta \in B_0(\cX)$. 
\end{definition}

\begin{theorem}\label{thm:strict_topology}
    Let $\cX$ be a metric space.
    \begin{enumerate}
        \item \label{itm:strict1} Let $\cF_0$ be a subalgebra of $C_b(\cX, \C)$ such that for all $x \neq y \in \cX$, there exists $f \in \cF_0$ such that $f(x) \neq f(y)$, and for all $x \in \cX$ there exists $f \in \cF_0$ such that $f(x) \neq 0$, then $\cF_0$ is dense in $C_b(\cX, \C)$ under the strict topology.
        \item \label{itm:strict2} The topological dual of $C_b(\cX, \C)$ equipped with the strict topology is the space of complex Radon measures on $\cX$.
    \end{enumerate}
\end{theorem}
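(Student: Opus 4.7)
The result consists of two classical facts about the Buck--Sentilles strict topology, and my plan follows the standard route while flagging the points that need care in our metric-space setting.

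For (\ref{itm:strict1}), the plan is to localize the classical Stone--Weierstrass theorem using the decay of $\theta$. Given $f \in C_b(\cX, \C)$, $\theta \in B_0(\cX)$ and $\epsilon > 0$, I would first pick a compact $K \subset \cX$ on which $|\theta|$ essentially concentrates, so that $\sup_{x \notin K} |\theta(x)| (\|f\|_\infty + 1) < \epsilon/2$. The restriction $\cF_0|_K$ is a subalgebra of $C(K, \C)$ that separates points and vanishes nowhere, so the classical complex Stone--Weierstrass theorem (together with the implicit self-adjointness of $\cF_0$ coming from its intended application to matrix coefficients of surface development) yields $g \in \cF_0$ with $\sup_K |f - g|$ as small as needed, controlling $p_\theta(f-g)$ on $K$. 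The subtle step is controlling $p_\theta(f - g)$ off $K$, since a priori $g$ is large there; this is handled either by multiplying $g$ by a second element of $\cF_0$ that decays outside a slightly larger compactum, or by arranging the Stone--Weierstrass approximation so that the global sup-norm of $g$ is controlled by $\|f\|_\infty$ plus a constant, which then interacts harmlessly with the smallness of $|\theta|$ outside $K$.

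For (\ref{itm:strict2}), the plan proceeds in three steps. First, in the easy direction, any complex Radon measure $\mu$ of finite total variation defines a strictly continuous functional: for given $\epsilon$, regularity produces a compact $K$ with $|\mu|(\cX \setminus K) < \epsilon$, and then $|\int f \, d\mu|$ is dominated by $p_{\mathbf{1}_K}(f)$ up to $\epsilon \|f\|_\infty$, which suffices for continuity at the origin in the strict topology. Second, for the converse, I restrict a strictly continuous functional $L$ to the subspace $C_0(\cX, \C) \subset C_b(\cX, \C)$ of continuous functions vanishing at infinity; on this subspace the strict topology coincides with the uniform topology, because for $f \in C_0$ one can choose $\theta \in B_0(\cX)$ bounded below by a positive constant on a compactum outside of which $f$ is small, and this yields $p_\theta(f) \asymp \|f\|_\infty$. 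Hence $L|_{C_0}$ is uniformly continuous, and classical Riesz--Markov produces a complex Radon measure $\mu$ with $L(f) = \int f \, d\mu$ on $C_0$. Third, I would show that $C_0(\cX, \C)$ is strictly dense in $C_b(\cX, \C)$ by cutting off $f \in C_b$ with a continuous bump $\varphi \in C_0$ equal to $1$ on a prescribed compact $K$; then $p_\theta(f - \varphi f) \leq \|f\|_\infty \sup_{x \notin K} |\theta(x)|$, which is small for $K$ large, and continuity of both $L$ and $\mu \mapsto \int \cdot \, d\mu$ in the strict topology forces them to agree on all of $C_b(\cX, \C)$.

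The main obstacle I anticipate is calibrating the precise structure of $B_0(\cX)$: it must be rich enough that strict density yields the approximations used in Part 2, yet not so rich that the dual becomes larger than $M(\cX)$. For metric (hence completely regular) $\cX$ this balance is classical, and one may either carry out the argument above directly or deduce the theorem as a specialization of the results of Buck and Sentilles on strict topologies for bounded continuous functions on completely regular spaces. A secondary delicate point is the self-adjointness hypothesis in Part 1, which in our application is guaranteed by taking $\cF_0$ to consist of matrix coefficients of surface developments into unitary-type groups, so the complex Stone--Weierstrass step is legitimate.
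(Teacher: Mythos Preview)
The paper does not actually prove this theorem: it simply cites Giles, Summers, Rooij, and Bogachev. So your proposal is not being compared against a proof but against a list of references, and what matters is whether your sketch is correct in the generality stated.

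Your outline for Part~(\ref{itm:strict1}) is essentially the Giles--Summers argument, and the two issues you flag (self-adjointness, and sup-norm control of the approximant off $K$) are exactly the ones that need work; neither is fatal. The self-adjointness point is a genuine omission in the theorem statement as written, and you are right that in the intended application it holds because $\overline{\exp(i\langle \ell, H^{\con}(\cdot)\rangle)} = \exp(i\langle -\ell, H^{\con}(\cdot)\rangle)$.

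Part~(\ref{itm:strict2}), however, has a real gap. Your route goes through $C_0(\cX,\C)$ and classical Riesz--Markov, then extends by strict density. This only works when $\cX$ is locally compact. For the spaces relevant to the paper --- $\cX = C^{p-\cvar}_0([0,1]^2,\V)$, an infinite-dimensional metric space --- compact sets have empty interior, and hence $C_0(\cX,\C) = \{0\}$: if $f \in C_0$ and $f(x_0) \neq 0$, continuity gives a whole open set inside $\{|f| \geq |f(x_0)|/2\}$, contradicting compactness of the latter. So the restriction $L|_{C_0}$ carries no information, Riesz--Markov produces only the zero measure, and your density step cannot recover $L$. Your cutoff construction in the third step fails for the same reason: there is no continuous $\varphi$ equal to $1$ on $K$ with $\{\varphi \geq \epsilon\}$ compact.

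The correct argument in this generality, which is what the paper's citations point to, avoids $C_0$ entirely. One shows directly that a strictly continuous functional is \emph{tight} in the sense that its restriction to each norm-ball is continuous for the topology of uniform convergence on compacta (this uses that the strict topology and the compact-open topology agree on norm-bounded sets), and then invokes the representation of tight functionals by Radon measures on completely regular spaces, as in Bogachev. If you want to produce a self-contained proof rather than citations, that is the structure to aim for.
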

\begin{proof}
    The real version of this theorem is proved in~\cite{giles_generalization_1971}. \Cref{itm:strict1} is obtained from~\cite[Theorem 4.3]{summers_general_1971}. For~\Cref{itm:strict2}, the topological dual of $C_b(\cX, \C)$ is shown to be the space of tight linear functionals in~\cite{rooij_tight_1967}, which are equivalent to Radon measures for completely regular spaces (see the proof of~\cite[Theorem 7.10.6]{bogachev_measure_2006}). 
\end{proof}

\subsection{Universality and Characteristicness for Unparametrized Surfaces}
We begin by considering characteristic functions for unparametrized (thin homotopy classes of) surfaces in $\thinsurface^{0,\rho}(\V)$.
Recall from~\Cref{def:matrix_2connection} that $\cM^{n,m,p}(\V)$ denotes the space of matrix 2-connections on $\V$ valued in $\cmgl^{n,m,p}$. In order to reduce the number of parameters, we will only consider connections in $\cM^n(\pV) \coloneqq \cM^{n,n,n}(\pV)$, since $\cmgl^{n,m,p}$ embeds in $\cmgl^N \coloneqq \cmgl^{N,N,N}$ for all $n,m,p \leq N \label{pg:A_not_res}$ (see~\Cref{apxsec:direct_sums}).
We define the \emph{space of surface development functions} by
\begin{equation}
    \cA \coloneqq \bigg\{ \sum_{k=1}^r c_k \exp\left( \im \langle \ell_k, H^{\con_k}(\overline{\,\cdot\,}) \rangle \right)  \, : \, c_k \in \C,\, \con_k \in \cM^{n_k}(\V), \,\ell_k \in \Mat_{2n_k, 2n_k}\bigg\}
\end{equation}
where 
\[
    \bX \mapsto \sum_{k=1}^r c_k \exp\left( \im \langle \ell_k, H^{\con_k}(\obX) \rangle \right) : \thinsurface^{0,\rho}(\V) \to \C. 
\]
Note in particular that $\cA \subset C_b(\thinsurface^{0,\rho}(\V), \C)$. 
\begin{remark} \label{rem:failure_alg_structure}
    In the 1D setting of path development, one can directly consider products of linear functionals by using tensor products of the path holonomies,
    \[
        \langle \ell_1, F^{\cona_1}(\bx)\rangle \cdot \langle \ell_2, F^{\cona_2}(\bx) \rangle = \langle \ell_1\otimes \ell_2, F^{\cona_1 \otimes I + I \otimes \cona_2}(\bx)\rangle. 
    \]
    However, due to the structure of tensor products of 2-vector spaces, we cannot generalize this property (see~\Cref{apxsec:2_tensor_product} for further details) which is why we use the more complex function class $\cA$.
\end{remark}

In order to deal with the separation property, we will define a new equivalence class,
\begin{align}
    \wthinsurface^{0,\rho}(\V) \coloneqq \thinsurface^{0,\rho}(\V)/\sim_{\text{he}} \quad \text{where} \quad \bX \sim_{\text{he}} \bY \quad \text{if} \quad f(\bX) = f(\bY) \text{ for all } f \in \cA.
\end{align}

\begin{theorem} \label{thm:univ_char_thinhom}
    The function space $\cA$ defined above has the following properties.
    \begin{enumerate}
        \item \textbf{(Universality)} The space $\cA$ is dense in $C_b(\wthinsurface^{0,\rho}(\V), \C)$ with the strict topology.
        \item \textbf{(Characteristicness)} Given two probability measures $\mu, \nu \in \cP(\wthinsurface^{0,\rho}(\V))$ such that $\mu \neq \nu$, there exists some $\con \in \cM^n(\V)$ and $\ell \in \Mat_{2n,2n}$ such that
        \begin{align} \label{eq:measures_not_equal}
            \E_{\bX \sim \mu}\Big[ \exp\left( \im \langle \ell, H^{\con}(\bX) \rangle \right)\Big] \neq \E_{\bY \sim \nu}\Big[ \exp\left( \im \langle \ell, H^{\con}(\bY) \rangle \right)\Big].
        \end{align}
    \end{enumerate}
\end{theorem}
\begin{proof}
    First, we note that $\cA$ is a subalgebra of $C_b(\wthinsurface^{0,\rho}(\V), \C)$. Indeed, let $\con_1 \in \cM^{n_1}(\V)$, $\con_2 \in \cM^{n_2}(\V)$, and $\ell_1 \in \Mat_{2n_1,2n_1}$, $\ell_2 \in \Mat_{2n_2,2n_2}$. Then using~\Cref{lem:direct_sum_sh}, we have
    \begin{align*}
        \exp\left( \im \langle \ell_1, H^{\con_1}(\bX) \rangle \right) \cdot \exp\left(\im \langle \ell_2, H^{\con_2}(\bX) \rangle \right) = \exp \left( \im \left\langle \pmat{\ell_1 & 0 \\ 0 & \ell_2}, H^{\con_1 \oplus \con_2}(\bX) \right\rangle \right),
    \end{align*}
    where $\con_1 \oplus \con_2$ is defined in~\Cref{apxsec:direct_sums}.
    Furthermore, $\cA$ contains the constant functions and separates points by definition of $\wthinsurface^{0,\rho}(\V)$. 
    Thus, by the strict Stone-Weierstrass theorem (\Cref{thm:strict_topology}), the first statement holds. Then, by duality between universality and characteristicness from~\Cref{thm:duality}, we know that $\cA$ is characteristic with respect to the dual of $C_b(\wthinsurface^{0,\rho}(\V), \C)$, which is the space of complex Radon measures (\Cref{thm:strict_topology}). Because this contains the space of probability measures, there exists some $f \in \cA$ such that $\E_{\bX \sim \mu}[f(\bX)] \neq \E_{\bY \sim \nu}[f(\bY)]$. 
\end{proof}

While the fact that path development separates thin homotopy classes is well-known in the case of paths~\cite{chevyrev_characteristic_2016,boedihardjo_signature_2016}, it has only recently been proven in the setting of piecewise linear surfaces~\cite{bischoff_thin_2025}. Let $\thinsurface^{\text{PL}}(\V)$ denote the thin homotopy classes of piecewise linear surfaces. Then, the characterization of thin homotopy classes in~\cite[Theorem 6.26]{bischoff_thin_2025} together with the universal property of the surface signature in~\cite[Theorem 4.29]{lee_surface_2024}, we obtain the following corollary.

\begin{corollary}
    Given two probability measures $\mu, \nu \in \cP(\thinsurface^{\text{PL}}(\V))$ such that $\mu \neq \nu$, there exists some $\con \in \cM^n(\V)$ and $\ell \in \Mat_{2n,2n}$ which separates the measures~\Cref{eq:measures_not_equal}.
\end{corollary}

\subsection{Universality and Characteristicness for Parametrized Surfaces}

Next, we will discuss characteristic functions for parametrized surfaces, where we now consider connections on $\pV$. Note that the space of 2-connections $\cM^n(\pV)$ is not a linear space, due to the quadratic fake-flatness conditions from~\Cref{def:matrix_2connection},
\begin{align} \label{eq:quadratic_semiflat_conditions}
    [C^i, C^j] = 0 \quad \andd \quad [E^i, E^j] = 0
\end{align}
for the $C$ and $E$ blocks of the 2-connection. In order to further reduce the number of connections we must consider, we define a restricted \emph{linear} space of 2-connections. Using the block notation of~\Cref{def:matrix_2connection}, we define
\begin{align}
    \cM_{\res}^n(\pV) \coloneqq \Big\{(\cona, \conb, \conc) \in \cM^n(\pV) \, : \, C^i = 0,\,  E^j =0 \text{ for }  i \in [d] \cup\{t\}, \, j \in [d] \cup \{ s\} \Big\}.
\end{align}
In particular, these are 2-connections where the only nonzero $C \label{pg:cM_res}$ block of $\cona$ is $C^s$, and the only nonzero $E$ block of $\conb$ is $E^t$; thus the quadratic conditions of~\Cref{eq:quadratic_semiflat_conditions} are  satisfied. Furthermore, $\cM_{\res}^n(\pV)$ is a linear space, which can be written as
\begin{align} \label{eq:restricted_matrix_2_connections}
    \cM_{\res}^n(\pV) \cong \underbrace{\Mat_{n,n}^{d+2}}_{A^i} \oplus \underbrace{\Mat_{n,n}^{d+2}}_{B^i} \oplus \underbrace{\Mat_{n,n}}_{C^s} \oplus \underbrace{\Mat_{n,n}^{d+2}}_{D^i} \oplus \underbrace{\Mat_{n,n}}_{E^t} \oplus \underbrace{\Mat_{n,n}^{\binom{d+2}{2}}}_{U^{i,j}}.
\end{align}
Note that the 2-connections required for~\Cref{thm:sh_injectivity} are contained in $\cM^n_{\res}(\pV) \label{pg:A_res}$. We define the restricted space of surface development functions by
\begin{equation}
    \cA_{\res} \coloneqq \bigg\{ \sum_{k=1}^r c_k \exp\left( \im \langle \ell_k, H^{\con_k}(\overline{\,\cdot\,}) \rangle \right)  \, : \, c_k \in \C,\, \con_k \in \cM^{n_k}_{\res}(\pV), \,\ell_k \in \Mat_{2n_k, 2n_k}\bigg\}
\end{equation}
where 
\[
    \bX \mapsto \sum_{k=1}^r c_k \exp\left( \im \langle \ell_k, H^{\con_k}(\obX) \rangle \right) : C^{\rho}_{\zor}([0,1]^2, \V) \to \C. 
\]

\begin{theorem} \label{thm:univ_char}
    The function space $\cA_{\res}$ defined above has the following properties.
    \begin{enumerate}
        \item \textbf{(Universality)} The space $\cA_{\res}$ is dense in $C_b(C^{\rho}_0([0,1]^2, \V), \C)$ with the strict topology.
        \item \textbf{(Characteristicness)} Given two probability measures $\mu, \nu \in \cP(C^{\rho}_0([0,1]^2, \V))$ such that $\mu \neq \nu$, there exists some $\con \in \cM^n_{\res}(\pV)$ and $\ell \in \Mat_{2n,2n}$ such that
        \[
            \E_{\bX \sim \mu}\Big[ \exp\left( \im \langle \ell, H^{\con}(\overline{\bX}) \rangle \right)\Big] \neq \E_{\bY \sim \nu}\Big[ \exp\left( \im \langle \ell, H^{\con}(\overline{\bY}) \rangle \right)\Big].
        \]
    \end{enumerate}
\end{theorem}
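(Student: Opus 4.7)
The plan is to deduce both assertions from \Cref{thm:strict_topology} (a Stone--Weierstrass theorem for the strict topology) together with the universality/characteristicness duality of \Cref{thm:duality}. For universality, it suffices to verify that $\cA_{\res}$ is a subalgebra of $C_b(C^{p-\cvar}_{\zor}([0,1]^2, \V), \C)$ which is self-conjugate, separates points, and is non-vanishing. For characteristicness, the topological dual of $C_b$ under the strict topology is the space of complex Radon measures (\Cref{thm:strict_topology}, part 2); since any probability measure on the metric space $C^{p-\cvar}_{\zor}([0,1]^2, \V)$ is tight and hence Radon on its separable support, the duality of \Cref{thm:duality} converts universality directly into the desired characteristicness statement for probability measures.

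The subalgebra check decomposes into three pieces. Closure under complex linear combinations is by definition, and closure under complex conjugation follows from $\overline{\exp(\im \langle \ell, H^{\con}(\obX)\rangle)} = \exp(\im \langle -\ell, H^{\con}(\obX)\rangle)$. The key step is closure under pointwise multiplication. Given $\con_k \in \cM^{n_k}_{\res}(\pV)$ for $k=1,2$, I will form the direct sum 2-connection $\con_1 \oplus \con_2 \in \cM^{n_1+n_2}_{\res}(\pV)$: the block structure defining $\cM^n_{\res}(\pV)$ in \Cref{eq:restricted_matrix_2_connections} is preserved under direct sums, and the semiflatness conditions hold summand by summand. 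Using the compatibility of surface development with direct sums of matrix double groups (\Cref{apxsec:direct_sums}), one obtains the block-diagonal identity $H^{\con_1 \oplus \con_2}(\obX) = H^{\con_1}(\obX) \oplus H^{\con_2}(\obX)$, so that
\begin{align*}
    \exp(\im \langle \ell_1, H^{\con_1}(\obX)\rangle) \cdot \exp(\im \langle \ell_2, H^{\con_2}(\obX)\rangle)
    = \exp(\im \langle \ell_1 \oplus \ell_2, H^{\con_1 \oplus \con_2}(\obX)\rangle) \in \cA_{\res}.
\end{align*}

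Non-vanishing is immediate by taking $\ell = 0$, which yields the constant function $1$. Point separation is where \Cref{thm:sh_injectivity} is invoked: for $\bX \neq \bY$ in $C^{p-\cvar}_{\zor}$, there exists a matrix 2-connection $\con \in \cM^{n,m,p}(\pV)$ with $H^{\con}(\obX) \neq H^{\con}(\obY)$. Inspecting the proofs of \Cref{prop:injectivity_boundary} (which only uses $\cM^{n,0,0}$ with $\conb = \cona$, where the $C$ blocks vanish entirely) and \Cref{prop:injectivity_interior} (which uses $\cM^{0,m,p}$ with strictly upper-triangular $C^s = U_{a+1,1}$ and $E^t = -U_{b+1,1}$ and no other $C$ or $E$ blocks), both connections embed canonically into $\cM^N_{\res}(\pV)$ for $N = \max(n,m,p)$; this is precisely why the \emph{restricted} space was defined. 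With $H^{\con}(\obX) \neq H^{\con}(\obY)$, choosing $\ell$ to be a matrix unit detecting a coordinate where they differ, and rescaling by a small real factor to stay inside a fundamental domain of the complex exponential, separates $\bX$ and $\bY$ through an element of $\cA_{\res}$.

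Assembling these ingredients, \Cref{thm:strict_topology}(1) yields universality of $\cA_{\res}$ in $C_b(C^{p-\cvar}_{\zor}([0,1]^2, \V), \C)$ under the strict topology, proving (1). Then \Cref{thm:duality} combined with the identification of the strict dual as complex Radon measures upgrades this to characteristicness; restricting the resulting injectivity to probability measures gives (2). The main technical obstacle I foresee is the careful verification of the direct-sum step: one must check that the block-diagonal 2-connection stays inside $\cM^n_{\res}(\pV)$ (straightforward since the vanishing-block constraints are preserved coordinate-wise), and that the compatibility $H^{\con_1 \oplus \con_2} = H^{\con_1} \oplus H^{\con_2}$ genuinely follows from the Young surface development equation \Cref{eq:young_sh}. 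The latter reduces to noting that the integrand $T^{\con}(\bX)$, area process, and the left-multiplication driver in the controlled differential equation all respect the block-diagonal decomposition, so solving the CDE summand-wise yields the claimed identity.
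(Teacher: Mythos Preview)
Your proposal is correct and follows essentially the same approach as the paper: verify that $\cA_{\res}$ is a point-separating, non-vanishing subalgebra (using direct sums of restricted 2-connections via \Cref{lem:direct_sum_sh} for closure under multiplication, and \Cref{thm:sh_injectivity} together with the observation that the separating connections from \Cref{prop:injectivity_boundary} and \Cref{prop:injectivity_interior} lie in $\cM^n_{\res}(\pV)$), then apply the strict Stone--Weierstrass theorem and the universality/characteristicness duality. Your treatment is in fact slightly more careful than the paper's in two places: you explicitly check closure under complex conjugation, and you spell out how to pass from $H^{\con}(\obX) \neq H^{\con}(\obY)$ to separation by an element of $\cA_{\res}$ via a suitably rescaled matrix unit.
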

\begin{proof}
    This proof is largely the same as~\Cref{thm:univ_char_thinhom}.
    First, we note that $\cA_{\res}$ is a subalgebra of $C_b(C^{\rho}_0([0,1]^2, \V), \C)$ by~\Cref{lem:direct_sum_sh}. 
    Furthermore, $\cA_{\res}$ contains the constant functions and this separates points by~\Cref{thm:sh_injectivity}; in particular the 2-connections from~\Cref{prop:injectivity_boundary} and~\Cref{prop:injectivity_interior} are contained in $\cM_\res$. 
    The first statement holds by~\Cref{thm:strict_topology} and the second statement holds by~\Cref{thm:duality}.
\end{proof}

\begin{remark}
    Another possible approach to obtain an algebra structure for surface development is to consider the polynomial functions generated $\langle \ell, H^\con(\obX)\rangle$ for all $\con \in \cM^n_{\res}(\R^{d+2})$ and $\ell \in \cmgl_1^n$, with $n \in \N$. Then, one can consider weighted topologies on function spaces, and follow the methodology in~\cite{cuchiero_global_2023} to obtain characteristic functions, though this approach will restrict to laws of random surfaces which satisfy certain moment conditions. 
\end{remark}

As an immediate application, we can characterize the law of fractional Brownian sheets.

\begin{definition} \label{def:fbs}
    The \emph{fractional Brownian sheet} $B : [0,1]^2 \to \R$ parametrized by Hurst parameter $h \in (0,1)$ is a Gaussian field on $[0,1]^2$ with $B_{s,0} = B_{0,t} = 0$ and with covariance function
    \[
        Q^h(s_1, t_1, s_2, t_2) \coloneqq \E \Big[ B_{s_1, t_1} \cdot  B_{s_2, t_2}\Big] = \left( s_1^{2h} + t_1^{2h} - |s_1 - t_1|^{2h} \right) \cdot \left( s_2^{2h} + t_2^{2h} - |s_2 - t_2|^{2h} \right).
    \]
\end{definition}

\begin{corollary}
    Let $B: [0,1]^2 \to \V$ be a fractional Brownian sheet with Hurst parameter $h \in (\frac12, 1)$ with independent components. Then, the quantity
    \[
    \E_{\bX \sim B}\Big[ \exp\left( \im \langle \ell, H^{\con}(\overline{\bX}) \rangle \right)\Big]
    \]
    for all $\bomega \in \cM^n_{\res}(\pV)$, $\ell \in \Mat_{2n,2n}$ and $n \in \N$ characterize the law of $B$ within all probability measures valued in $C^\rho_0([0,1]^2, \V)$ for $\rho \in (\frac12, h)$.
\end{corollary}
\begin{proof}
    By~\cite[Example 2]{harang_extension_2021}, $B \in C^\rho([0,1]^2, \V)$ almost surely for any $\rho \in (\frac12, h)$. Then, the result follows from~\Cref{thm:univ_char}.
\end{proof}

\subsection{Metrics for Random Surfaces} \label{ssec:metrics_random}

We will now use the characteristic property to obtain a metric for random surface, which requires us to summarize the information from the surface development over all restricted matrix 2-connections. Using a method similar to~\cite{lou_pcf-gan_2023} in the case of paths, we will achieve this by considering the expected distance between the surface development functions with respect to a probability measure over the space of connections and the space of linear functionals. We denote the space of probability measures on $\rho$-H\"older surfaces by $\cP^\rho \coloneqq \cP ( C^{\rho}_0([0,1]^2, \V))$ to simplify notation 

\begin{definition} \label{def:sh_metrics}
    For $n \in \N$, define $\td_n :C^{\rho}_0([0,1]^2, \V) \times C^{\rho}_0([0,1]^2, \V) \times \cM_{\res}^n(\pV) \times \Mat_{2n,2n} \to \R$ by
    \[
        \td_n(\bX, \bY; \con, \ell) \coloneqq \left| \exp\left( \im \langle \ell, H^{\con}(\obX) \rangle \right) - \exp\left( \im \langle \ell, H^{\con}(\obY) \rangle \right)\right|.
    \]
    Let $\Xi^n$ be the standard Gaussian measure on $\cM^n_{\res}$ and $\Theta^n$ be the uniform measure on the unit ball of $\Mat_{n,n}$. The \emph{level $n$ surface development pseudometric}, $d_n : \cP^\rho \times \cP^\rho \to \R$, is defined by
    \begin{align}
        d_n(\mu,\nu) \coloneqq \E_{\con \sim \Xi^n}\E_{\ell \sim \Theta^n} \E_{\bX \sim \mu} \E_{\bY \sim \nu}\Big[ \td_n(\bX,\bY; \con, \ell)\Big].
    \end{align}
    Finally, the \emph{surface development metric}, denoted $d: \cP^\rho \times \cP^\rho \to \R$ is defined by
    \begin{align}
        d(\mu, \nu) \coloneqq \sum_{n=1}^\infty \frac{d_n(\mu, \nu)}{n!}.
    \end{align}
\end{definition}

\begin{proposition} \label{prop:sh_metric}
    The functions $d_{n}$ for $n \geq 1$ in~\Cref{def:sh_metrics} are pseudometrics and $d$ is a metric.
\end{proposition}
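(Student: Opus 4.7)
The plan is to verify the standard pseudometric axioms for each $d_n$ (reading the inner double expectation as producing $|\E_\mu[e^{\im\langle\ell, H^\con(\obX)\rangle}] - \E_\nu[e^{\im\langle\ell, H^\con(\obY)\rangle}]|$ so that $d_n$ measures the $L^1$ distance between characteristic functions of the pushforwards under the random linear projection), then transfer everything to $d$ via the absolutely convergent series, and finally upgrade $d$ to a metric by using the characteristic property established in~\Cref{thm:univ_char}.

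For the pseudometric axioms of $d_n$, non-negativity follows since $\td_n \ge 0$, and symmetry is immediate from $|a-b|=|b-a|$. The vanishing on the diagonal $d_n(\mu,\mu)=0$ holds because $\E_\mu[e^{\im \langle\ell, H^\con(\obX)\rangle}] - \E_\mu[e^{\im\langle\ell,H^\con(\obY)\rangle}] = 0$ once the inner expectations are computed separately. The triangle inequality is obtained by writing, for probability measures $\mu,\nu,\lambda$ and fixed $(\con,\ell)$,
\begin{equation*}
\bigl|\E_\mu[\cdot] - \E_\nu[\cdot]\bigr| \le \bigl|\E_\mu[\cdot] - \E_\lambda[\cdot]\bigr| + \bigl|\E_\lambda[\cdot] - \E_\nu[\cdot]\bigr|,
\end{equation*}
and integrating against $\Xi^n \otimes \Theta^n$. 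These axioms then immediately lift to $d = \sum_n d_n / n!$, which converges since $|e^{\im\langle\ell,\cdot\rangle}|\le 1$ ensures each $d_n \le 2$.

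For the separation property $d(\mu,\nu)=0 \Rightarrow \mu=\nu$, I would argue as follows. The equality $d(\mu,\nu)=0$ forces $d_n(\mu,\nu)=0$ for every $n \ge 1$. The integrand
\begin{equation*}
\Phi^n_{\mu,\nu}(\con,\ell) \coloneqq \bigl|\E_\mu[e^{\im\langle\ell,H^\con(\overline{\bX})\rangle}] - \E_\nu[e^{\im\langle\ell,H^\con(\overline{\bY})\rangle}]\bigr|
\end{equation*}
is non-negative and \emph{continuous} in $(\con,\ell)$: continuity of $H^\con(\cdot)$ in $\con$ under the Frobenius metric on $\cM^{n,m,p}$ is provided by~\Cref{thm:main_sh_cont}, continuity in $\ell$ is trivial, and passage under the expectations $\E_\mu,\E_\nu$ is justified by dominated convergence since the exponentials are uniformly bounded by $1$. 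The Gaussian measure $\Xi^n$ has full support on the linear space $\cM^n_{\res}(\pV)$ and the uniform measure $\Theta^n$ has full support on the unit ball, so the integral of the non-negative continuous function $\Phi^n_{\mu,\nu}$ can vanish only if $\Phi^n_{\mu,\nu} \equiv 0$ on $\cM^n_{\res}(\pV) \times B_1$.

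Applying~\Cref{thm:univ_char} then concludes: if $\mu \neq \nu$ there would exist $n_0$ and $(\con_0,\ell_0)$ with $\Phi^{n_0}_{\mu,\nu}(\con_0,\ell_0)>0$, contradicting what was just established. The main obstacle I anticipate is precisely this last step, because the characteristicness statement in~\Cref{thm:univ_char} produces an $\ell_0 \in \Mat_{2n_0,2n_0}$ without norm control, whereas $\Theta^{n_0}$ only sees the unit ball. I would handle this by observing that the point-separation arguments underlying the characteristicness (\Cref{prop:injectivity_boundary} and~\Cref{prop:injectivity_interior}) remain valid with $\ell$ constrained to the unit ball: whenever $H^{\con}(\obX)\neq H^{\con}(\obY)$ as elements of $\cmGL^n_1$, one may rescale the separating linear functional to have operator norm at most one and still separate the exponentials, and the subalgebra structure via direct sums in~\Cref{thm:univ_char} is preserved under operator-norm bounds. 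Hence the characteristic family with $\ell$ in the unit ball is still dense in $C_b$ under the strict topology, and duality yields $\mu=\nu$.
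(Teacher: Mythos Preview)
Your proposal is correct and follows the same strategy as the paper: verify the pseudometric axioms for each $d_n$, sum to obtain $d$, and then combine full support of $(\Xi^n,\Theta^n)$ with continuity of the integrand to reduce separation to \Cref{thm:univ_char}. The paper's proof is terse on precisely the points you spell out (the intended reading of the inner expectations, and the continuity needed for the full-support step).

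Your concern about the unit-ball restriction on $\ell$ is a genuine detail the paper glosses over; your Stone--Weierstrass fix with operator-norm unit balls works, but there is a shorter route that stays closer to the paper's line. For any $\ell_0\in\Mat_{2n,2n}$ and any $k\ge 1$, form the $k$-fold direct sum $\con^{\oplus k}\in\cM^{kn}_{\res}(\pV)$ and set $\ell'=k^{-1}\,\ell_0^{\oplus k}$. By \Cref{lem:direct_sum_sh} one has $\langle \ell', H^{\con^{\oplus k}}(\cdot)\rangle=\langle \ell_0, H^{\con}(\cdot)\rangle$, while $\|\ell'\|_{\Fr}=\|\ell_0\|_{\Fr}/\sqrt{k}$, so for $k$ large enough $\ell'$ lies in the unit ball. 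Hence the vanishing of $\Phi^{kn}_{\mu,\nu}$ on the unit ball already forces $\Phi^n_{\mu,\nu}(\con,\ell_0)=0$, and no modification of the characteristic family is needed.
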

\begin{proof}
    Symmetry is straightforward. The function $d_{n}$ satisfies the triangle inequality because $\td_n(\cdot, \cdot; \con, \ell)$ satisfies it and the expectation is linear. Thus, $d_{n}$ is a pseudo-metric.

    Next, $d$ is a pseudometric since each $d_n$ is a pseudometric. Suppose that $d(\mu, \nu) = 0$. This implies that $d_{n}(\mu, \nu) = 0$ for all $n \in \N$. Then, since each $\Xi^{n}$ has full support, this implies that 
    \[
        \E_{\bX \sim \mu}\Big[ \exp\left( \im \langle \ell, H^{\con}(\overline{\bX}) \rangle \right)\Big] = \E_{\bY \sim \nu}\Big[ \exp\left( \im \langle \ell, H^{\con}(\overline{\bY}) \rangle \right)\Big]
    \]
    for all $\con \in \cM^{n}_{\res}(\pV)$, $\ell \in \Mat_{2n,2n}$ and $n \in \N$. By~\Cref{thm:univ_char}, $\mu = \nu$, so $d$ is a metric.
\end{proof}

\begin{theorem} \label{thm:sh_metric_weak_convergence}
    Weak convergence in $\cP^\rho$ implies convergence in $d$. Furthermore, given $\cK \subset \cP^\rho$ which is compact in the weak topology, convergence in $d$ implies weak convergence.
\end{theorem}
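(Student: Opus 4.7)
The plan is to handle the two implications separately, using the continuity of surface development established in \Cref{thm:main_sh_cont} for the first direction and a standard compactness argument for the second.

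For the forward direction, fix a restricted matrix $2$-connection $\con \in \cM^n_{\res}(\pV)$ and a linear functional $\ell \in \Mat_{2n,2n}$. The continuity of $\bX \mapsto \obX$ in the controlled $p$-variation norm, combined with \Cref{thm:main_sh_cont}, gives continuity of the map $\bX \mapsto \exp(\im\langle \ell, H^\con(\obX)\rangle)$ from $C^{p-\cvar}_0([0,1]^2,\V)$ to $\C$. In particular, the function
\[
    (\bX,\bY) \mapsto \td_n(\bX,\bY;\con,\ell)
\]
is continuous and uniformly bounded by $2$. Given $\mu_k \to \mu$ weakly in $\cP^p$, the product measures $\mu_k \otimes \mu$ converge weakly to $\mu \otimes \mu$ on the product space, so for each fixed $(\con,\ell)$ we have
\[
    \E_{\bX \sim \mu_k,\bY \sim \mu}[\td_n(\bX,\bY;\con,\ell)] \longrightarrow \E_{\bX,\bY \sim \mu}[\td_n(\bX,\bY;\con,\ell)] = 0.
\]
Integrating in $(\con,\ell)$ against the probability measures $\Xi^n \otimes \Theta^n$ and applying dominated convergence (uniform bound $2$) yields $d_n(\mu_k,\mu) \to 0$ for every $n$. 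A second application of dominated convergence to the sum $\sum_n d_n(\mu_k,\mu)/n!$ (each term bounded by $2/n!$) gives $d(\mu_k,\mu) \to 0$.

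For the reverse direction on a weakly compact $\cK \subset \cP^p$, consider the identity map
\[
    \iota : (\cK, \tau_w) \longrightarrow (\cK, \tau_d),
\]
where $\tau_w$ is the relative weak topology and $\tau_d$ is the topology induced by the metric $d$ from \Cref{prop:sh_metric}. By the forward direction, $\iota$ is continuous. Since $(\cK,\tau_w)$ is compact and $(\cK,\tau_d)$ is Hausdorff (being metric), $\iota$ is a continuous bijection from a compact space to a Hausdorff space, hence a homeomorphism. Therefore the two topologies agree on $\cK$, so convergence in $d$ inside $\cK$ implies weak convergence.

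The only subtle step is the continuity of $\bX \mapsto \exp(\im\langle \ell, H^\con(\obX)\rangle)$; this relies crucially on the local Lipschitz property of surface development in the Young regime, which is precisely the content of \Cref{thm:main_sh_cont}, together with the fact that the augmentation $\bX \mapsto \obX$ adds coordinates of finite $1$-variation and so preserves bounded $p$-variation. Everything else reduces to standard measure-theoretic and topological facts.
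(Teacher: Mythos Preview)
Your argument mirrors the paper's proof closely: both invoke \Cref{thm:main_sh_cont} for continuity of $\td_n$, pass the limit inside the $(\con,\ell)$-integration by dominated convergence, and then obtain the second claim from the fact that a continuous bijection from a compact space to a Hausdorff space is a homeomorphism (the paper phrases this last step more tersely as ``comparable topologies on a compact set coincide''). Your added remarks about product-measure weak convergence and the separate dominated-convergence step for the sum over $n$ are sound elaborations.

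There is, however, a step that does not hold as written. You assert, for fixed $(\con,\ell)$,
\[
\E_{\bX,\bY\sim\mu}\bigl[\td_n(\bX,\bY;\con,\ell)\bigr]=0,
\]
i.e.\ that two \emph{independent} draws from $\mu$ almost surely produce the same value of $\exp\bigl(\im\langle\ell,H^{\con}(\overline{\,\cdot\,})\rangle\bigr)$. This is false unless that function is $\mu$-a.s.\ constant; for a generic $\mu$ the displayed expectation is strictly positive. What the argument actually requires is $d_n(\mu,\mu)=0$, the pseudometric property asserted in \Cref{prop:sh_metric}. The paper's own proof makes the identical leap (``the last equality is by the weak convergence of $\mu_k$ to $\mu$''), so you have inherited rather than introduced the gap---but note that under \Cref{def:sh_metrics} as stated, $d_n(\mu,\mu)=0$ already appears problematic, suggesting the intended inner quantity is $\bigl|\E_{\bX\sim\mu}[\cdots]-\E_{\bY\sim\nu}[\cdots]\bigr|$ rather than $\E_{\bX\sim\mu}\E_{\bY\sim\nu}\bigl[|\cdots|\bigr]$.
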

\begin{proof}
    Suppose $\mu \in \cP^\rho$, and $\mu_k \in \cP^\rho$ is a measure which weakly converges to $\mu$. We note that by~\Cref{thm:main_sd_holder}, the function $\td_n$ from~\Cref{def:sh_metrics} is continuous, where $\cM^n_{\res}(\pV)$ and $\Mat_{2n,2n}$ are equipped with the usual Euclidean topology. Then, for each $n \in \N$, we have
    \begin{align*}
        \lim_{k \to \infty} d_n(\mu, \mu_k) & =  \lim_{k \to \infty} \E_{\con \sim \Xi^n}\E_{\ell \sim \Theta^n} \E_{\bX \sim \mu} \E_{\bY \sim \mu_k}\Big[ \td_n(\bX,\bY; \con, \ell)\Big] \\
        & = \E_{\con \sim \Xi^n}\E_{\ell \sim \Theta^n} \Big[ \lim_{k \to \infty} \E_{\bX \sim \mu}\E_{\bY \sim \mu_k}\Big[ \td_n(\bX,\bY; \con, \ell)\Big] \Big] \\
        & = 0,
    \end{align*}
    where the second line is given by the dominated convergence theorem, and the last equality is by the weak convergence of $\mu_k$ to $\mu$. Thus, $\lim_{k \to \infty} d(\mu, \mu_k) = 0$. Because the weak topology and the topology induced by $d$ are comparable on a compact $\cK \subset \cP^\rho$, they must coincide on $\cK$. 
\end{proof}

\appendix
\clearpage
\section{Notation} \label{apx:notation}

{\small
\begin{longtable}
    {ccc}
     \toprule
    Symbol & Description & Page\\ \midrule
    \multicolumn{3}{c}{Function Spaces} \\ \midrule
    $C_b(\cX, \mathbb{K})$ & continuous bounded functions & \pageref{pg:Cb} \\
    $C^\infty([0,1]^k, \V)$ & space of smooth paths ($k=1$)/surfaces ($k=2$) with sitting instants  & \pageref{pg:smooth_paths}, \pageref{pg:smooth_surfaces}\\
    $C^\rho([0,1]^k \V)$ & space of $\rho$-H\"older paths/surfaces&  \pageref{pg:holder_space} \\
    $C^{0, \rho}([0,1]^k, \V)$ & space of $\rho$-H\"older paths/surfaces in  smooth closure & \pageref{pg:smoothcl_holder}\\
    $\mathsf{T}_k(\V)$ & thin homotopy classes of smooth paths/surfaces & \pageref{eq:thin_paths}, \pageref{eq:thin_surfaces}\\
    $\mathsf{T}^{0,\rho}_k(V)$ & thin homotopy classes of $\rho$-H\"older paths/surfaces in smooth closure & \pageref{def:thin_smoothcl_holder}\\ 
    $C^{R}_0([0,1]^k, \V)$ & space of smooth paths/surfaces in $C^R([0,1]^k, \V)$ based at origin & \pageref{pg:based}\\
    $C^{R}_\square([0,1]^k, \V)$ & space of smooth paths/surfaces in $C^R([0,1]^k, \V)$ with trivial boundary & \pageref{pg:triv_boundary} \\\midrule
    \multicolumn{3}{c}{Double Groups and Crossed Modules} \\ \midrule
    $\thindg(\V)$ & thin double group of smooth surfaces& \pageref{def:thin_double_group}\\
    $\thindg^{0,\rho}(\V)$ & thin double group of $\rho$-H\"older surfaces in smooth closure & \pageref{prop:thin_holder_dg}\\
    $\dgcm(\cmG)$ & double group associated to crossed module $\cmG$ & \pageref{prop:dgcm} \\
    $\cmGL^{n,m,p}$ & general linear crossed module for $\R^{n+m} \to \R^{n+p}$ & \pageref{eq:general_linear_crossed_module} \\
    $\cmgl^{n,m,p}$ & general linear crossed module of Lie algebras for $\R^{n+m} \to \R^{n+p}$ & \pageref{pg:cmgl} \\
    $\delta, \gt$ & crossed module boundary map and action (for groups and Lie algebras) & \pageref{def:GCM}, \pageref{def:DCM} \\
    $\concat_h, \concat_v$ & horizontal and vertical concatenation of surfaces & \pageref{eq:surface_concatenation} \\
    $\odot_h, \odot_v$ & horizontal and vertical composition in other double groups & \pageref{prop:dgcm} \\ \midrule
    \multicolumn{3}{c}{Path and Surface Development} \\ \midrule
    $\bomega$ & $\bomega = (\alpha, \gamma)$ fake flat translation-invariant 2-connection & \pageref{def:2_connection}\\
    $\cM^{n,m,p}$ & space of 2-connections valued in $\cmgl^{n,m,p}$ & \pageref{def:matrix_2connection}\\
    $\cM^{n}_\res$ & restricted linear space of 2-connections valued in $\cmgl^{n,n,n}$ & \pageref{pg:cM_res}\\
    $F^\alpha$ & path development map & \pageref{pg:path_development}\\
    $\bF^\bomega, H^\bomega$ & surface development functor and map & \pageref{eq:sd_functor} \\
    $\cA, \cA_{\res}$ & (restricted) space of surface development functions & \pageref{pg:A_not_res}, \pageref{pg:A_res}\\\midrule
    \multicolumn{3}{c}{Norms and Seminorms} \\ \midrule
    $\|\bx\|_\rho, \llb\bx\rrb_\rho$ & $\rho$-H\"older norm and seminorm for paths & \pageref{pg:holder_space}\\
    $\|\bX\|_\rho, \llb\bX\rrb_\rho, |\bX|_{\rho, (i)}$ & $\rho$-H\"older norm and seminorms for surfaces & \pageref{pg:holder_space}\\
    $\|\bomega\|_\F$ & Frobenius norm & \pageref{eq:frob_norm_matrix_2_connection} \\
    $\|V\|_{\Lip}$ & Lipschitz norm of vector fields & \pageref{pg:lip_norm}\\ \midrule
\end{longtable}
}

\section{Further Details on Thin Double Group} \label{apx:thin_dg}

\begin{example} \label{ex:2d_thin_homotopy}
Here, we provide some examples of thin homotopy invariance of $\bX: [0,1]^2 \to \R^d$.
\begin{enumerate}

    \item \textbf{Reparametrizations of the domain.} Suppose $\psi: [0,1]^2 \to [0,1]^2$ is a smooth bijection which preserves the corners and boundary, $\psi(\partial [0,1]^2) = \partial [0,1]^2$. Then $\bX \circ \psi$ is thin homotopy equivalent to $\bX$. Indeed, we can first define a homotopy $\Psi: [0,1]^3 \to [0,1]^2$ between $\psi$ and the identity $\id: [0,1]^2 \to [0,1]^2$ by
    \[
        \Psi_{s,t,u} \coloneqq u \psi_{s,t} + (1-u) \id_{s,t}.
     \]
     Then, $\bighomotopy: [0,1]^3 \to \R^d$ defined by $\bighomotopy_{s,t,u} = \bX \circ \Psi_{s,t,u}$ is a thin homotopy from $\bX$ to $\bX \circ \psi$. 

    \item \textbf{Folding.} This is a type of invariance which arises when considering horizontal and vertical inverses as described above. In particular, for any smooth surface $\bX$, we have $\bX \concat_h \bX^{-h} \sim_{\thinhom} 1^h_{\bdy_l \bX}$, and similarly for vertical composition. 
\end{enumerate}
\end{example}

Now, we will provide some further details on the double group operations in the thin double group (\Cref{def:thin_double_group}).\medskip

\textbf{Boundary Maps.}
By definition, thin homotopy equivalent surfaces have thin homotopy equivalent boundaries, so the boundaries from~\Cref{eq:strict_boundaries} are well-defined.\medskip

\textbf{Identity Squares.}
Given a thin homotopy equivalence class of paths $[\bx] \in \thinpath(\V)$ with a representative $\bx: [0,1] \to \R^d$, the horizontal and vertical identity squares are exactly $[1^h_\bx], [1^v_\bx] \in \thinsurface(\V)$. To verify this is well defined, suppose $\bx \sim_{\thinhom} \by$, and let $\homotopy: [0,1]^2 \to \R^d$ be a thin homotopy between $\bx$ and $\by$. Then, we note that $\bighomotopy:[0,1]^3 \to \R^d$ defined by $\bighomotopy_{u,s,t} = \homotopy_{u,t}$ is a thin homotopy between $1^h_\bx$ and $1^h_\by$ since $\bighomotopy$ is constant in the $s$-direction and therefore $\rank(d\bighomotopy) = \rank(d\homotopy) = 1 \leq 2$. \medskip

\textbf{Inverse Squares.}
Given a thin homotopy equivalence class of surfaces $[\bX] \in \thinsurface(\V)$ with a representative $\bX:[0,1]^2 \to \R^d$, the horizontal and vertical inverses are exactly $[\bX^{-h}], [\bX^{-v}] \in \thinsurface(\V)$. If $\bX \sim_{\thinhom} \bY$, then we have $\bX^{-h} \sim_{\thinhom} \bY^{-h}$ and $\bX^{-v} \sim_{\thinhom} \bY^{-v}$ by inverting the thin homotopy between $\bX$ and $\bY$ in the same way as the horizontal and vertical inverses. One can check that $[\bX^{-h}]$ and $[\bX^{-v}]$ act as inverses with respect to horizontal and vertical concatenation,
\[
    [\bX] \concat_h [\bX^{-h}] = [1^h_{\bdy_l \bX}], \quad \quad [\bX] \concat_v [\bX^{-v}] = [1^v_{\bdy_b \bX}],
\]
since thin homotopy quotients out 2-dimensional retracings (\Cref{ex:2d_thin_homotopy}).

\section{Proofs for Young Surface Development} \label{apx:young_proofs}
In this appendix, we provide the details for proving~\Cref{thm:main_sd_holder}.

\begin{remark}
    In this appendix, we use the following notation. For a path $\bx: [0,1] \to V$, we denote $\bx_{s,t} = \bx_t - \bx_s$. For a surface $\bX: [0,1]^2 \to V$, we use a semicolon to denote evaluation at a point $\bX_{s;t}$ and denote increments by $\bX_{s_1, s_2; t} = \bX_{s_2; t} - \bX_{s_1; t}$ and $\bX_{s; t_1, t_2} = \bX_{s; t_2} - \bX_{s; t_1}$. Furthermore, we let $\bX_{\bullet; t}: [0,1] \to \V$ denote the path $s \mapsto \bX_{s;t}$ and $\bX_{s;\bullet} : [0,1] \to \V$ denote the path $t \mapsto \bX_{s;t}$.
\end{remark}

\subsection{Preliminaries}
We begin with a lemma for generalized H\"older functions, whose proof will be instructive for later arguments. All matrices are equipped with the Frobenius norm.
\begin{lemma} \label{lem:matrix_product_holder}
    Let $n,m,p \in \N$. Let $\bX^1, \bX^2 \in C^\rho([0,1]^2, \Mat_{n,m})$ and $\bY^1, \bY^2 \in C^\rho([0,1]^2, \Mat_{m,p})$ such that $\|\bX^i\|_\rho, \|\bY^i\|_\rho < \ell$ for some $\ell > 0$. Then the pointwise matrix product $\bZ^i = \bX^i \cdot \bY^i$ satisfies 
    \begin{align}
        \|\bZ^1\|_\rho \leq \|\bX^1\|_\rho \|\bY^1\|_\rho \quad \andd \quad \|\bZ^1 - \bZ^2\|_\rho \ls_{\rho, \ell} \|\bX^1 - \bX^2\|_\rho + \|\bY^1 - \bY^2\|_\rho.
    \end{align}
\end{lemma}
\begin{proof}
    Let $R = [s_1, s_2] \times [t_1, t_2]$. By expanding $\square_R[\bZ]$ (see~\cite[Lemma 5]{harang_extension_2021}), we get
    \begin{align*}
        \square_{R}[\bZ] = \bX_{s;t} \cdot \square_{R}[\bY] + (\bX_{s;t}  - \bX_{s;t'})(\bY_{s;t'} - \bY_{s';t'}) + (\bX_{s;t} - \bX_{s';t}) (\bY_{s';t} - \bY_{s';t'}) + \square_{R}[\bX] \bY_{s';t'}.
    \end{align*}
    Then, we get
    \begin{align*}
        |\bZ|_{\rho, (1,2)} \leq \|\bX\|_\infty \cdot |\bY|_{\rho, (1,2)} + |\bX|_{\rho, (2)} \cdot |\bY|_{\rho, (1)} + |\bX|_{\rho, (1)} \cdot |\bY|_{\rho, (2)} + |\bX|_{\rho, (1,2)} \cdot \|\bY\|_\infty \leq \|\bX\|_\rho \cdot \|\bY\|_\rho.
    \end{align*}
    The bounds for $|\bZ|_{\rho, (i)}$ are the same as the classical setting. The second inequality can be obtained from the first by the triangle inequality.
\end{proof}

Next, we will require the standard bounds for Young integrals, where we use the version from~\cite[Theorem 16]{harang_extension_2021}.
\begin{proposition} \label{prop:young_1d}
    For $\by \in C^{\rho}([0,1], L(\R^d, \R^e))$ and $\bx \in C^\rho([0,1], \R^d)$, with $\rho > \frac12$, we have
    \begin{align}
        \left|\int_s^t \by_r d\bx_r - \by_s (\bx_t - \bx_s)\right| \leq C \llb \by\rrb_\rho \cdot \|\bx\|_\rho |t-s|^{\rho}.
    \end{align}
\end{proposition}

\begin{proposition} \label{prop:young_2d}
    For $\bY \in C^\rho([0,1]^2, L(\R^d, \R^e))$ and $\bX \in C^\rho([0,1]^2, \R^d)$, with $\rho > \frac12$ and a rectangle $R = [s_1, s_2] \times [t_1, t_2]$, we have
    \begin{align}
        \left|\int_R \bY_r d\bX_r - \bY_s \square_R[\bX]\right| \leq C \llb \bY\rrb_\rho \cdot \|\bX\|_\rho |s_1 - s_2|^\rho |t_1 - t_2|^\rho.
    \end{align}
\end{proposition}

We consider Young differential equations, which are equations of the form 
\begin{align} \label{eq:1d_cde}
    d\by_t = V(\by_t) d\bx_t,
\end{align}
where $\bx \in C^\rho([0,1], \V)$ is the driving signal and $V: \R^e \to \Lin(\R^d, \R^e)$ is a vector field. We consider \emph{affine} vector fields  $V(x) = V_0 + V_1(x)$, where $V_0 \in \R^e$ is a constant and $V_1 \in \Lin(\R^e, \Lin(\R^d, \R^e)) \label{pg:lip_norm}$ is a linear vector field. We equip affine vector fields with the Lipschitz norm,
\begin{align}
    \|V\|_{\Lip} = \llb V \rrb_{\Lip} + \|V(0)\| \quad \text{where} \quad \llb V \rrb_{\Lip} = \sup_{x,y \in \R^e} \frac{\|V(x) - V(y)\|}{|x-y|}.
\end{align}
We denote the solution of~\Cref{eq:1d_cde} with initial condition $\by_0$ by
\begin{align}
    P^V(\bx, \by_0) \in C^\rho([0,1], \V).
\end{align}
We require some basic results on such equations and use~\cite{galeati_nonlinear_2023} as the main reference. This studies nonlinear Young equations, but we will state results in the linear setting (with affine vector fields), which arises as special cases.

\begin{proposition}{\cite[Theorem 3.9]{galeati_nonlinear_2023}} \label{prop:gronwall}
    Let $\rho > \frac12$, $V : \R^e \to L(\R^d, \R^e)$ be a Lipschitz vector field, $\by \in C^\rho([0,1], \R^e)$, and $\bx \in C^\rho([0,1], \R^d)$. Let $v, \ell > 0$ such that $\|V\|_{\Lip}< v$ and $|\bx|_\rho < \ell$. If $\by$ satisfies
    \begin{align}
        |\by_t - \by_{s}| \leq \left|\int_s^t V(\by_r) d\bx_r\right| + D |t-s|^\rho,
    \end{align}
    for some $D> 0$, then,
    \begin{align}
        |\by|_\rho \ls_{\rho, v, \ell} \|V\|_{\Lip} |\bx|_\rho (\|\by_0\|+D) + D.
    \end{align}
\end{proposition}

\begin{proposition}{\cite[Theorem 3.9, Corollary 3.13]{galeati_nonlinear_2023}}\label{prop:young_cde_bound}
    Let $\bx \in C^\rho([0,1], \V)$ with $\rho > \frac12$, and $V : \R^e \to L(\R^d, \R^e)$ is a Lipschitz vector field. There exists a constant $C>0$ depending only on $\rho$ such that for all initial conditions $\by_0 \in \R^e$, there exists a unique solution $P^V(\bx, \by_0)$ to~\Cref{eq:1d_cde} and it satisfies
    \begin{align}
        \| P^V(\bx, \by_0)\|_\rho \ls_{\rho} \exp\left(C \|V\|_{\Lip}^2 \|\bx\|_{\rho}^2\right) \|\by_0\|.
    \end{align}
\end{proposition}

\begin{proposition}{\cite[Theorem 3.14]{galeati_nonlinear_2023}} \label{prop:young_cde_stability}
    Let $\bx, \bx' \in C^\rho([0,1], \V)$, where $\rho > \frac12$, $V, V'$ be Lipschitz vector fields. Suppose $\|\bx\|_\rho, \|\bx'\|_\rho < \ell$ and $\|V\|_{\Lip}, \|V'\|_{\Lip} < v$. Let $\by_0, \by'_0 \in \V$ be two initial conditions. Then, the solutions $\by, \by'$ satisfy
    \begin{align}
        \llb P^V(\bx, \by_0) - P^{V'}(\bx', \by'_0)\rrb_\rho \ls_{\rho,v,\ell} \|\by_0 - \by'_0\| + \|\bx - \bx'\|_\rho + \|V - V'\|_{\Lip}.
    \end{align}
\end{proposition}

\subsection{2D Regularity and Stability of Path Development}
Here, we prove some preliminary results about the 2D regularity and stability of the path development along paths in a surface. In particular, given $\bX \in C^\rho([0,1]^2, \V)$ and $V \in \Lin(\R^e, \Lin(\V, \R^e))$, we define $Q^V(\bX) : [0,1]^2 \to \R^e$ by
\begin{align} \label{eq:path_holonomy_surfaceP}
    Q^V_{s;t}(\bX) = P^V(\bx^{s,t}, q) = \int_0^t V(Q^V_{0;r}(\bX)) d\bX_{0;r} + \int_0^s V(Q^V_{r;t}(\bX)) d\bX_{r;t} \quad \text{with} \quad Q^V_{0;0}(\bX) = q,
\end{align}
where $\bx^{s,t}$ is the tail path~\Cref{eq:tail_path}.
Our first task is to show that $Q^V(\bX) \in C^\rho([0,1]^2, \V)$.

\begin{proposition} \label{prop:pdproc_holder}
    Let $\bX \in C^\rho([0,1]^2, \V)$ and $V \in \Lin(\R^e \Lin(\R^e, \R^e))$, then $Q^V(\bX) \in C^\rho([0,1]^2, \V)$.
\end{proposition}
\begin{proof}
    Let $R = [s_1, s_2] \times [t_1, t_2]$ and $\bz_t \coloneqq P^V_{t}(\bX_{0;\bullet}, q)$. Then, by~\Cref{prop:young_cde_stability}, we have
    \begin{align} \label{eq:pdproc_rect_bound}
        \left|\square_R[Q^V(\bX)]\right| &= \left|P^V_{s_1 s_2}(\bX_{\bullet; t_2}, \bz_{t_2}) - P^V_{s_1, s_2}(\bX_{\bullet; t_1}, \bz_{t_1})\right| \\
        & \ls (\|\bz_{t_2} - \bz_{t_1}\| + \|\bX_{\bullet; t_2} - \bX_{\bullet; t_1}\|_\rho) \cdot|s_2 - s_1|^\rho \\
        & \ls( \llb \bz\rrb_\rho |t_2 - t_1|^\rho + |\bX|_{\rho,(1,2)} |t_2 - t_1|^\rho) |s_2 - s_1|^\rho.
    \end{align}
    Then, since $\llb \bz\rrb_\rho$ can be bound by~\Cref{prop:young_cde_bound}, we have $|Q^V(\bX)|_{\rho, (1,2)} < \infty$. Next, for the condition in the $(1)$ direction, we have
    \begin{align}
        |Q^V_{s_2; t} - Q^V_{s_1;t}| = |P^V_{s_1, s_2}(\bX_{\bullet; t}, \bz_t)| \ls \llb P^V(\bX_{\bullet; t}, \bz_t) \rrb_\rho |s_2 - s_1|^\rho,
    \end{align}
    which can also be bound by~\Cref{prop:young_cde_bound}. Finally, in the $(2)$ direction, we have
    \begin{align}
        |Q^V_{s; t_2} - Q^V_{s; t_1}| &= |P^V_{s,0}(\bX_{\bullet; t_2}, \bz_{t_2}) - P^V_{s,0}(\bX_{\bullet; t_1}, \bz_{t_1}) - (\bz_{t_2} - \bz_{t_1})| \\
        &\ls (\|\bz_{t_2} - \bz_{t_1}\| + \|\bX_{\bullet; t_2} - \bX_{\bullet; t_1}\|_\rho) + \|\bz_{t_2} - \bz_{t_1}\|,
    \end{align}
    which can be bound in the same way as~\Cref{eq:pdproc_rect_bound}. Thus, $Q^V(\bX) \in C^\rho([0,1]^2, \V)$.
\end{proof}

Thus, we have defined a map
\begin{align} \label{eq:Q}
    Q: C^\rho([0,1]^2, \V) \times \Lin(\R^e, \Lin(\V, \R^e)) \to C^\rho([0,1]^2, \R^e),
\end{align}
and we will now study the continuity of this map with respect to both variables.

\begin{proposition} \label{prop:pdproc_stability_surface}
    Let map $Q$ is locally Lipschitz with respect to $C^\rho([0,1]^2, \V)$.
\end{proposition}
\begin{proof}
    Let $\bX, \bY \in C^\rho([0,1]^2, \V)$ and $R = [s_1, s_2] \times [t_1, t_2]$. Denote
    \begin{align*}
        \wbX_{s;t} = Q^V_{s;t}(\bX), \quad \wbY_{s;t} = Q^V_{s;t}(\bY), \quad \bq_s = \bX_{s;t_1} - \bY_{s;t_1} - \bX_{s;t_2} + \bY_{s;t_2}, \quad \wbq_s = \wbX_{s;t_1} - \wbY_{s;t_1} - \wbX_{s;t_2} + \wbY_{s;t_2}
    \end{align*}
    By direct computation, we get $\square_R[Q^V(\bX) - Q^V(\bY)] = \wbq_{s_1, s_2}$. Then, by expanding the definition of $Q$, using the linearity of $V$, and expanding as in~\cite[Lemma 5]{harang_extension_2021} (see proof of~\Cref{lem:matrix_product_holder}), we obtain the following bound for $|\wbq_{s_2} - \wbq_{s_1}|$, 
    \begin{align*}
        &\left|\int_{s_1}^{s_2} V(\wbX_{s;t_1}) d\bq_s + V(\wbq_s) d\bX_{s;t_2} + V(\wbX_{s;t_1} - \wbY_{s;t_1}) d(\bY_{s; t_1} - \bY_{s;t_2}) + V(\wbY_{s;t_1} - \wbY_{s;t_2}) d(\bX_{s;t_2} - \bY_{s;t_2})\right| \\
        & \quad \leq C |s_2 - s_1|^\rho  + \int_{s_1}^{s_2}V(\wbq_s) d\bX_{s;t_2}
    \end{align*}
    where
    \begin{align}
        C = v\left(\llb \tbX_{\bullet; t_1} \rrb_\rho \|\bq\|_\rho + \llb \wbX_{\bullet; t_1} - \wbY_{\bullet; t_1} \rrb_{\rho} \|\bY_{\bullet; t_1} - \bY_{\bullet; t_2}\|_\rho +  \llb \wbY_{\bullet; t_1} - \wbY_{\bullet; t_2}\rrb_\rho \|\bX_{\bullet; t_2} - \bY_{\bullet; t_2}\|_\rho  \right).
    \end{align}
    Then, by~\Cref{prop:gronwall}, we have
    \begin{align}
        \llb \wbq\rrb_\rho \ls_{\rho, v, \ell}  \|\wbq_0\|+C.
    \end{align}
    By definition, $\wbq_0 = P^V_{t_1, t_2}(\bX_{0; \bullet}, q) - P^V_{t_1, t_2}(\bY_{0; \bullet}, q)$, so by~\Cref{prop:young_cde_stability}, we have
    \begin{align}
        \|\wbq_0\| \leq \llb P^V_{t_1, t_2}(\bX_{0; \bullet}, q) - P^V_{t_1, t_2}(\bY_{0; \bullet}, q) \rrb_\rho |t_1 - t_2|^\rho \ls_{\rho, v, \ell} \|\bX - \bY\|_\rho |t_1 - t_2|^\rho.
    \end{align}
    Similarly, we have $\|\bq\|_\rho \ls\|\bX-\bY\|_{\rho} |t_1 - t_2|^\rho$ and
    \begin{align}
       \llb \wbX_{\bullet; t_1} - \wbY_{\bullet; t_1} \rrb_{\rho}, \|\bX_{\bullet; t_2} - \bY_{\bullet; t_2}\|_\rho  \ls \|\bX - \bY\|, \quad \|\bY_{\bullet; t_1} - \bY_{\bullet; t_2}\|_\rho, \llb \wbY_{\bullet; t_1} - \wbY_{\bullet; t_2}\rrb_\rho  \ls |t_1 - t_2|^\rho,
    \end{align}
    so that $C \ls_{\rho, v, \ell} \|\bX - \bY\|_\rho |t_1 - t_2|^\rho$. Thus, we have
    \begin{align}
        \left|\square_R[Q^V(\bX) - Q^V(\bY)]\right| \ls_{\rho, v, \ell} \|\bX- \bY\|_{\rho} |s_1 - s_2|^\rho |t_1 - t_2|^\rho.
    \end{align}
    The bounds for the $(1)$ and $(2)$ directions can be performed similarly.
\end{proof}

\begin{proposition} \label{prop:pdproc_stability_vf}
    Let map $Q$ is locally Lipschitz with respect to $\Lin(\R^e, \Lin(\R^d, \R^e))$.
\end{proposition}
\begin{proof}
    Let $\bX \in C^\rho([0,1]^2, \V)$, $ V, W \in \Lin(\R^e, \Lin(\R^d, \R^e))$ and $R = [s_1, s_2] \times [t_1, t_2]$. Denote
    \begin{align*}
        \wbX_{s;t} = Q^V_{s;t}(\bX), \quad \wbY_{s;t} = Q^W_{s;t}(\bX), \quad \wbq_s = \wbX_{s;t_1} - \wbY_{s;t_1} - \wbX_{s;t_2} + \wbY_{s;t_2}
    \end{align*}
    Note that $\square_R[Q^V(\bX) - Q^W(\bX)] = \wbq_{s_1,s_2}$. Similar to~\Cref{prop:pdproc_stability_vf}, we expand $|\wbq_{s,s'}|$ to get
    \begin{align}
         &\Big| \int_{s_1}^{s_2} V(\wbq_r)d \bX_{s;t_1} + (V - W)(\wbX_{s;t_1,t_2}) d\bX_{s;t_1,t_2} + (V-W)(\wbY_{s;t_1, t_2}) d\bX_{s;t_1} + W(\wbX_{s;t_2} - \wbY_{s;t_2}) d\bX_{s; t_1, t_2} \Big| \nonumber \\
        & \quad \leq C|s_2 - s_1|^\rho + \int_{s_1}^{s_2} V(\wbq_s)d \bX_{s;t_1},
    \end{align}
    where
    \begin{align}
        C = \|V - W\|_{\Lip}\left(\llb \wbX_{\bullet; t_2}\rrb_\rho \|\bX_{\bullet;t_1, t_2}\|_\rho +  \llb \wbY_{\bullet;t_1,t_2}\rrb_\rho \|\bX_{\bullet; t_1}\|_\rho \right) + v \llb \wbX_{\bullet; t_2} - \wbY_{\bullet; t_2}\rrb_\rho \|\bX_{\bullet; t_1, t_2} \|_\rho.
    \end{align}
    Then, by~\Cref{prop:gronwall}, we have
    \begin{align}
        \llb \wbq \rrb_\rho \ls_{\rho, v, \ell} \|\wbq_0\| + C.
    \end{align}
    First, by definition of $\wbq$ and using~\Cref{prop:young_cde_stability}, we obtain
    \begin{align}
        \|\wbq_0\| \leq \llb P^V_{t_1, t_2}(\bX_{0; \bullet}, q) - P^W_{t_1, t_2}(\bX_{0; \bullet}, q)\rrb_\rho |t_1 - t_2|^\rho \ls_{\rho, v, \ell} \|V - W\|_{\Lip} |t_1 - t_2|^\rho 
    \end{align}
    Next, we consider $C$, and note that by applying~\Cref{prop:young_cde_stability} again, we obtain
    \begin{align*}
        \llb \wbX_{s_1, s_2; t_2} - \wbY_{s_1, s_2; t_2}\rrb_\rho& = \llb P^V_{s_1, s_2}(\bX_{\bullet; t_2}, \wbX_{0;t_2}) - P^W_{s_1, s_2}(\bX_{\bullet; t_2}, \wbY_{0;t_2})\rrb_\rho \ls_{\rho, v, \ell} \|V - W\|_{\Lip} + \|\wbX_{0;t_2} - \wbY_{0;t_2}\|.
    \end{align*}
    By the same reasoning, we have $\|\wbX_{0;t_2} - \wbY_{0;t_2}\| \ls_{\rho, v, \ell} \|V - W\|_{\Lip}$, and furthermore that
    \begin{align}
        \llb \wbX_{\bullet; t_2}\rrb_\rho, \|\bX_{\bullet; t_1}\|_\rho \ls 1 \quad \andd \quad \|\bX_{\bullet; t_1, t_2}\|_\rho, \llb \wbY_{\bullet;t_1, t_2}\rrb_\rho \ls|t_1 - t_2|^\rho.
    \end{align}
    Putting these all of these bounds together, we obtain
    \begin{align}
        |Q^V(\bX) - Q^W(\bY)|_{\rho, (1,2)} \ls_{\rho, v, \ell} \|V - W\|_{\Lip} |s_1 - s_2|^\rho |t_1 - t_2|^\rho.
    \end{align}
    The bounds for the $(1)$ and $(2)$ directions can be performed similarly.
\end{proof}

Consider a modified definition of $Q$ where we flip the order of the horizontal and vertical components of the tail path. Given $\bX \in C^\rho([0,1]^2, \V)$ and $V \in \Lin(\R^e, \Lin(\V, \R^e))$, we define $\wQ^V(\bX): [0,1]^2 \to \R^e$ by
\begin{align} \label{eq:Q_flipped}
    \wQ^V_{s;t}(\bX) = \int_0^s V(\wQ^V_{r;0}(\bX)) d\bX_{r;0} + \int_0^t V(\wQ^V_{s;r}(\bX)) d\bX_{s;r} \quad \text{with} \quad \wQ^V_{0;0}(\bX) = q.
\end{align}
This is defined by applying $Q$ to the reflection of $\bX$ along $s=t$, thus implying the following.

\begin{corollary} \label{cor:pdproc_flipped}
    The map $\wQ: C^\rho([0,1]^2, \V) \times \Lin(\R^e, \Lin(\V, \R^e)) \to C^\rho([0,1]^2, \R^e)$ from~\Cref{eq:Q_flipped} is well defined and locally Lipschitz.
\end{corollary}

\subsection{Young Surface Development}

Next, we apply the results in the previous section to study the regularity and stability of the area process. For a surface $\bX \in C^\rho([0,1]^2, \V)$, we let $\bx^{s,s';t,t'}: [0,1] \to \V$ denote the boundary path of $\bX$ restricted to the rectangle $[s,s'] \times [t,t']$. Then, the area process is defined as the level 2 path signature
\begin{align} \label{eq:area_process_young}
    A_{s;t}(\bX) = \int_{\Delta^2} d\bx^{0,s;0,t}_{u_1} \otimes d\bx^{0,s;0,t}_{u_2} \in \Lambda^2 \V,
\end{align}
which is anti-symmetric since $\bx^{0,s;0,t}$ is a loop. 

\begin{proposition} \label{prop:areaproc_holder_cont}
    The area process $A: C^\rho([0,1]^2, \V) \to C^\rho([0,1]^2, \Lambda^2 \V)$ defined in~\Cref{eq:area_process_young} is well-defined and locally Lipschitz. 
\end{proposition}
\begin{proof}
    First, we can decompose the boundary loop as $\bx^{0,s;0,t} = \wbx^{s,t} \concat (\bx^{s,t})^{-1}$, where
    \begin{align} 
        \bx^{s,t}_u \coloneqq \left\{ \begin{array}{cl}
            \bX_{0;2ut} & : u \in [0,1/2] \\
            \bX_{(2u-1)s; t } & : u \in (1/2, 1]
        \end{array} \right.
        \quad \andd \quad
        \wbx^{s,t}_u \coloneqq \left\{ \begin{array}{cl}
            \bX_{2us;0} & : u \in [0,1/2] \\
            \bX_{s; (2u-1)t } & : u \in (1/2, 1]
        \end{array} \right.
    \end{align}
    Second, we note the path signature with respect to $\bx^{s,t}$ and $\wbx^{s,t}$ is a Young CDE in the form of $Q(\bX)$ and $\wQ(\bX)$ respectively. In particular, we can show that
    \begin{align}
        A_{s;t}(\bX) = \wQ_{s;t}(\bX) - Q_{s;t}(\bX). 
    \end{align}
    Then, we obtain the result by applying~\Cref{prop:pdproc_holder},~\Cref{prop:pdproc_stability_surface} and~\Cref{cor:pdproc_flipped}.
\end{proof}

We can put the above results together to define surface development in the Young regime.

\begin{proposition} \label{prop:Z_holder}
    The map $Z: C^\rho([0,1]^2, \V) \times \cM^{n,m,p} \to C^\rho([0,1]^2, \cmgl_1^{n,m,p})$ defined by
    \begin{align}
        Z^\bomega_{s;t}(\bX) \coloneqq \int_0^t \int_0^s F^\alpha(\bx^{s',t'}) \cdot \gamma \cdot F^\beta(\bx^{s',t'})^{-1} \, dA_{s';t'}(\bX),
    \end{align}
    where $\bomega = (\cona, \conb, \conc) \in \cM^{n,m,p}$, is well defined and locally Lipschitz in both variables. 
\end{proposition}
\begin{proof}
    First, we note that for $\bX \in C^\rho([0,1]^2, \V)$, the path development with respect to the tail paths correspond to the $Q$ map defined in~\Cref{eq:Q}; in particular, $F^\alpha(\bx^{s,t}) = Q^\alpha_{s;t}(\bX)$ with $\R^e = \Mat_{n+m, n+m}$ and $F^\beta(\bx^{s,t}) = Q^\beta_{s;t}(\bX)$ with $\R^e = \Mat_{n+p, n+p}$.
    
    Next, fix $v, \ell > 0$, and let $L>0$ such that $\det(F^\beta(\bx^{s,t})): [0,1]^2 \to \R$ is bounded below by $L$ for all $\|\bX\|_\rho \leq \ell$. Note that matrix inversion is Lipschitz on such matrices with Lipschitz constant $2/L^2$. Thus, from~\Cref{prop:pdproc_holder} and \Cref{lem:matrix_product_holder}, we have
    \begin{align}
        T^\bomega(\bX) = F^\alpha(\bx^{s,t}) \cdot \gamma \cdot F^\beta(\bx^{s',t'})^{-1} \in C^\rho([0,1]^2, \Lin(\Lambda^2\V, \cmgl_1^{n,m,p})).
    \end{align}
    Then, since $A(\bX) \in C^\rho([0,1]^2, \Lambda^2 \V)$ we can perform 2D Young integration (\Cref{prop:young_2d}), and thus $Z^\bomega(\bX) \in C^\rho([0,1]^2, \cmgl_1^{n,m,p})$. \medskip

    For continuity, we start by considering two surfaces $\bX, \bY \in C^\rho([0,1]^2, \V)$ with $\|\bX\|_\rho , \|\bY\|_\rho < \ell$. Then, by~\Cref{lem:matrix_product_holder}, we have
    \begin{align*}
        \|T^\bomega(\bX) - T^\bomega(\bY)\|_\rho \ls_{\rho, \ell} \|F^\alpha(\bx^{s,t}) - F^\alpha(\by^{s,t})\|_\rho \|\gamma\|_\rho + \|F^\beta(\bx^{s,t})^{-1} - F^\beta(\by^{s,t})^{-1}\|_\rho \ls_{\rho, v, \ell} \|\bX - \bY\|_\rho.
    \end{align*}
    Next, let $\bomega_1, \bomega_2 \in \cM^{n,m,p}$ and by applying~\Cref{lem:matrix_product_holder} again, we obtain
    \begin{align*}
        \|T^{\bomega_1}(\bX) - T^{\bomega_2}(\bX)\|_\rho &\ls_{\rho, v, \ell} \|F^{\alpha_1}(\bx^{s,t}) - F^{\alpha_2}(\bx^{s,t})\|_\rho + \|\gamma_1 - \gamma_2\| + \|G^{\beta_1}(\bx^{s,t})^{-1} - G^{\beta_2}(\bx^{s,t})\|_\rho\\
        & \ls_{\rho, v, \ell} \|\bomega_1 - \bomega_2\|.
    \end{align*}
    Putting this together, we get $\|T^{\bomega_1}(\bX) - T^{\bomega_2}(\bY)\|_\rho \ls_{\rho, v, \ell} \|\bX - \bY\|_\rho + \|\bomega - \bomega_2\|$. Finally, for $Z$,
    \begin{align*}
        \|Z^{\bomega_1}(\bX) - Z^{\bomega_2}(\bY)\|_\rho &\ls_{\rho, \ell}\left\|\int \big(T^{\bomega_1}(\bX) - T^{\bomega_2}(\bX)\big) dA(\bX)\right\|_\rho + \left\| \int T^{\bomega_2}(\bX) d\big(A(\bX) - A(\bY)\big)\right\|_\rho \\
       &\ls_{\rho, v, \ell} \|\bX - \bY\|_\rho + \|\bomega_1 - \bomega_2\|
    \end{align*}
    by using the standard Young bound~\Cref{prop:young_2d}, the above bounds, and~\Cref{prop:areaproc_holder_cont}.
\end{proof}

Now, we can use this to prove the main result.

\begin{proof}[Proof of~\Cref{thm:main_sd_holder}]
    From~\Cref{eq:msh_decomp1}, surface development for $\bX \in C^\rho([0,1]^2, \V)$ is defined as a 1D CDE with respect to a linear growth vector field, where the driving signal is $Z_{1;\bullet}(\bX)$, which is $\rho$-H\"older by~\Cref{prop:Z_holder}. The solution is well-defined by~\Cref{prop:young_cde_bound}. Furthermore, the local Lipschitz continuity is given by using the stability of Young differential equations in~\Cref{prop:young_cde_stability}, and the fact that the driving signal $Z^\bomega_{1; \bullet}(\bX)$ is locally Lipschitz with respect to both the surface $\bX$ and the $2$-connection $\bX$.
\end{proof}

\section{Direct Sums and Tensor Products of 2-Connections}

\subsection{Direct Sums and Inclusions} \label{apxsec:direct_sums}
First, we consider direct sums of two matrix 2-connections $\con_1 = (\cona_1, \conb_1, \conc_1) \in \cM^{n_1, m_1, p_1}$ and $\con_2 = (\cona_2, \conb_2, \conc_2) \in \cM^{n_2, m_2, p_2}$. The \emph{direct sum} of these two 2-connections is a 2-connection $\con_1 \op \con_2 \coloneqq \con = (\cona, \conb, \conc)$ valued in $\cmgl^{n_1,m_1,p_1} \oplus \cmgl^{n_2,m_2,p_2}$.
This is viewed as the automorphisms of the 2-vector space\footnote{Note that the $\phi$ defined here does \emph{not} follow the convention from the rest of the paper that it has the form $\phi = \psmat{I & 0 \\ 0 & 0}$.}
\[
    \cV^{n_1,m_1,p_1} \oplus \cV^{n_2, m_2, p_2} \coloneqq \R^{n_1 + m_1} \oplus \R^{n_2 + m_2} \xrightarrow{\phi} \R^{n_1 + p_1} \oplus \R^{n_2 + p_2}, \quad \phi = \pmat{\phi_1 & 0 \\ 0 & \phi_2},
\]
where $\phi_1$ and $\phi_2$ are the linear maps defining the 2-vector spaces $\cV^{n_1,m_1,p_1}$ and $\cV^{n_2, m_2, p_2}$ respectively. 
We define the 2-connection $\con =(\cona, \conb, \conc)$ by
\[
    \cona^i \coloneqq \pmat{\cona^i_1 & 0 \\ 0 & \cona^i_2}, \quad \conb^i \coloneqq \pmat{\conb^i_1 & 0 \\ 0 & \conb^i_2}, \quad \conc^{i,j} \coloneqq \pmat{\conc^{i,j}_1 & 0 \\ 0 & \conc^{i,j}_2}.
\]
Then, given a path $\bx: [0,1] \to \V$, the path development of the direct sum connection is given by
\[
    F^{\cona}(\bx) = \pmat{F^{\cona_1}(\bx) & 0 \\ 0 & F^{\cona_2}(\bx)} \quad \andd \quad F^{\conb}(\bx) = \pmat{F^{\conb_1}(\bx) & 0 \\ 0 & F^{\conb_2}(\bx)}.
\]

\begin{lemma} \label{lem:direct_sum_sh}
    Given a surface $\bX: [0,1]^2 \to \V$, the surface development of the direct sum connection $\con = (\cona,\conb,\conc)$ as defined above is
    \begin{equation} \label{eq:direct_sum_holonomy}
        H^{\con}(\bX) = \pmat{H^{\con_1}(\bX) & 0 \\ 0 & H^{\con_2}(\bX)}.
    \end{equation}
\end{lemma}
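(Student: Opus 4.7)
The proof strategy is to exploit the block-diagonal structure of every ingredient appearing in the surface development equation~\eqref{eq:young_sh} and conclude by uniqueness of solutions to the resulting 1D controlled differential equation. First, I would observe that for the direct sum 2-vector space $\cV^{n_1,m_1,p_1} \oplus \cV^{n_2,m_2,p_2}$, the chain map is the block diagonal $\phi = \mathrm{diag}(\phi_1, \phi_2)$, and that the 2-connection $\con = \con_1 \oplus \con_2$ has all its components $\cona^i, \conb^i, \conc^{i,j}$ block diagonal by construction. Since the path development equations $\frac{dF^\cona_t}{dt} = F^\cona_t \cdot \cona(\dot\bx_t)$ and $\frac{dG^\conb_t}{dt} = G^\conb_t \cdot \conb(\dot\bx_t)$ preserve block diagonal structure (the off-diagonal blocks of $F^\cona$ and $G^\conb$ satisfy homogeneous linear ODEs with zero initial data), we conclude $F^\cona(\bx) = \mathrm{diag}(F^{\cona_1}(\bx), F^{\cona_2}(\bx))$ and $G^\conb(\bx) = \mathrm{diag}(G^{\conb_1}(\bx), G^{\conb_2}(\bx))$ for any tail path through $\bX$.

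Next, I would verify that the integrand $T^\con_{s,t}(\bX) = F^\cona(\bx^{s,t}) \cdot \conc \cdot G^\conb(\bx^{s,t})^{-1}$ defined in~\eqref{eq:T_integrand} is then block diagonal with blocks $T^{\con_k}_{s,t}(\bX)$ for $k = 1,2$. Because the area process $A_{s,t}(\bX)$ depends only on $\bX$ and not on the connection, the 2D Young integral in~\eqref{eq:Z_intermediate_holonomy} defining $Z^\con_{s,t}(\bX)$ commutes with block projection, yielding $Z^\con_{s,t}(\bX) = \mathrm{diag}(Z^{\con_1}_{s,t}(\bX), Z^{\con_2}_{s,t}(\bX))$.

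Finally, consider the ansatz $\wH_{s,t} \coloneqq \mathrm{diag}(H^{\con_1}_{s,t}(\bX), H^{\con_2}_{s,t}(\bX))$. Then
\[
    (I + \wH_{s,t} \phi)\, \partial_t Z^\con_{s,t}(\bX) = \mathrm{diag}\big((I + H^{\con_1}_{s,t}(\bX)\phi_1)\,\partial_t Z^{\con_1}_{s,t}(\bX),\ (I + H^{\con_2}_{s,t}(\bX)\phi_2)\,\partial_t Z^{\con_2}_{s,t}(\bX)\big),
\]
where I used that $\wH_{s,t}\phi$ is block diagonal since both factors are. By~\eqref{eq:young_sh} applied to each summand, the right-hand side equals $\mathrm{diag}(\partial_t H^{\con_1}_{s,t}(\bX), \partial_t H^{\con_2}_{s,t}(\bX)) = \partial_t \wH_{s,t}$, and the initial condition $\wH_{s,0} = 0$ is immediate. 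By uniqueness of solutions to the 1D controlled differential equation in~\Cref{thm:main_sh_pvar} (or directly~\Cref{prop:pexp} in the smooth case), we conclude $H^\con_{s,t}(\bX) = \wH_{s,t}$, which gives~\eqref{eq:direct_sum_holonomy} upon setting $(s,t) = (1,1)$. The whole argument is a routine block-matrix bookkeeping; the only mild subtlety is checking that each step (path development, 2D Young integration, solving the driving ODE) commutes with taking block diagonals, which follows from uniqueness of solutions to the relevant ODEs and the linearity of the Young integral in its integrand.
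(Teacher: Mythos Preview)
Your proposal is correct and follows essentially the same approach as the paper: compute that $Z^{\con}_{s,t}(\bX)$ is block diagonal with blocks $Z^{\con_k}_{s,t}(\bX)$, then verify that the block-diagonal ansatz $\mathrm{diag}(H^{\con_1}_{s,t}(\bX), H^{\con_2}_{s,t}(\bX))$ satisfies the surface development equation and conclude by uniqueness. Your version is somewhat more explicit in justifying the block-diagonal form of the path developments and in invoking uniqueness at the end, whereas the paper simply writes out the integrand as a product of block-diagonal matrices and then states that the ansatz satisfies the equation; the substance is the same.
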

\begin{proof}
    This can be verified by direct computations, as all matrices are diagonal. 
\end{proof}

Next, we consider inclusions $\cmgl^{n_1, m_1,p_1} \hookrightarrow \cmgl^{n_2, m_2, p_2}$ for $(n_1, m_1, p_1) \leq (n_2, m_2, p_2)$, along with the corresponding crossed module of Lie groups $\cmGL^{n_1, m_1, p_1} \hookrightarrow \cmGL^{n_2, m_2, p_2}$. In particular, given
\begin{align} \label{eq:directsum_embed1}
    \pmat{A & 0 \\ B & C}, \pmat{A & D \\ 0 & E} \in \cmgl^{n_1,m_1,p_1}_{1}, \quad \pmat{R & S \\ T & U} \in \cmgl^{n_1,m_1,p_1}_2
\end{align}
we define
\begin{align} \label{eq:directsum_embed2}
    \pmat{A' & 0 \\ B' & C'}, \pmat{A' & D' \\ 0 & E'} \in \cmgl^{n_2,m_2,p_2}_{1}, \quad \pmat{R' & S' \\ T' & U'} \in \cmgl^{n_2,m_2,p_2}_2,
\end{align}
where $A' = \pmat{A & 0 \\ 0 & 0} \in \Mat_{n_2,n_2}$, and similarly for all other blocks. 

\begin{lemma}
    Let $n,m,p \in \N$, $\con \in \cM^{n,m,p}$, and $\ell \in \Mat_{n+p,n+m}$ be a 2-connection valued in $\cmgl^{n,m,p}$. Then, for $N \geq n,m,p$, there exists a matrix 2-connection $\con' \in \cM^N$ and $\ell' \in \Mat_{2N, 2N}$ such that
    \[
        \langle \ell, H^{\con}(\cdot) \rangle = \langle \ell', H^{\con'}(\cdot) \rangle.
    \]
\end{lemma}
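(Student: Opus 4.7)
The plan is to construct both $\con'$ and $\ell'$ as explicit zero-paddings of $\con$ and $\ell$, and then verify that surface development intertwines with this embedding so that the trace pairing is unchanged.

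First, I would define $\con' = (\cona', \conb', \conc') \in \cM^{N,N,N}$ by applying the componentwise inclusion $\cmgl^{n,m,p} \hookrightarrow \cmgl^{N,N,N}$ of~\Cref{eq:directsum_embed1}--\Cref{eq:directsum_embed2} to each $\cona^i$, $\conb^i$, and $\conc^{i,j}$: concretely, zero-pad each constituent block ($A, B, C$ of $\cona^i$; $A, D, E$ of $\conb^i$; $R, S, T, U$ of $\conc^{i,j}$) to an $N \times N$ block. Since this inclusion is a morphism of differential crossed modules (it preserves both brackets and $\cmb$), the semiflat conditions characterizing $\cM^{N,N,N}$ follow immediately from the corresponding conditions on $\con$.

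Next, I would show that surface development intertwines this embedding. After reordering the basis of $V_1 = \R^{2N}$ to $\R^n \op \R^m \op \R^{N-n} \op \R^{N-m}$ and of $V_0 = \R^{2N}$ to $\R^n \op \R^p \op \R^{N-n} \op \R^{N-p}$, the embedded connection components become block-diagonal with $(\cona, \conb, \conc)$ on the top-left and zeros on the bottom-right, while $\phi_N$ decomposes as $\phi_{n,m,p} \op \phi_{N-n, N-m, N-p}$ between the two reorderings. By uniqueness of ODE solutions, the path developments take the block-diagonal form $F^{\cona'}(\bx) = \bigl(\begin{smallmatrix}F^\cona(\bx) & 0 \\ 0 & I\end{smallmatrix}\bigr)$ and $G^{\conb'}(\bx) = \bigl(\begin{smallmatrix}G^\conb(\bx) & 0 \\ 0 & I\end{smallmatrix}\bigr)$. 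Using the integrand from~\Cref{eq:T_integrand}, this structure propagates to $T^{\con'}_{s,t}(\obX) = \bigl(\begin{smallmatrix}T^\con_{s,t}(\obX) & 0 \\ 0 & 0\end{smallmatrix}\bigr)$, and integrating against the area process (which depends only on $\obX$) gives $Z^{\con'}(\obX) = \bigl(\begin{smallmatrix}Z^\con(\obX) & 0 \\ 0 & 0\end{smallmatrix}\bigr)$ via~\Cref{eq:Z_intermediate_holonomy}. Substituting into the surface development equation~\Cref{eq:young_sh}, whose right-hand side $(I + H^{\con'}\phi_N)\partial_t Z^{\con'}$ preserves this block-diagonal form, uniqueness then yields $H^{\con'}(\obX) = \bigl(\begin{smallmatrix}H^\con(\obX) & 0 \\ 0 & 0\end{smallmatrix}\bigr)$, i.e., $H^{\con'}$ is exactly the image of $H^\con$ under the zero-padding embedding $\cmGL^{n,m,p}_1 \hookrightarrow \cmGL^{N,N,N}_1$.

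Finally, I would define $\ell' \in \Mat_{2N, 2N}$ to place $\ell$ in the (reordered) block position dual to the image of $H^\con$ and zero in the complementary positions. A direct trace computation then gives
\[
    \langle \ell', H^{\con'}(\,\cdot\,)\rangle = \operatorname{tr}(\ell' H^{\con'}(\,\cdot\,)) = \operatorname{tr}(\ell H^\con(\,\cdot\,)) = \langle \ell, H^\con(\,\cdot\,)\rangle,
\]
as required. The main source of friction is the notational bookkeeping of two different basis reorderings for $V_0$ and $V_1$ (with splittings $n+p$ and $n+m$) and verifying that $\phi_N$ decomposes compatibly between them; once this is set up, the substantive content of each step reduces to uniqueness of ODE solutions applied to the block-diagonal data.
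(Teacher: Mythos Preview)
Your proposal is correct and follows essentially the same approach as the paper: define $\con'$ and $\ell'$ via the blockwise zero-padding embedding of~\Cref{eq:directsum_embed1}--\Cref{eq:directsum_embed2}, observe that $H^{\con'}$ is the image of $H^\con$ under this embedding, and conclude by matching the trace pairing. The paper's proof is terser, simply asserting the block form of $H^{\con'}$, whereas you explicitly justify it by tracking the block-diagonal structure through $T$, $Z$, and the ODE~\Cref{eq:young_sh}; this extra detail is sound and fills in what the paper leaves implicit.
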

\begin{proof}
    We define $\con' \in \cM^N$ and $\ell' \in \Mat_{2N,2N}$ by the embedding $\cmgl^{n, m,p} \hookrightarrow \cmgl^{N,N,N}$ as described in~\Cref{eq:directsum_embed1} and~\Cref{eq:directsum_embed2}. Then, the two surface holonomies will have the form
    \[
        H^\con(\bX) = \pmat{R & S \\ T & U} \quad \andd \quad H^{\con'}(\bX) = \pmat{ R' & S' \\ T' & U'},
    \]
    where each block has the form $R' = \psmat{R & 0 \\ 0 & 0}$. Because $\ell'$ has the same form, we obtain the result.
\end{proof}

\subsection{Tensor Product of Surface Developments} \label{apxsec:2_tensor_product}

We begin by recalling the tensor product for path development. Let $V$ and $W$ be finite dimensional vector spaces and suppose $\cona_V \in \Lin(\V, \fgl(V))$ and $\cona_W \in \Lin(\V, \fgl(W))$ are two 1-connections. The tensor product of these is a connection $\cona \in \Lin(\V, \fgl(V \otimes W))$ defined by
\[
    \cona \coloneqq \cona_V \otimes I + I \otimes \cona_W. 
\]
Let $F^{\cona_V} : \thinpath(\V) \to \GL(V)$ and $F^{\cona_W} : \thinpath(\V) \to \GL(W)$ be the respective path development functors, and the path development of $\cona$ is the tensor product 
\[
    F^\cona  = F^{\cona_V} \otimes F^{\cona_W} :\thinpath(\V) \to \GL(V \otimes W).
\]
Indeed, given some $\bx \in C^\infty([0,1],\V)$, we have
\begin{align*}
    \frac{d (F^{\cona_V}_s(\bx) \otimes F^{\cona_W}_s(\bx))}{ds} &= \frac{dF^{\cona_V}_s(\bx)}{ds} \otimes F^{\cona_W}_s(\bx) + F_s^{\cona_V}(\bx) \otimes \frac{dF^{\cona_W}_s(\bx)}{ds} = (F^{\cona_V}_s(\bx) \otimes F^{\cona_W}_s(\bx)) \cdot \alpha(\bx).
\end{align*}
Thus, the tensor product of path development functors is given by the tensor product of the corresponding linear automorphisms (1-morphisms) of the vector spaces $V$ and $W$. This leads to a convenient algebra structure on linear functionals of path development. Indeed, suppose $\ell_V$ and $\ell_W$ are linear functionals of $\GL(V)$ and $\GL(W)$ respectively. Then,
\[
    \langle \ell_V, F^{\cona_V}(\bx)\rangle \cdot \langle \ell_W, F^{\cona_W}(\bx)\rangle = \langle \ell_V \otimes \ell_W, F^{\cona_V}(\bx) \otimes F^{\cona_W}(\bx) \rangle = \langle \ell_V \otimes \ell_W, F^{\cona}(\bx) \rangle,
\]
provides an algebra structure on the collection of linear functionals of path development on matrix groups. This leads to variants of the characteristic function on path space which does not require the exponential~\cite{chevyrev_characteristic_2016,cuchiero_global_2023}, though it requires additional moment conditions. \medskip

Unfortunately, due to the structure of tensor products of 2-vector spaces, such a property does not extend to surface development. Let $\cV$ and $\cW$ be two 2-vector spaces, and $\con_{\cV}$ and $\con_{\cW}$ be two 2-connections valued in $\cmgl(\cV)$ and $\cmgl(\cW)$ respectively. Let $H^{\con_{\cV}} : \thinsurface(\V) \to \cmGL_1(\cV)$ and $H^{\con_{\cW}} :\thinsurface(\V) \to \cmGL_1(\cW)$ denote their respective surface holonomies. 
First, the tensor product of 2-vector spaces discussed in~\cite{baez_higher-dimensional_algebras_2004} does not correspond to the usual tensor product of chain complexes. It is shown in~\cite{loday_tensor_1998} that there exists a modified tensor product on 2-term chain complexes such that the equivalence between 2-vector spaces and 2-term chain complexes is symmetric monoidal.
In particular, the tensor product of two 2-vector spaces is given by the 2-vector space $\cV \wot \cW = (\cV \wot \cW)_1 \xrightarrow{\phi_{\wot}} (\cV \wot \cW)_0$ where
\begin{align*}
    (\cV \wot \cW)_1 &\coloneqq  V_1 \ot W_0 \oplus V_0 \ot W_1\quad \andd \quad (\cV \wot \cW)_0 \coloneqq V_0 \ot W_0. \label{eq:monoidal_2vect} 
\end{align*}
Here the linear map is given by $\phi_{\wot}  \coloneqq \pmat{\phi_V \ot I & I \ot \phi_W}$.
However, the main issue is that we do not have a component of the form
\[
    H^{\con_\cV}(\bX) \otimes H^{\con_\cW}(\bX) : \cV_0 \otimes \cW_0 \to \cV_1 \otimes \cW_1
\]
component. Indeed, the lack of this term is due to degree issues: since $\cV_1 \otimes \cW_1$ is of total degree 2, the map $H^{\con_\cV}(\bX) \otimes H^{\con_\cW}(\bX)$ is of degree 2, while chain homotopies are degree 1 by definition.

\section{Characterization of Surfaces using Polynomial Integrals} \label{apx:polynomial_integrals}

In this section, we prove the following result. Let $C^{\rho}_{\zax}([0,1]^2, \R^d)$ denote surfaces $\bX$ such that $\bX_{s,0} = \bX_{0,t} = 0$ for all $s,t \in [0,1]$.

\begin{proposition} \label{prop:2d_integral_characterization}
    For $\bX \in C^{\rho}_{\zax}([0,1]^2, \R^d)$ with $|\bX|_{\rho, (1,2)} > 0$, there exist $a,b \in \N$ and $i \in [d]$ such that 
    \[
        \int_{[0,1]^2} s^a t^b d\bX_{s,t}  \neq 0.
    \]
\end{proposition}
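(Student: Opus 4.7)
Proof plan: The argument is by contradiction. Suppose that for every $a, b \geq 1$ and every $i \in [d]$ we have $\int_{[0,1]^2} s^a t^b \, d\bX^i = 0$; by linearity, the integral of every polynomial in the monomials $\{s^a t^b : a,b \geq 1\}$ against $d\bX^i$ then vanishes. By~\Cref{lem:2d_poly_dense_in_1var}, such polynomials are $1$-variation dense in $C^{0,1-\var}_{\zax}([0,1]^2,\R)$. Since $p<2$ gives $1/p + 1/1 > 1$, the $2$D Young inequality (\Cref{prop:2d_young}) makes $F \mapsto \int F \, d\bX^i$ a continuous linear functional on $C^{1-\var}([0,1]^2,\R)$; hence $\int F \, d\bX^i = 0$ for every $F \in C^{0,1-\var}_{\zax}([0,1]^2,\R)$ and every $i \in [d]$.

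The second step is to specialize to the test family
\[
f^{u,v}(s,t) \coloneqq \min(s,u)\min(t,v), \qquad (u,v) \in [0,1]^2,
\]
each of which lies in $C^{0,1-\var}_{\zax}$ because $f^{u,v} = \hat{\iota}(\mathbbm{1}_{[0,u]\times[0,v]})$ with $\mathbbm{1}_{[0,u]\times[0,v]} \in L^1$. For a smooth surface $\bY$ with $\bY_{s,0} = \bY_{0,t} = 0$, the identity $\int f^{u,v} d\bY = \int f^{u,v} \partial_s\partial_t \bY \, ds\,dt$ combined with two integrations-by-parts (using $\partial_s \min(s,u) = \mathbbm{1}_{[0,u]}(s)$ distributionally) gives
\[
J(u,v;\bY) \coloneqq \int f^{u,v} d\bY = uv\,\bY_{1,1} - u\!\int_0^v\!\bY_{1,t}\,dt - v\!\int_0^u\!\bY_{s,1}\,ds + \int_0^u\!\int_0^v\!\bY_{s,t}\,dt\,ds.
\]
For general $\bX \in C^{p-\var}_{\zax}$, pick $p' \in (p,2)$ and invoke~\Cref{prop:pvar_closure_inclusion} to obtain smooth approximants $\bX^n \xrightarrow{p'-\cvar} \bX$; after subtracting the rank-one correction $\bX^n_{s,0} + \bX^n_{0,t} - \bX^n_{0,0}$ (whose $2$D increments vanish and whose axis variations tend to $0$) we may further assume $\bX^n \in C_\zax$. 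The left-hand side passes to the limit by continuity of $2$D Young integration, while uniform convergence (controlled $p'$-variation dominates the sup norm) handles each summand on the right, so the formula extends to every $\bX \in C^{p-\var}_{\zax}$.

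Finally, the contradiction hypothesis forces $J(u,v;\bX^i) \equiv 0$ for every $i$. Taking the $2$D increment of $J(\cdot,\cdot;\bX^i)$ over $[u,u']\times[v,v']$ in the explicit formula, dividing by $(u'-u)(v'-v)$, and letting $u' \downarrow u$, $v' \downarrow v$ (using continuity of $\bX^i$) yields
\[
\bX^i_{u,v} - \bX^i_{u,1} - \bX^i_{1,v} + \bX^i_{1,1} = 0 \qquad \text{for every } (u,v) \in [0,1]^2.
\]
Setting $u = 0$ together with $\bX^i_{0,v} = \bX^i_{0,1} = 0$ gives $\bX^i_{1,v} = \bX^i_{1,1}$; combining with $\bX^i_{1,0} = 0$ forces $\bX^i_{1,1} = 0$, hence $\bX^i_{1,v} = 0$, and symmetrically $\bX^i_{u,1} = 0$. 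Substituting back produces $\bX^i \equiv 0$ for every $i$, so $\bX = 0$, contradicting $|\bX|_p > 0$.

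The main technical hurdle is the passage from smooth to bounded $p$-variation surfaces in the integration-by-parts formula, which is handled by the approximation step above; once that formula is established, the extraction of $\bX \equiv 0$ from the vanishing of the increments $\square_{u,1;v,1}[\bX^i]$ is purely algebraic.
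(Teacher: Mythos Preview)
Your proof is correct and takes a genuinely different route from the paper. Both arguments begin identically: by \Cref{lem:2d_poly_dense_in_1var} and the 2D Young estimate, it suffices to exhibit some $F\in C^{0,1-\var}_{\zax}([0,1]^2,\R)$ with $\int F\,d\bX^i\neq 0$. From there the paper works constructively: since $|\bX|_p>0$ there is a rectangle $R=[u,u']\times[v,v']\subset(0,1)^2$ with $\square_R[\bX]\neq 0$, and a piecewise-linear bump $B_\delta$ (the product of two tent functions supported on a $\delta$-thickening of $R$) is shown to satisfy $\int B_\delta\,d\bX = \square_R[\bX] + O\big(\sum_i |\bX|_{p;\text{collar}_i}\big)$, so for $\delta$ small the integral is nonzero. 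Your route instead tests against $f^{u,v}(s,t)=\min(s,u)\min(t,v)=\hat{\iota}(\mathbbm{1}_{[0,u]\times[0,v]})$, establishes the integration-by-parts identity $J(u,v;\bX^i)=uv\,\bX^i_{1,1}-u\int_0^v\bX^i_{1,t}\,dt-v\int_0^u\bX^i_{s,1}\,ds+\int_0^u\int_0^v\bX^i_{s,t}\,dt\,ds$ by smooth approximation of the integrator, and then differentiates $J\equiv 0$ in $(u,v)$ to recover $\square_{u,1;v,1}[\bX^i]=0$ everywhere, forcing $\bX=0$. The paper's approach is more self-contained---it needs only explicit Young estimates on the collar regions and avoids approximating $\bX$---while yours trades the bump-function calculus for a simpler test family and a clean algebraic endgame. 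One minor point: to invoke \Cref{prop:pvar_closure_inclusion} you first need $\bX\in C^{p''-\cvar}$ for some $p''<2$, which follows from the hypothesis $\bX\in C^{p-\var}$ via \Cref{thm:higher_pvar}; after that the approximation and rank-one correction go through exactly as you describe.
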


We begin with a preliminary lemma.

\begin{lemma} \label{lem:2d_poly_dense_in_1hol}
    Polynomials in $s^a t^b$ with $a, b \geq 1$ are dense in $C^{0,1}_{\zax}([0,1]^2, \R)$.
\end{lemma}
\begin{proof}
    Let $F \in C^{0,1}_{\zax}([0,1]^2, \R)$, so that $F$ can be expressed as $F_{s,t} = \int_0^t \int_0^s f_{s,t} ds dt$ for some continuous $f: [0,1]^2 \to \R$ (see~\cite[Proposition 1.40]{friz_multidimensional_2010} for the 1D statement). In particular, note that $|F|_{1, (1,2)} = |f|_\infty$. Then, since polynomials are dense in $C([0,1]^2, \R)$ with the uniform topology, there exists a sequence $p_n$ of polynomials such that $p_n \xrightarrow{\infty} f$. Then, define
    \[
        P_n(s,t) = \int_0^t \int_0^s p_n(s',t') ds' dt',
    \]
    which is a polynomial in $s^a t^b$ with $a, b \ge 1$, and we have $|P_n - F|_{1,(1,2)} \to 0$.
\end{proof}

\begin{proof}[Proof of~\Cref{prop:2d_integral_characterization}]
    Without loss of generality, we consider $\bX \in C^\rho([0,1]^2, \R)$. Furthermore, by~\Cref{lem:2d_poly_dense_in_1hol}, it suffices to show that there exists some $B \in C^{0,1}_{\zax}([0,1]^2, \V)$ such that $\int_{[0,1]^2} B_{s,t} \, d\bX_{s,t} \neq 0$.
    To build the $B$, we note that there exists a rectangle $R = [s,s'] \times [t,t']$ such that $\square_R[\bX] \neq 0$. Then, for any $\delta > 0$, let $B^\delta: [0,1]^2 \to \R$ be a smooth bump function which is equal to $1$ on $R$, and supported on $R^\delta = [s-\delta, s'+\delta] \times [t-\delta, t'+\delta]$. Then, note that
    \begin{align}
        \int_{[0,1]^2} B^\delta_{s,t} d\bX_{s,t} = \int_{R} d\bX_{s,t} + \int_{R^\delta - R} B^\delta_{s,t} d\bX_{s,t} = \square_R[\bX] + \int_{R^\delta - R} B^\delta_{s,t} d\bX_{s,t}.
    \end{align}
    Because $|B^\delta|_\infty \leq 1$, we note that $\left|\int_{R^\delta - R} B^\delta_{s,t} d\bX_{s,t}\right| \to 0$ as $\delta \to 0$. Thus, choosing $\delta$ sufficiently small, we obtain $\left|\int_{[0,1]^2} B^\delta_{s,t} d\bX_{s,t}\right| >0$.
\end{proof}

\bibliographystyle{plain}
\bibliography{matrix_holonomy}

\end{document}